\documentclass[11pt]{amsart}
\usepackage{a4wide}

\usepackage{comment}
\usepackage[alphabetic, initials]{amsrefs}
\usepackage[T1]{fontenc}
\usepackage{amssymb}

\usepackage{enumitem}

\usepackage{tikz}
\usetikzlibrary{decorations.markings, arrows, calc, patterns, arrows.meta, 
bending}

\usetikzlibrary{external}

\usepackage{float}

\usepackage{bbm}
\usepackage{dsfont}
\usepackage{amsbsy}

\usepackage{thmtools}
\usepackage{thm-restate}

\newtheorem{thm}{Theorem}[section]
\newtheorem{cor}[thm]{Corollary}
\newtheorem{lem}[thm]{Lemma}
\newtheorem{prop}[thm]{Proposition}

\theoremstyle{definition}
\newtheorem{defin}[thm]{Definition}

\newtheorem{rem}[thm]{Remark}

\numberwithin{figure}{section}

\newcommand{\lle}{\langle}
\newcommand{\rre}{\rangle}
\newcommand{\lf}{\lfloor}
\newcommand{\rf}{\rfloor}
\newcommand{\eps}{\varepsilon}

\DeclareMathOperator{\Tr}{Tr}

\BibSpec{collection.article}{
    +{}  {\PrintAuthors}                {author}
    +{,} { \textit}                     {title}
    +{.} { }                            {part}
    +{:} { \textit}                     {subtitle}
    +{,} { \PrintContributions}         {contribution}
    +{,} { \PrintConference}            {conference}
    +{}  {\PrintBook}                   {book}
    +{,} { }                            {booktitle}
    +{,} { }                            {publisher}
    +{,} { \PrintDateB}                 {date}
    +{,} { pp.~}                        {pages}
    +{,} { }                            {status}
    +{,} { \PrintDOI}                   {doi}
    +{,} { available at \eprint}        {eprint}
    +{}  { \parenthesize}               {language}
    +{}  { \PrintTranslation}           {translation}
    +{;} { \PrintReprint}               {reprint}
    +{.} { }                            {note}
    +{.} {}                             {transition}
    +{}  {\SentenceSpace \PrintReviews} {review}
}

\title[Random groups in the square model have property $(\textrm{T})$ at densities over $\frac{5}{12}$]{Random groups in the square model \\have property $(\textrm{T})$ at densities over $\frac{5}{12}$}

\author{Damian Orlef}
\address{Institute of~Mathematics of the Polish Academy of~Sciences, ul. \'{S}niadeckich 8, 00-656 Warsaw, Poland}
\email{dorlef@impan.pl}

\author{Tomasz Odrzyg\'{o}\'{z}d\'{z}}
\address{Institute of~Mathematics of the Polish Academy of~Sciences, ul. \'{S}niadeckich 8, 00-656 Warsaw, Poland}
\email{tomeko314@gmail.com}

\begin{document}

\begin{abstract}
A~random group in the square model $\mathcal{Q}(n,d)$ is~given by~a~presentation with~$n$ generators and a~uniformly random set of~$(2n-1)^{4d}$ cyclically reduced relators of~length 4 over them. We~prove that if $d>\frac{5}{12}$, then a random group in $\mathcal{Q}(n,d)$ has Kazhdan's property~$(\textrm{T})$ asymptotically almost surely (a.a.s.). This provides a~new construction of~infinite word-hyperbolic Kazhdan groups when $d\in\left(\frac{5}{12}, \frac{1}{2}\right)$. Moreover, also for $d>\frac{5}{12}$, we show that a~random group in the square model a.a.s.\ has property $(\textrm{F}L^p)$ for an increasing range of values of $p$, and property $(\textrm{F}_X)$
for any fixed uniformly curved Banach space $X$. These results lead to bounds
on the~conformal dimension of the~boundary of a~random group in the square model.

In~the~process we devise a~method of~applying the~spectral link
criteria for~property $(\textrm{T})$, and other fixed point properties, to~the~groups presented by~relations of~length~4. 
In order to use it for the square model, we verify the~required spectral gap hypothesis 
by~the~trace method, in~the~spirit of~the work of Broder--Shamir. 
\end{abstract}

\maketitle

\section{Introduction}

In this article we investigate a~model of random groups, called the~\emph{square model}, and show that it provides (for certain values of 
the~\emph{density} parameter $d$) a~new construction of infinite groups with Kazhdan's property $(\mathrm{T})$, which additionally satisfy various fixed point 
properties for affine isometric actions on Banach spaces. All~groups $\Gamma$ we consider are countable and discrete,
and for such groups we use the~following definitions. Given a~Banach space $X$ (real or complex), we say
that $\Gamma$ has property $(\mathrm{F}_X)$ if every affine 
isometric action of $\Gamma$ on $X$ has a~global fixed point. For $p\geq 1$, we say that $\Gamma$ has property $(\mathrm{F}L^p)$, if it has 
property $(\mathrm{F}_X)$ for every real $L^p$--space $X$. Notably, for any~countable discrete group and any $p\in[1,2]$, 
property $(\textrm{F}L^p)$ is equivalent to property $(\mathrm{T})$ 
(by a~combination of~\cite[Theorems 19.65 and 19.66]{dru18} and \cite[Theorem 2.12.4]{bhv08}).

The~square model of~random groups was first introduced in~\cite{odr16a} and is defined as follows.
Let $d\in(0,1)$ be a~number, called \emph{density}, and let $n\in\mathbb{N}$. 
A~\emph{random group in the square model $\mathcal{Q}(n,d)$} is a~group $\Gamma$, given by (and considered with)
the~presentation $\Gamma=\lle S|R\rre$, where $S$ is a~fixed set of~$n$ (free) generators and $R$ 
is a~uniformly random set of $\left\lfloor (2n-1)^{4d}\right\rfloor$ cyclically reduced words of~length 4
over $S\cup S^{-1}$, i.e.\ words of form $pqrs$, where $p, q, r, s \in S\cup S^{-1}$, $p\neq q^{-1}, q\neq r^{-1}, r\neq s^{-1}$ and 
$s\neq p^{-1}$. Alternatively, we call $\Gamma$ a~\emph{random group in the square model at density $d$ (with $n$ generators)}.

Of usual interest is the case when $d$ is fixed and $n\rightarrow\infty$. Given a~property~$\mathcal{P}$ of groups or~presentations (possibly depending on $n$), we say that a~random group in the square model at density~$d$ (with $n$ generators) \emph{satisfies $\mathcal{P}$ asymptotically almost surely (a.a.s.)} if 

\begin{equation*}
	\lim_{n\rightarrow\infty}
	\mathbb{P}\big(\Gamma=\lle S| R\rre \text{ in }\mathcal{Q}(n,d)\text{ satisfies }\mathcal{P}\big)=1.
\end{equation*}
(More generally, we use the~phrase \emph{asymptotically almost surely} to indicate any situation, in which the~probability of an~event
converges to 1 as $n\rightarrow\infty$.)

It was shown in \cite{odr16a} that the~type of groups obtained in the model varies with the~density as follows. A~random group in the~square model at density $d$ is a.a.s.
\begin{enumerate}[label=(\roman*)]
	\item\label{random_square_p1} non-trivial free if $d<\frac{1}{4}$,
	\item\label{random_square_p2} infinite, word-hyperbolic, torsion-free and of~geometric dimension 2 if ${d\in\left(\frac{1}{4}, \frac{1}{2}\right)}$, 
	\item\label{random_square_p3} isomorphic to $\mathbb{Z}/2\mathbb{Z}$ if $d>\frac{1}{2}$.
\end{enumerate}
Parts~\ref{random_square_p1} and \ref{random_square_p3} are, respectively, \cite[Theorem~4.8]{odr16a} and \cite[Theorem~2.8]{odr16a}.
Part~\ref{random_square_p2} expands on~\cite[Corollary~3.16]{odr16a} by the~following arguments.
The~isoperimetric inequality \cite[Theorem~3.14]{odr16a} implies that, for $d\in\left(\frac{1}{4},\frac{1}{2}\right)$, a~random $\Gamma=\lle S|R\rre$ in $\mathcal{Q}(n,d)$ a.a.s.\ has 
an~aspherical presentation complex and so it is torsion-free, of geometric dimension at most~2. A.a.s.\ its Euler characteristic equals to $1-|S|+|R|=1-n+\lfloor (2n-1)^{4d}\rfloor\rightarrow \infty$, so~in~particular $\Gamma$ is not quasi-isometric to any~free group, including $\mathbb{Z}$, and is hence non-elementary hyperbolic, of geometric dimension exactly 2.

The~square model is the~case of $k=4$ for a~more general construction, which we call the~\emph{$k$-gonal model}, where the~set of generators $S$ has $n$ elements and the~set of relators $R$ is a~random set of $\lfloor (2n-1)^{kd}\rfloor$ cyclically reduced words of fixed length $k$ over $S\cup S^{-1}$. For $k=3$, this is the~\emph{triangular model}, introduced
in \cite{zuk03}. The~case of general $k$ was first studied in \cite{ash20} under the name \emph{standard $k$-angular model}. For each $k$, the $k$-gonal model
can be seen as a~simplification of (or a~different perspective on) the~\emph{Gromov density model}, originating in \cite{gro93}.

Famously, for densities $d\in(\frac{1}{3}, \frac{1}{2})$, random groups in the~triangular model and in the~Gromov density model are a.a.s.\ infinite and have property $(\textrm{T})$
(cf. \cite{zuk03}, \cite{kot13}, \cite{ash23} for the~discussion of~the~proof and \cite{als15} for a~more detailed analysis of the~case $d=\frac{1}{3}$ in~the~triangular model). 
This fact is established by means of a~spectral criterion attributed to \.{Z}uk \cite{zuk03} and Ballmann--\'Swi\k{a}tkowski \cite{bal97}, tracing back to the~ideas of~Garland \cite{gar73}. By~generalising this method, additional properties of form $(\textrm{F}_X)$, for Banach spaces~$X$, (including properties $(\textrm{F}L^p)$) have also been established for random groups in the~triangular model and in the~Gromov density model --- see \cite{now15}, \cite{dru19}, \cite{dLdlS21}, \cite{opp23a}, \cite{opp23b}.

Infinite groups with property $(\textrm{T})$ have been also shown to occur, for various ranges of $d$, in~the~$k$-gonal model for $k=6$ \cite{odr25} and $k\geq 8$ \cite{ash23} (see also \cite{mon22}).

Regarding~the~case of $k=4$, it was proven in \cite{odr25} that a~random group in the square model at density $d<\frac{3}{8}$ a.a.s.\ does not have property $(\mathrm{T})$. Here, for the first time, we show that there exist densities $d<\frac{1}{2}$, such that a~random group in the square model at density $d$ a.a.s.\ has property $(\mathrm{T})$. Our main results are as follows.
\begin{thm}\label{main_thm_T}
	If $d>\frac{5}{12}$, then a~random group in the~square model at density $d$ has property~$(\mathrm{T})$ a.a.s.
\end{thm}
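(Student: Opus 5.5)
The plan is to use the spectral (Garland--\.{Z}uk--Ballmann--\'Swi\k{a}tkowski) criterion, adapted to relators of length~4. Square relators do not give a triangular presentation, so the first task is to build a group that is commensurable with $\Gamma$, or a quotient-compatible cover of it, and that acts on a simplicial 2-complex with triangular faces, where the link condition can be checked. Concretely, I would pass to the finite-index subgroup of even-length words $\Gamma^{ev}$. In this subgroup each square relator $pqrs$ splits as a product of two length-2 words. Taking the length-2 words $pq$ with $p\neq q^{-1}$ as new generators, each relator $pqrs$ becomes a relation of the form $(pq)(rs)=1$, or $(pq)=(sr)^{-1}$ read cyclically.

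To obtain triangles I would instead use the following criterion. If $\Gamma$ acts properly and cocompactly on a simply connected 2-complex $Y$ whose vertex links $L$ are connected graphs with $\lambda_1(L)>\frac{1}{2}$ (normalized Laplacian), then $\Gamma$ has~$(\mathrm{T})$. For the square model, the natural link is a graph $L$ on vertex set $S\cup S^{-1}$ (or on pairs) built from the relators. Each relator $pqrs$ contributes the edges $\{p^{-1},q\},\{q^{-1},r\},\{r^{-1},s\},\{s^{-1},p\}$. Applying the criterion to the Cayley complex, however, requires a link gap larger than $1/2$, and a random graph can only achieve a gap close to~1. For the square complex the condition should be replaced by a version suited to CAT(0) square complexes, since Garland's method on square complexes is weaker.

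So I would subdivide. Each square is cut along a diagonal into two triangles, which introduces new generators $pq$ (the diagonals). The link of an original vertex then becomes a graph whose vertices are the elements of $S^{\pm}$ together with the diagonal directions. The link at a diagonal-midpoint-type vertex is a bipartite-type graph between $S^{\pm}$ and $S^{\pm}$, with edges given by the relators through that diagonal. I would arrange the complex so that every link is a random graph of the kind: $N\approx n^2$ vertices (the diagonals) with $|R|\approx n^{4d}$ edges. Its average degree is then $n^{4d-2}$, and this must tend to infinity, which needs $d>\frac12$. That is too weak, so the refinement must be smarter. I would take a link with vertex set $S^{\pm}\times S^{\pm}$ restricted to pairs appearing in relators, together with a random multigraph structure. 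The relevant degree count then becomes $n^{12d-4}/n^{\ldots}$, and the threshold $12d>5$ suggests the graph has about $n^{8d-...}$ edges on $n^{...}$ vertices. I expect the exponent bookkeeping to force exactly $d>\frac{5}{12}$ via a second-moment path count: the number of length-3 relator chains, $n^{12d}$, must beat the $n^{5}$ free parameters.

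The key steps are, in order. First, construct the triangulated complex and identify its links as explicit random (multi)graphs determined by $R$. Second, reduce to showing that these graphs a.a.s.\ have normalized spectral gap $\lambda_1>\frac12$, in fact $1-o(1)$. Third, prove that gap by the trace method, following Broder--Shamir. I would compute $\mathbb{E}\,\Tr\big((A-\text{mean})^{2k}\big)$ for $k\sim\log n$ by counting closed non-backtracking walks, whose edges are relators. The count splits into walks that use each relator at least twice, which are tree-like and give the Ramanujan-type main term, and walks with coincidences among the four letters of the relators. The main obstacle is this coincidence term: relators sharing letters create correlated edges in the link graph, and bounding their contribution by the count of $n^{12d-5}\to\infty$ configurations is where the threshold $\frac{5}{12}$ enters. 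I would first attempt to control it by conditioning on the a.a.s.\ bounded number of such high-multiplicity configurations and handling them as a low-rank perturbation.
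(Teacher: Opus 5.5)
There is a genuine gap at the first and decisive step. You never produce a triangular presentation, or a simplicial complex, to which the spectral criterion can be applied, and the threshold $\frac{5}{12}$ is fitted to the answer rather than derived: your exponents are left as ``$n^{8d-\dots}$'' placeholders. The constructions you do consider all fail:
\begin{itemize}
\item Writing $pqrs$ as $(pq)(rs)=1$ gives relations of length $2$ over $W_2(S)$, not triangles.
\item The link criterion ``$\lambda_1>\frac12$ implies (T)'' holds for triangle complexes but is false for square complexes. For example, $F_m\times F_m$ acts on a product of trees with links $K_{2m,2m}$, whose normalised Laplacian has $\lambda_1=1$, yet the group does not have (T).
\item Cutting squares along diagonals gives, by your own count, links of average degree about $n^{4d-2}\to 0$.
\end{itemize}

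The missing idea is to glue \emph{three} relators around an interior vertex. Suppose $x_1x_2x_3x_4,\ y_1y_2y_3y_4,\ z_1z_2z_3z_4\in R$ satisfy $x_4=y_1^{-1}$, $y_4=z_1^{-1}$, $z_4=x_1^{-1}$, together with $x_3\neq y_2^{-1}$, $y_3\neq z_2^{-1}$, $z_3\neq x_2^{-1}$. Then $(y_3z_2)(z_3x_2)(x_3y_2)$ is trivial in $\Gamma$ and is a cyclically reduced word of length $3$ over $W_2(S)$.

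Choose $W_2(S)=U\sqcup U^{-1}$. The group $\widetilde\Gamma=\langle U\mid\text{these words}\rangle$ has a triangular presentation and maps onto a subgroup of index at most $2$ in $\Gamma$. So (T) for $\widetilde\Gamma$ suffices. Its link is a graph on $W_2(S)$, which has about $(2n)^2$ vertices, with three edges per such triple. Such a triple has nine free letters, and each length-$4$ word is a relator with probability $\rho\approx(2n)^{4d-4}$. Hence there are about $(2n)^9\rho^3=(2n)^{12d-3}$ such triples, and the link has degrees of order $(2n)^{12d-5}$. This is the actual source of $\frac{5}{12}$. Your comparison of $n^{12d}$ with $n^5$ points at exactly this count, but without the construction it carries no argument.

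Your spectral step, as described, would not go through either.
\begin{itemize}
\item \textbf{Dependence.} The edges of this link are strongly dependent, since each relator lies in about $(2n)^5$ triples. The paper decouples them by splitting $R=R_1\cup R_2\cup R_3$ into independent copies. It uses only $\mathcal T=(R_1\times R_2\times R_3)\cap T(S)$ and treats the three parts $L_1,L_2,L_3$ of the link separately.
\item \textbf{Degree concentration.} The paper first proves that all degrees concentrate around their essentially common mean. It uses the Kim--Vu inequality, since degrees are cubic polynomials in the relator indicators. This controls the random normalisation of the Markov operator, a point your plan omits.
\item \textbf{Trace bound.} The paper bounds $\mathbb{E}\,\Tr(A^{2k})$ for a \emph{fixed} $k=\lfloor 4/\alpha\rfloor+1$, where $\alpha=12d-5$. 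It counts triagram cycles up to inverse-preserving permutations of $S\cup S^{-1}$. The leading term comes from generic cycles with $14k$ distinct letters and matches the contribution $1$ of the trivial eigenvalue. Every non-generic cycle, tree-like ones included, loses a factor $n^{-\alpha/3}$.
\end{itemize}
Your outline reverses these roles. Tree-like walks are an error term here, not the main term. The growth $n^{12d-5}\to\infty$ is the degree growth that makes the method work, not a count of harmful coincidences. The proposed low-rank treatment of ``high-multiplicity configurations'' has no supporting estimate. Neither $k\sim\log n$ nor a Ramanujan-type bound is needed.
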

\begin{thm}\label{main_thm_FLp}
	For every $d>\frac{5}{12}$, there exists a~constant $f_d>0$, such that a~random group in the~square model
	at density $d$, with $n$ generators
	a.a.s.\ has property $(\mathrm{F}L^p)$ for every 
	${p\in \left[1, f_d\left(\log{n}\right)^{\frac{1}{2}}\right]}$.
\end{thm}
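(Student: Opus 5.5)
We prove Theorem~\ref{main_thm_FLp} with the same scheme as Theorem~\ref{main_thm_T}, replacing the Hilbert-space spectral link criterion by its $L^p$ analogue. We use the $L^p$ version of Garland's method from \cite{now15} and \cite{dru19}, in the form of \cite{dLdlS21}. That form says the following. Let $\Gamma$ act properly and cocompactly on a simplicial $2$-complex whose vertex links are connected. Suppose each link has normalised Laplacian spectral gap $\lambda_1 > 1-\varepsilon$ with $\varepsilon$ small. Then $\Gamma$ has $(\mathrm{F}L^p)$ for every $p\le C\varepsilon^{-1/2}$, with $C$ absolute.

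Since relators have length $4$, the presentation complex has square cells and the criterion does not apply to it directly. We therefore pass to the same auxiliary simplicial complex that is built for Theorem~\ref{main_thm_T}. There we subdivide each square so that the resulting complex is triangular, with a link graph $L$ whose vertex set is (essentially) $S\cup S^{-1}$ together with added vertices, and whose edges come from the relators. On this complex the only link that matters is a random-like graph determined by $R$. The plan has three steps.

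First, compute the average degree of $L$. There are about $(2n-1)^{4d}$ relators, so each generator has degree of order $D\asymp n^{4d-1}$ (the exact exponent depends on the subdivision; I expect $n^{4d-1}$ or $n^{2d}$-type quantities). Second, prove via the trace method, as in Broder--Shamir, that a.a.s.\ every nontrivial eigenvalue $\mu$ of the normalised adjacency operator satisfies $|\mu|\le c\,D^{-1/2}$ up to polylogarithmic factors, or at least $|\mu|\le n^{-\delta(d)}$ for some $\delta(d)>0$ whenever $d>\tfrac{5}{12}$. This is the same spectral estimate that Theorem~\ref{main_thm_T} needs, but now quantitatively: Theorem~\ref{main_thm_T} only requires $\lambda_1>\tfrac12$, while here we need $\varepsilon\to 0$.

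Third, plug in. If $\varepsilon\le n^{-\delta}$, the criterion gives $p\lesssim n^{\delta/2}$, which is much more than $(\log n)^{1/2}$. The stated range $f_d(\log n)^{1/2}$ indicates that the trace-method bound is weaker: moment computations of order $k\sim\log n$ give only $\varepsilon\lesssim C_d/\log n$. The criterion then yields $p\le C(\log n/C_d)^{1/2}=f_d(\log n)^{1/2}$, so we set $f_d=C/\sqrt{C_d}$. Because property $(\mathrm{F}L^p)$ for $p\in[1,2]$ already follows from $(\mathrm{T})$, it suffices to treat $p\ge2$, which is the range the $L^p$ criterion covers.

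The main obstacle is making the spectral estimate quantitative with $\varepsilon\to0$, rather than merely below a fixed threshold. The gap required in the $L^p$ criterion degrades in $p$ like $1-\lambda_1\gtrsim 1-O(p^{-2})$. So we must bound $\mathbb{E}\operatorname{Tr}(A^{2k})$ for $k$ growing like $\log n$, controlling the non-backtracking walk counts uniformly in $k$. We also need the dependencies between relators sharing letters to contribute only lower-order terms when $d>\tfrac5{12}$. I would first try the Broder--Shamir style count of closed walks with a union bound over "tangled" configurations. I would then accept a loss that makes $\varepsilon$ decay only like $1/\log n$, which is exactly what produces the $(\log n)^{1/2}$ range.
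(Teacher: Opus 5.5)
There is a genuine gap, and it lies exactly where you explain the exponent $\frac12$. The criterion you attribute to \cite{dLdlS21}, giving $(\mathrm{F}L^p)$ for all $p\le C\varepsilon^{-1/2}$, is not what that paper provides, and it cannot be true. The paper shows that the relevant link is a two-sided $n^{-e_d}$-spectral expander. Your criterion would then give $(\mathrm{F}L^p)$ for $p\approx n^{e_d/2}$. But for $d\in(\frac5{12},\frac12)$, Theorem~\ref{conformal_FLp_lower_bound_thm} together with the a.a.s.\ bound $\operatorname{Confdim}(\partial\Gamma)\le\frac{200}{1-2d}\log n$ rules out $(\mathrm{F}L^p)$ for $p$ much larger than $\log n$. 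The actual input is Theorem~\ref{crit_unif_curved}, from \cite[Theorem~A, Remark~5.2(i)]{dLdlS21}. It needs a \emph{two-sided} $\varepsilon$-expander with $\varepsilon<2p^{-p/2}2^{-p^2/2}$, i.e.\ $\log(1/\varepsilon)\gtrsim p^2$. So $(\log n)^{1/2}$ comes from a \emph{polynomially} small $\varepsilon=n^{-e_d}$ via $e_d\log n\ge Cp^2$, not from a weak spectral estimate. Your fallback $\varepsilon\asymp1/\log n$ would give only $p\lesssim(\log\log n)^{1/2}$, and the theorem would not follow. What you need is your stronger alternative $|\mu|\le n^{-\delta(d)}$, and it does not require $k\sim\log n$. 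The paper takes the fixed $k=\lfloor4/\alpha\rfloor+1$ and proves $\mathbb{E}(\Tr(A^{2k})\mathds{1}_{\mathcal{R}})\le1+O(n^{-\alpha/3})$ by counting $\Theta$-orbits of triagram cycles. The $2^{2k}$ generic orbits give the main term, and non-generic ones lose a factor $(2n)^{-\alpha/3}$. It then concludes by Markov's inequality, using Kim--Vu degree concentration to control the normalisation of the Markov operator.

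The second gap is the complex, which you borrow from ``the proof of Theorem~\ref{main_thm_T}''. In the paper that theorem is a corollary of this one, so the construction must be supplied here, and it is the source of the threshold $\frac5{12}$. Merely subdividing squares does not help: a fixed length-$2$ word lies in only about $(2n)^{4d-2}\to0$ random relators, so diagonals are not shared by many random triangles. Instead, three relators $x_1x_2x_3y_1^{-1}$, $y_1y_2y_3z_1^{-1}$, $z_1z_2z_3x_1^{-1}$, drawn from independent copies $R_1,R_2,R_3$, are glued into a triagram. Its boundary $(y_3z_2)(z_3x_2)(x_3y_2)$ is a triangular relator over $W_2(S)=U\sqcup U^{-1}$. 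The group $\langle U\mid\widetilde\partial(\mathcal{T})\rangle$ maps onto a subgroup of index at most $2$ in $\Gamma$, so $(\mathrm{F}_X)$ transfers. Its link has vertex set $W_2(S)$, not $S\cup S^{-1}$. Its degrees concentrate near $2(2n)^7p^3\sim\frac{2}{27}(2n)^{12d-5}$, which tends to infinity exactly when $d>\frac5{12}$. You also need the transfer from the binomial model, via monotonicity of $(\mathrm{F}L^p)$ under quotients plus \cite[Corollary~1.16(i)]{jan00}, and the trivial case $d>\frac12$.
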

\begin{thm}\label{main_thm_F_X}
	Suppose $X$ is a~uniformly curved Banach space and $d>\frac{5}{12}$. Then a~random group in the~square model
	at density $d$ has property $\left(\mathrm{F}_X\right)$ a.a.s.
\end{thm}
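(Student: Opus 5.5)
We will derive this from a Banach-space version of the spectral link criterion, following the scheme used for Theorems \ref{main_thm_T} and \ref{main_thm_FLp}. The relevant criterion is the one of Oppenheim, and of de Laat--de la Salle, for uniformly curved spaces. Fix $X$ uniformly curved. For a group acting geometrically on a suitable $2$-dimensional complex, it gives $\varepsilon_X>0$, depending only on $X$, such that the following holds: if every vertex link is a finite graph whose normalized random walk operator satisfies $\lambda_2<\varepsilon_X$ (and $\lambda_{\min}$ is bounded away from $-1$), then the group has $(\mathrm{F}_X)$. The point is that $X$ is fixed while $n\to\infty$. So it suffices to show that a.a.s.\ the relevant link graphs have spectral gap tending to the maximal one, i.e.\ nontrivial eigenvalues $o(1)$.

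The first step is to build from $\Gamma=\langle S|R\rangle$ a complex or group on which the criterion applies. Relators have length $4$, so the presentation complex has square $2$-cells and its links carry no triangle structure. I would reuse the construction developed for Theorem \ref{main_thm_T}: pass to a finite-index subgroup, or a triangulated or ``diagonal'' complex, in which each square $pqrs$ contributes edges, for instance between $p$ and $s^{-1}$ and between $q$ and $r^{-1}$, or pairs of consecutive letters, giving triangles. The resulting link graph $L$ has vertex set roughly $S\cup S^{-1}$ or pairs of generators. Passing to finite-index subgroups preserves $(\mathrm{F}_X)$ in both directions for reflexive $X$, and uniformly curved spaces are superreflexive, so this reduction is legitimate.

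The second step is the spectral estimate. The link is a random (multi)graph built from $(2n-1)^{4d}$ random squares. Its vertex count is polynomial in $n$, roughly $n^2$ if vertices are length-two words, and its typical degree is roughly $n^{4d-2}$ when vertices are pairs. For $d>\frac{5}{12}$ we have $4d-2>\frac{2}{3}$. I would bound $\mathbb{E}\,\mathrm{Tr}(A^{2k})$ after removing the trivial eigenvalue, Broder--Shamir style, with $k\approx c\log n$. This gives $\lambda_2\le C\,\mathrm{deg}^{-1/2+o(1)}+ (\text{dependency error})$, and the errors vanish exactly when $d>\frac{5}{12}$, presumably because of dependencies among edges coming from the same square. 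Markov's inequality then gives $\max_{i\ge2}|\lambda_i|\to0$ a.a.s., and in particular $\lambda_2<\varepsilon_X$ eventually. This part is presumably already done in the paper for Theorem \ref{main_thm_T}, and can be cited.

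The last step is to verify the remaining hypotheses of the uniformly curved criterion. This includes connectedness of links, which follows from the gap, and the requirement that each link be a genuine graph with a bounded number of types. Then apply the criterion with the fixed constant $\varepsilon_X$.

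I expect the main obstacle to be transferring the criterion to the square setting. The Oppenheim / de Laat--de la Salle criteria are stated for simplicial (triangle) complexes, or with specific $2$-dimensional structure. One must check that the reduction used for $(\mathrm{T})$ produces an object to which the Banach version applies with constants independent of $n$, and not only the Hilbert version. For $(\mathrm{F}L^p)$ the threshold in $p$ depended on $n$ and a uniform bound was unnecessary, but here $X$ is arbitrary among uniformly curved spaces. My first attempt would be to phrase the reduction as a group acting on a triangle complex, for example via the subgroup generated by length-two words or a partite triangulation of each square, so that the de Laat--de la Salle theorem applies verbatim.
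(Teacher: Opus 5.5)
Your proposal has a genuine gap, at exactly the step you flag as the main obstacle. You never exhibit a triangle complex (or triangular presentation) whose vertex links could be expanders, and the single-square constructions you sketch cannot work.

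If each square contributes $O(1)$ edges to a link on a vertex set like $W_2(S)$, which has about $4n^2$ vertices, then with $\lfloor(2n-1)^{4d}\rfloor$ relators the average degree is $\asymp n^{4d-2}$. This tends to $0$ for every $d<\frac{1}{2}$. Your inequality ``$4d-2>\frac{2}{3}$'' is an arithmetic slip: $d>\frac{5}{12}$ only gives $4d-2>-\frac{1}{3}$. Such a link consists mostly of isolated vertices and has no spectral gap at all.

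With vertex set $S\cup S^{-1}$ you are looking at the link of the square presentation complex. The triangle-based criteria of Oppenheim and de Laat--de la Salle do not apply to it. Triangulating each square separately does not help either: for instance, cone points get $4$-cycles as links, and these have eigenvalue $-1$.

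Consequently the threshold $\frac{5}{12}$ is never derived in your outline (``presumably because of dependencies''), and the trace-method step has no well-defined graph to act on. Note also that the criterion needs the two-sided bound $\max_{i\geq 2}|\lambda_i|<\varepsilon(X)$, not merely $\lambda_2$ small with $\lambda_{\min}$ away from $-1$.

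The missing idea is to build each triangle out of \emph{three} random squares. A triagram $\tau=(r_1,r_2,r_3)$ is a triple of relators glued cyclically around a common vertex, consecutive ones sharing an edge. The boundary of the resulting hexagonal diagram is $\partial(\tau)=\partial_1(\tau)\partial_2(\tau)\partial_3(\tau)$, a cyclically reduced word of length $3$ over $W_2(S)$. Writing $W_2(S)=U\sqcup U^{-1}$, the triangular group $\widetilde\Gamma=\langle U\mid\widetilde\partial(\mathcal{T})\rangle$ maps onto a subgroup of index at most $2$ in $\Gamma$. The links of its Cayley complex (faces taken with multiplicity) are all isomorphic to the triagram link graph $L(S,\mathcal{T})$ on $W_2(S)$, and de Laat--de la Salle's argument goes through for this action (Theorem~\ref{crit_unif_curved}).

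There are $\asymp(2n)^9$ triagrams, each present with probability $\asymp(2n)^{12(d-1)}$, spread over $\asymp(2n)^2$ vertices. So the expected degrees in $L(S,\mathcal{T})$ are $\asymp(2n)^{12d-5}$, which tend to infinity precisely when $d>\frac{5}{12}$. That is the source of the threshold; note that $12d-5=3(4d-1)-2$, which is where your ``$\frac{2}{3}$'' really belongs.

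The remaining work is substantial and not covered by your sketch:
\begin{itemize}
\item pass to the binomial model;
\item take $\mathcal{T}=(R_1\times R_2\times R_3)\cap T(S)$ with independent $R_i$, so that $L$ splits as $L_1\sqcup L_2\sqcup L_3$;
\item prove degree concentration via Kim--Vu;
\item run the trace method with the fixed $k=\lfloor 4/\alpha\rfloor+1$, where $\alpha=12d-5$, counting generic versus non-generic triagram cycles, to obtain a two-sided gap $n^{-\alpha^2/32}$ (Proposition~\ref{full_expander_prop}).
\end{itemize}
Once that is in place, your worry about uniformity in $X$ disappears, since $\varepsilon(X)$ is fixed and $n^{-\alpha^2/32}\to 0$. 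The case $d>\frac{1}{2}$ is trivial, because the group is then a.a.s.\ finite.
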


We note that Theorem~\ref{main_thm_T} is a~direct consequence of Theorem~\ref{main_thm_FLp}, by previous considerations.

\subsection{Application to conformal dimension}\label{intro_confdim_subsec}

Given a~finitely generated hyperbolic group~$\Gamma$, we denote 
by $\partial\Gamma$ its Gromov boundary ---
it is a~topological space with a~natural family of 
quasi-symmetrically equivalent metrics (visual metrics). We define the conformal dimension 
of $\partial\Gamma$, denoted $\operatorname{Confdim}
\left(\partial\Gamma\right)$, as the infimum of~Hausdorff 
dimensions of~metric spaces $Y$ which  
are quasi-symmetrically equivalent to $\partial\Gamma$.
This number is a~quasi-isometry invariant of~$\Gamma$ among
hyperbolic groups \cite[th\'eor\`eme 1.6.4]{bou95}. 
See \cite{mac10} for more details.

Using~Theorem~\ref{main_thm_FLp}, we can repeat the reasoning
of~\cite[Subsection 10.2]{dru19}, designed for random groups in the triangular model, to obtain two-sided bounds on the~conformal dimension of random groups in the square model. Namely, in Section~\ref{bound_conf_dim_sec}, we prove the following.

\begin{thm}\label{confdim_bounds_square_thm}
For every $d\in\left(\frac{5}{12},\frac{1}{2}\right)$, there exist constants $f_d, F_d>0$ with the following property. The~conformal dimension of the~Gromov boundary of~a~random group $\Gamma$ in the~square model at density $d$, with $n$ generators, a.a.s.\ satisfies the bounds:
\begin{equation*}
	f_d(\log{n})^{\frac{1}{2}}\leq\operatorname{Confdim}(\partial\Gamma)\leq F_d\log{n}.
\end{equation*}
\end{thm}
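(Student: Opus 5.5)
The two bounds are proved separately. The lower bound comes from Theorem~\ref{main_thm_FLp}, and the upper bound comes from an explicit geometric construction, following \cite[Subsection 10.2]{dru19}.

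\textbf{Lower bound.} We use the result of Bourdon (see \cite{bou95} and the discussion in \cite{dru19}): if $\Gamma$ is a non-elementary hyperbolic group and $p>\operatorname{Confdim}(\partial\Gamma)$, then $\Gamma$ admits a proper, fixed-point-free affine isometric action on an $\ell^p$-space. In particular, $\Gamma$ fails property $(\mathrm{F}L^p)$ for every such $p$. By part~\ref{random_square_p2} of the introduction, for $d\in(\frac{5}{12},\frac12)$ the group $\Gamma$ is a.a.s.\ infinite and non-elementary hyperbolic. By Theorem~\ref{main_thm_FLp} it a.a.s.\ has $(\mathrm{F}L^p)$ for all $p\le f_d(\log n)^{1/2}$. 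These two facts are compatible only if no $p$ in that interval exceeds the conformal dimension. Hence
\[
\operatorname{Confdim}(\partial\Gamma)\ge f_d(\log n)^{1/2},
\]
with the same constant $f_d$, or a slightly smaller one if the endpoint needs care.

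\textbf{Upper bound.} We bound the conformal dimension above by the Hausdorff dimension of $\partial\Gamma$ with respect to a visual metric. For a hyperbolic group acting on a hyperbolic space $X$ with visual parameter $\varepsilon$, this Hausdorff dimension equals $h/\varepsilon$, where $h$ is the volume entropy of the action (Coornaert). The entropy of the Cayley graph is at most $\log(2n-1)$. The difficulty is to obtain a visual parameter $\varepsilon$ bounded below independently of $n$, which requires a hyperbolicity constant that is uniform in $n$.

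\begin{itemize}
\item \emph{Choice of space.} Following \cite{dru19}, we do not use the Cayley graph. Instead, let $X$ be the universal cover of the presentation complex, with its square cells made Euclidean unit squares. This is a $2$-complex.
\item \emph{Uniform hyperbolicity.} The isoperimetric inequality \cite[Theorem~3.14]{odr16a} holds for $d<\frac12$ with a linear constant depending only on $d$. From it we deduce that $X$ is $\delta$-hyperbolic with $\delta=\delta_d$ independent of $n$, and that $\Gamma$ acts geometrically on $X$. Because the constant is uniform, there is a visual parameter $\varepsilon_d>0$ depending only on $d$.
\item \emph{Entropy.} The $1$-skeleton of $X$ is the Cayley graph, and the metric on $X$ is bi-Lipschitz to the graph metric with constants independent of $n$. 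Hence the entropy of $X$ is $O(\log(2n-1))$.
\item \emph{Conclusion.} Combining the two bounds,
\[
\operatorname{Confdim}(\partial\Gamma)\le \frac{h}{\varepsilon_d}\le F_d\log n .
\]
\end{itemize}

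\textbf{Main obstacle.} The hard step is the uniformity of the hyperbolicity constant, and hence of $\varepsilon_d$, in $n$. For this we would mirror \cite{dru19}'s use of Ollivier's uniform isoperimetric inequality for the triangular model, replacing it with \cite[Theorem~3.14]{odr16a}. We would then apply the standard argument that a linear isoperimetric inequality with constant $c$, in a complex whose cells have boundedly many sides, gives hyperbolicity with $\delta=O(1/c)$ independent of $n$.
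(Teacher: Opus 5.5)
Your argument is correct and is essentially the paper's. The lower bound is exactly Theorem~\ref{main_thm_FLp} combined with Bourdon's theorem. The upper bound uses the same chain: isoperimetric inequality, then a uniform $\delta$ via Ollivier, then a visual parameter of order $1/\delta$, then Coornaert's formula $h/\varepsilon$ with $h\le\log(2n-1)$.

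The one real difference is your passage to the Euclidean square complex $X$, and it is unnecessary. Applying \cite[Proposition~15]{oll07} to the isoperimetric inequality of \cite[Theorem~3.14]{odr16a} already gives $\delta=\frac{17}{1-2d}$ for the Cayley graph itself, uniformly in $n$. Working there directly, the paper avoids two extra steps that your route needs:
\begin{itemize}
\item showing that $X$ and its $1$-skeleton are quasi-isometric with constants independent of $n$, which requires Bridson's bound on the number of cells crossed by short geodesics, since vertex links in $X$ are huge;
\item transferring the hyperbolicity constant through the Morse lemma.
\end{itemize}
Your route still works, but it buys nothing and only worsens the constant $F_d$ compared with the paper's explicit $\frac{200}{1-2d}$.

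For the lower bound, two small corrections:
\begin{itemize}
\item The result you need is \cite[th\'eor\`eme~0.1]{bou16}, not \cite{bou95}.
\item That theorem is stated for the Ahlfors-regular conformal dimension. To conclude for $\operatorname{Confdim}$ as defined here, you must note that the two coincide for boundaries of non-elementary hyperbolic groups. This follows from \cite[Proposition~2.2.6]{mac10}, with uniform perfectness coming from the measure in \cite{coo93}, and the paper makes exactly this remark.
\end{itemize}
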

The constants $f_d$ are the same as in Theorem~\ref{main_thm_FLp}.
A~consequence of Theorem~\ref{confdim_bounds_square_thm}
is that the~quasi-isometry types of random groups in the square 
model at given density $d\in\left(\frac{5}{12}, \frac{1}{2}\right)$ keep 
changing as $n\rightarrow\infty$. Precisely speaking, we have:

\begin{cor}
Suppose $d\in\left(\frac{5}{12}, \frac{1}{2}\right)$. Then there exist:\ an increasing sequence of natural numbers
$(n_{k,d})_{k=1}^{\infty}$ and a~sequence $(Q_{n,d})_{n=1}^\infty$ of sets of (isomorphism classes of) finitely generated groups, satisfying the 
following properties. For every $n\geq 1$, a random group in the model $\mathcal{Q}\left(n, d\right)$ a.a.s.\ belongs to $Q_{n,d}$, 
and for all numbers $k\geq 1$, $i\leq k$, and $j\geq n_{k,d}$, 
no group in $Q_{i,d}$ is quasi-isometric to a~group in $Q_{j,d}$.
\end{cor}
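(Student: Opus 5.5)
The corollary follows from Theorem~\ref{confdim_bounds_square_thm} together with the quasi-isometry invariance of conformal dimension among hyperbolic groups \cite[th\'eor\`eme 1.6.4]{bou95}. The plan is to let $Q_{n,d}$ consist of groups whose boundary conformal dimension lies in the window given by that theorem, and then to pick $n_{k,d}$ so that the windows for small indices lie strictly below the windows for large indices.

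\textbf{Defining the sets.} Fix $d\in\left(\frac{5}{12},\frac12\right)$ and take the constants $f_d, F_d$ from Theorem~\ref{confdim_bounds_square_thm}. For $n\geq 2$, let $Q_{n,d}$ be the set of (isomorphism classes of) finitely generated, infinite, non-elementary hyperbolic groups $\Gamma$ satisfying
\begin{equation*}
f_d(\log n)^{\frac12}\leq \operatorname{Confdim}(\partial\Gamma)\leq F_d\log n.
\end{equation*}
For $n=1$ the upper bound is $0$, which is degenerate. To avoid this, replace $\log n$ by $\max(\log n,1)$ in the upper bound. Enlarging the window only makes ``a.a.s.\ in $Q_{n,d}$'' easier to satisfy. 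By part~\ref{random_square_p2} of the list in the introduction and by Theorem~\ref{confdim_bounds_square_thm}, a random group in $\mathcal{Q}(n,d)$ a.a.s.\ belongs to $Q_{n,d}$.

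\textbf{Choosing the sequence.} Set $M_k=F_d\max(\log k,1)$, which is an upper bound for $\operatorname{Confdim}(\partial\Gamma)$ over all $\Gamma\in Q_{i,d}$ with $i\leq k$. Since $f_d(\log j)^{\frac12}\to\infty$ as $j\to\infty$, for each $k$ we can choose $n_{k,d}$ with $f_d(\log n_{k,d})^{\frac12}>M_k$. Taking $n_{k,d}$ larger if necessary makes the sequence strictly increasing. Because $f_d(\log j)^{\frac12}$ is increasing in $j$, every $j\geq n_{k,d}$ then satisfies $f_d(\log j)^{\frac12}>M_k$.

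\textbf{Conclusion.} Let $\Gamma\in Q_{i,d}$ with $i\leq k$, and let $\Gamma'\in Q_{j,d}$ with $j\geq n_{k,d}$. Then
\begin{equation*}
\operatorname{Confdim}(\partial\Gamma)\leq M_k<\operatorname{Confdim}(\partial\Gamma').
\end{equation*}
Both groups are hyperbolic and conformal dimension is a quasi-isometry invariant among hyperbolic groups, so $\Gamma$ and $\Gamma'$ are not quasi-isometric.

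The only real content is Theorem~\ref{confdim_bounds_square_thm}. The remaining points are bookkeeping: the degenerate small $n$, and the fact that $(\log n)^{1/2}$ growth eventually exceeds any fixed bound, which is what makes the separation possible.
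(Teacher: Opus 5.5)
Your proof is correct and follows essentially the same route as the paper: you define $Q_{n,d}$ as the hyperbolic groups whose boundary conformal dimension lies in the window of Theorem~\ref{confdim_bounds_square_thm}, choose an increasing $n_{k,d}$ with $F_d\log k < f_d(\log n_{k,d})^{1/2}$, and conclude by quasi-isometry invariance of conformal dimension. Your $\max(\log n,1)$ adjustment is harmless but not needed, since ``a.a.s.'' only concerns $n\to\infty$; in the paper's version the degenerate case $k=1$ merely forces $n_{1,d}\geq 2$.
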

\begin{proof}
For $n\geq 1$, let $Q_{n,d}$ be the~set of finitely generated hyperbolic groups $\Gamma$ which satisfy the inequalities of Theorem~\ref{confdim_bounds_square_thm}. Then it suffices to choose any increasing sequence $(n_{k,d})_{k=1}^\infty$, such that
$F_d\log{k} < f_d\left(\log{n_{k,d}}\right)^{\frac{1}{2}}$
for $k \geq 1$.

\end{proof}
 
\subsection{Outline of the article and the main ideas}
\label{outline_subsec}

The~rest of the~article is structured as follows.

In Section~\ref{prelim_sec} we recall 
some basic facts about expander graphs and uniformly curved Banach spaces. There we also 
describe a~few pieces of notation and conventions we use.

Section~\ref{criterion_sec} describes the~central idea of the~article. Proofs of Theorems~\ref{main_thm_T}, \ref{main_thm_F_X} and \ref{main_thm_FLp} rely on the following construction.
Suppose $\Gamma = \lle S|R \rre$ is a~group presentation with a~finite generating set $S$, where $R$ contains
``many'' relators of length 4, and we want to 
show that $\Gamma$ has property $(\textrm{F}_X)$ for some uniformly curved Banach space $X$.
We call a~triple $\tau=(r_1, r_2, r_3)$, of cyclically reduced words of length 4 over $S\cup S^{-1}$, a~\emph{triagram over $S$} if 
the words are of form $r_1=x_1x_2x_3x_4$, $r_2=y_1y_2y_3y_4$ and $r_3=z_1z_2z_3z_4$, where $x_i, 
y_i, z_i \in S\cup S^{-1}$ for $1\leq i\leq 4$ and the following conditions are satisfied:
\begin{itemize}
	\item $x_4=y_1^{-1}$, $y_4=z_1^{-1}$, $z_4=x_1^{-1}$,
	\item $x_3\neq y_2^{-1}$, $y_3\neq z_2^{-1}$, $z_3\neq x_2^{-1}$.
\end{itemize}
These conditions imply that the word $\partial(\tau)=(y_3z_2)(z_3x_2)(x_3y_2)$ is the~boundary word 
of~the~van Kampen diagram shown in the centre of~Figure~\ref{triag_constr1}. Hence, if additionally ${r_1, r_2, r_3\in R}$, then $\partial(\tau)$ is a~word representing 
the~trivial element of $\Gamma$, which can be seen as a~cyclically reduced word of length 3 over $W_2(S) = \{st : s,t\in S\cup S^{-1},\,s\neq t^{-1}\}$.
\begin{figure}[H]
	\centering
	\begin{tikzpicture}[line cap = round, line join = round]

\def\midarr{0.5}
\def\boundarr{0.5}
\tikzset{boundedgestyle/.style={
        thick,
        decoration={
            markings,
            mark=at position \boundarr with {\arrow{angle 45}}
        },
        postaction={decorate}
}}
\tikzset{midedgestyle/.style={
        thick,
        decoration={
            markings,
            mark=at position \midarr with {\arrow{angle 45}}
        },
        postaction={decorate}
}}

\def\l{1}

\def\s{1}
\def\t{1.732}
\def\w{0.577}
\def\txt{0.25}

\def\p{0}
\def\q{0}

\def\r{0.15*\l}

\def\xa{\l*\p}
\def\ya{\l*\q-\l*2*\w*\r}
\draw[midedgestyle] (\xa, \ya) -- (\xa-\l*\t, \ya-\l*\s);
\node at (\xa-\midarr*\l*\t+\txt*0.5*\l*\s, \ya-\midarr*\l*\s-\txt*0.5*\t*\l) {$x_1$};
\draw[boundedgestyle] (\xa-\l*\t, \ya-\l*\s) -- (\xa, \ya-2*\l*\s);
\node at (\xa-\l*\t+\boundarr*\l*\t-\txt*0.5*\l*\s, \ya-\l*\s-\boundarr*\l*\s-\txt*0.5*\t*\l) {$x_2$};
\draw[boundedgestyle] (\xa, \ya-2*\l*\s) -- (\xa+\l*\t, \ya-\l*\s);
\node at (\xa+\boundarr*\l*\t+\txt*0.5*\l*\s, \ya-2*\l*\s+\boundarr*\l*\s-\txt*0.5*\t*\l) {$x_3$};
\draw[midedgestyle] (\xa+\l*\t, \ya-\l*\s) -- (\xa, \ya);
\node at (\xa+\l*\t-\midarr*\l*\t-\txt*0.5*\l*\s, \ya-\l*\s+\midarr*\l*\s-\txt*0.5*\t*\l) {$x_4$};

\def\xb{\l*\p+\l*\r}
\def\yb{\l*\q+\l*\w*\r}
\draw[midedgestyle] (\xb, \yb) -- (\xb+\l*\t, \yb-\l*\s);
\node at (\xb+\midarr*\l*\t+\txt*0.5*\l*\s, \yb-\midarr*\l*\s+\txt*0.5*\t*\l) {$y_1$};
\draw[boundedgestyle] (\xb+\l*\t, \yb-\l*\s) -- (\xb+\l*\t, \yb+\l*\s);
\node at (\xb+\l*\t+\txt*\l*\s, \yb-\l*\s+\boundarr*2*\l*\s) {$y_2$};
\draw[boundedgestyle] (\xb+\l*\t, \yb+\l*\s) -- (\xb, \yb+2*\l*\s);
\node at (\xb+\l*\t-\boundarr*\l*\t+\txt*0.5*\l*\s, \yb+\l*\s+\boundarr*\l*\s+\txt*0.5*\t*\l) {$y_3$};
\draw[midedgestyle] (\xb, \yb+2*\l*\s) -- (\xb, \yb);
\node at (\xb+\txt*\l*\s, \yb+2*\l*\s-\midarr*2*\l*\s) {$y_4$};

\def\xc{\l*\p-\l*\r}
\def\yc{\l*\q+\l*\w*\r}
\draw[midedgestyle] (\xc, \yc) -- (\xc, \yc+2*\l*\s);
\node at (\xc-\txt*\l*\s, \yc+\midarr*2*\l*\s) {$z_1$};
\draw[boundedgestyle] (\xc, \yc+2*\l*\s) -- (\xc-\l*\t, \yc+\l*\s);
\node at (\xc-\boundarr*\l*\t-\txt*0.5*\l*\s, \yc+2*\l*\s-\boundarr*\l*\s+\txt*0.5*\t*\l) {$z_2$};
\draw[boundedgestyle] (\xc-\l*\t, \yc+\l*\s) -- (\xc-\l*\t, \yc-\l*\s);
\node at (\xc-\l*\t-\txt*\l*\s, \yc+\l*\s-\boundarr*2*\l*\s) {$z_3$};
\draw[midedgestyle] (\xc-\l*\t, \yc-\l*\s) -- (\xc, \yc);
\node at (\xc-\l*\t+\midarr*\l*\t-\txt*0.5*\l, \yc-\l*\s+\midarr*\l*\s+\txt*0.5*\t*\l) {$z_4$};

\def\q{0}
\def\p{5}
\def\r{0}

\def\xa{\l*\p}
\def\ya{\l*\q-\l*2*\w*\r}
\draw[midedgestyle] (\xa, \ya) -- (\xa-\l*\t, \ya-\l*\s);
\node at (\xa-\boundarr*\l*\t+\txt*0.5*\l*\s, \ya-\boundarr*\l*\s-\txt*0.5*\t*\l) {$x_1$};
\draw[boundedgestyle] (\xa-\l*\t, \ya-\l*\s) -- (\xa, \ya-2*\l*\s);
\node at (\xa-\l*\t+\boundarr*\l*\t-\txt*0.5*\l*\s, \ya-\l*\s-\boundarr*\l*\s-\txt*0.5*\t*\l) {$x_2$};
\draw[boundedgestyle] (\xa, \ya-2*\l*\s) -- (\xa+\l*\t, \ya-\l*\s);
\node at (\xa+\boundarr*\l*\t+\txt*0.5*\l*\s, \ya-2*\l*\s+\boundarr*\l*\s-\txt*0.5*\t*\l) {$x_3$};

\def\xb{\l*\p+\l*\r}
\def\yb{\l*\q+\l*\w*\r}
\draw[midedgestyle] (\xb, \yb) -- (\xb+\l*\t, \yb-\l*\s);
\node at (\xb+\boundarr*\l*\t+\txt*0.5*\l*\s, \yb-\boundarr*\l*\s+\txt*0.5*\t*\l) {$y_1$};
\draw[boundedgestyle] (\xb+\l*\t, \yb-\l*\s) -- (\xb+\l*\t, \yb+\l*\s);
\node at (\xb+\l*\t+\txt*\l*\s, \yb-\l*\s+\boundarr*2*\l*\s) {$y_2$};
\draw[boundedgestyle] (\xb+\l*\t, \yb+\l*\s) -- (\xb, \yb+2*\l*\s);
\node at (\xb+\l*\t-\boundarr*\l*\t+\txt*0.5*\l*\s, \yb+\l*\s+\boundarr*\l*\s+\txt*0.5*\t*\l) {$y_3$};

\def\xc{\l*\p-\l*\r}
\def\yc{\l*\q+\l*\w*\r}
\draw[midedgestyle] (\xc, \yc) -- (\xc, \yc+2*\l*\s);
\node at (\xc-\txt*\l*\s, \yc+\boundarr*2*\l*\s) {$z_1$};
\draw[boundedgestyle] (\xc, \yc+2*\l*\s) -- (\xc-\l*\t, \yc+\l*\s);
\node at (\xc-\boundarr*\l*\t-\txt*0.5*\l*\s, \yc+2*\l*\s-\boundarr*\l*\s+\txt*0.5*\t*\l) {$z_2$};
\draw[boundedgestyle] (\xc-\l*\t, \yc+\l*\s) -- (\xc-\l*\t, \yc-\l*\s);
\node at (\xc-\l*\t-\txt*\l*\s, \yc+\l*\s-\boundarr*2*\l*\s) {$z_3$};

\def\q{0}
\def\p{10}
\def\r{0}
\def\xa{\l*\p}
\def\ya{\l*\q-\l*2*\w*\r}
\def\xb{\l*\p+\l*\r}
\def\yb{\l*\q+\l*\w*\r}
\def\xc{\l*\p-\l*\r}
\def\yc{\l*\q+\l*\w*\r}
\draw[boundedgestyle] (\xa, \ya-2*\l*\s) -- (\xb+\l*\t, \yb+\l*\s);
\node[rotate=60] at (0.5*\xa+0.5*\xb+0.5*\l*\t+1.25*\txt*0.5*\t*\l, 0.5*\ya-\l*\s+0.5*\yb+0.5*\l*\s-1.25*\txt*0.5*\l) {$x_3y_2$};
\draw[boundedgestyle] (\xb+\l*\t, \yb+\l*\s) -- (\xc-\l*\t, \yc+\l*\s);
\node at (0.5*\xb+0.5*\l*\t+0.5*\xc-0.5*\l*\t, 0.5*\yb+0.5*\l*\s+0.5*\yc+0.5*\l*\s+\txt*\l) {$y_3z_2$};
\draw[boundedgestyle] (\xc-\l*\t, \yc+\l*\s) -- (\xa, \ya-2*\l*\s);
\node[rotate=-60] at (0.5*\xc-0.5*\l*\t+0.5*\xa-1.25*\txt*0.5*\t*\l, 0.5*\yc+0.5*\l*\s+0.5*\ya-\l*\s-1.25*\txt*0.5*\l) {$z_3x_2$};
\end{tikzpicture}
	\vspace{1em}
	\caption{Obtaining relators of length 3 over $W_2(S)$.}
\label{triag_constr1}
\end{figure}
Suppose $\mathcal{T}\subseteq R^3$ is a~set of triagrams. 
Partition $W_2(S)=U\sqcup U^{-1}$ for some~subset $U$ and let $\widetilde{R}$ be the~set of words of form $\partial(\tau)$, where 
$\tau\in\mathcal{T}$, viewed as words of length 3 over $W_2(S)$.
Then the~group $\widetilde{\Gamma}=\lle U | \widetilde{R}\rre$ maps onto a~finite-index subgroup of $\Gamma$ (via a~homomorphism expanding each $u\in U$ 
into the corresponding product of 2 elements of $S\cup S^{-1}$). Hence, to prove that $\Gamma$ has property $(\textrm{F}_X)$, it 
suffices to show that $\widetilde{\Gamma}$ does. In appropriate cases the~latter can be achieved by using \cite[Theorem A]{dLdlS21} --- a~criterion well-suited 
for groups with triangular presentations, which reduces the~task to proving that 
a~certain link graph $L(S,\mathcal{T})$, called the~\emph{triagram link graph} and 
defined in terms of $W_2(S)$ and $\mathcal{T}$, is a~two-sided $\eps$-spectral expander 
for sufficiently small $\eps$. We summarize this reasoning as Theorem~\ref{crit_unif_curved} and formulate a~similar criterion (with a~weaker type of assumption) for property~$(\textrm{T})$, Theorem~\ref{crit_kazhdan}.

In Section~\ref{fix_sec} we prove Theorems~\ref{main_thm_T}, \ref{main_thm_F_X} and \ref{main_thm_FLp} by showing that $L=L(S,\mathcal{T})$ indeed satisfies the~required conditions a.a.s.\ if $\Gamma=\lle S|R\rre$ is a~random group in the~square model $\mathcal{Q}(n,d)$ for $d>\frac{5}{12}$, for some choice of $\mathcal{T}\subseteq R^3$ (Proposition~\ref{full_expander_prop}). To simplify calculations,
in Subsection \ref{binomial_subsec} we observe that it is sufficient to establish this for $\Gamma$ being a~random group in the closely related \emph{binomial square model $\mathcal{Q}(n,\rho)$}.
The~graph $L$ is then a~random graph, which can be naturally divided into 3 parts: $L_1, L_2$ and $L_3$. We show that $L$ is a.a.s.\ a~two-sided $\eps$-spectral expander
for $\eps\rightarrow 0$ by proving that each $L_i$, for $1\leq i \leq 3$, 
has the~same kind of property (Lemma~\ref{part_expander_lem}) and satisfies a~certain asymptotic regularity condition (Lemma~\ref{regularity_lem}). To prove Lemma~\ref{part_expander_lem} (in Subsection~\ref{part_expander_proof_subsec}), we assume Lemma~\ref{regularity_lem} and  
use the~trace method, as described e.g.\ in \cite{bro87}: we consider the~expected value of a~power of the~Markov operator $L_i$, denoted $A_{L_i}$, and we use it to show that there is not enough ``room'' for $A_{L_i}$ to have eigenvalues ``far'' from 0, besides a~simple eigenvalue 1. 

In Section~\ref{asymptotic_regularity_sec} we give the~postponed proof of Lemma~\ref{regularity_lem} by using a~concentration inequality of Kim--Vu \cite{kim00}.

Section~\ref{bound_conf_dim_sec} is dedicated
to showing Theorem~\ref{confdim_bounds_square_thm}, which gives two-sided bounds on 
the~conformal dimension of the~boundary of~a~random group in the~square 
model $\mathcal{Q}(n,d)$ for $d\in\left(\frac{5}{12}, \frac{1}{2}\right)$.

\subsection{Human authorship declaration}
The entirety of this article, including all mathematical arguments and insights, content of the~text and figures, has been produced by the~authors without any~use of generative artificial intelligence tools.

\subsection{Acknowledgements} We would like to thank Piotr Przytycki for introducing us to the~topic of 
this article, for providing us with numerous valuable mathematical insights and editorial 
comments, and for hosting us during a~couple of research visits to the~McGill University.
We would also like to thank John M. Mackay and Piotr Nowak for interesting discussions and useful suggestions.

The first author was partially supported by the European Research Council (ERC) under the European Union’s Horizon 2020 research and innovation programme (grant agreement no. 
677120-INDEX) and by (Polish) Narodowe Centrum Nauki, UMO-2018/30/M/ST1/00668.

\section{Preliminaries and notation}\label{prelim_sec}
\subsection{Expanders}
The graphs that we consider in this article are finite, undirected, possibly with multiple edges, but typically without loops.
Formally, here a~\emph{graph} is a~triple $G=(V,E,\nu)$, where $V\neq \emptyset$ (the \emph{vertex set}), $E$ (the \emph{edge set}) are finite 
disjoint sets and ${\nu : E \longrightarrow \{\{x,y\} : x, y\in V\}}$ is a~map assigning sets of
endpoints to edges. 
We say that an~edge~$e$ is \emph{adjacent} to a vertex $x$ if $x\in \nu(e)$. We call an edge $e$ a~\emph{loop} if $|\nu(e)|=1$. For none of the graphs defined in this article $\nu$ is explicitly stated, as it is always clear from the~context.
For $x,y \in V$ denote by $\omega_G(x,y)$ the number of edges $e\in E$, such that $\nu(e)=\{x,y\}$, i.e.\ ~edges~$e$ with endpoints $x$ and $y$. We call the function $\omega_G : V^2 \longrightarrow \mathbb{N}$ the~\emph{weight function} of~$G$.
The~\emph{degree} of a~vertex $x\in V$ in the~graph $G$ is the number $d_G(x)=\sum_{y\in V} \omega_G(x,y)$, i.e.\ ~the number of edges adjacent to $x$, and we call the resulting function $d_G:V\longrightarrow \mathbb{N}$ the~\emph{degree function} of $G$. We say that a~vertex $x$ is \emph{isolated}
if $d_G(x)=0$.

For the~remainder of this subsection, assume that $G$ is a~graph with no isolated vertices and no loops. Denote by $L^2(V, d_G)$ the~vector space of complex-valued 
functions on $V$, equipped with the scalar product $\lle\cdot,\cdot \rre_G$, given by
\begin{equation*}
\lle f, g \rre_G = \sum_{x\in V} d_G(x)f(x)\overline{g(x)}
\end{equation*}
for $f,g\in L^2(V,d_G)$. In other words, $L^2(V,d_G)$ is the~$L^2$--space of $V$ with the~measure defined by $d_G$. 
The~\emph{Markov operator} of $G$ is the operator $A_G : L^2(V,d_G) \longrightarrow L^2(V,d_G)$, defined by
\begin{equation*}
(A_Gf)(x) = \frac{1}{d_G(x)}\sum_{y\in V} \omega_G(x,y)f(y)
\end{equation*}
for $f\in L^2(V,d_G)$, $x\in V$.
Using the fact that $\omega_G(x,y)=\omega_G(y,x)$, it is easy to check that for any $f,g\in L^2(V,d_G)$ we have
\begin{equation*}
    \lle A_Gf, g\rre_G = \sum_{x, y\in V} \omega_G(x,y)f(x)\overline{g(y)} = \overline{\lle f, A_Gg\rre}_G.
\end{equation*}
$A_G$ is thus self-adjoint, so it has $N=|V|$ real eigenvalues and an orthonormal eigenbasis. We denote eigenvalues of $A_G$ by
$\lambda_1(A_G)\geq \lambda_2(A_G)\geq\ldots\geq\lambda_N(A_G)$. All of them lie in the interval $[-1,1]$ and $\lambda_1(A_G)=1$, which corresponds to the eigenvector $\pmb{1}\in L^2(V, d_G)$, defined by $\pmb{1}(x)=1$ for $x\in V$. Moreover, $G$ is connected if and only if 1 is a~simple eigenvalue of $A_G$, i.e.\ if and only if $\lambda_2(A_G)<1$.

Let $\lambda>0$. Similarly as in \cite{opp23a}, given a~graph $G$ without isolated vertices and loops,
we say that $G$ is a~\emph{one-sided $\lambda$-spectral expander} 
if $\lambda_2(A_G)\leq \lambda$, and we say that $G$ is a~\emph{two-sided $\lambda$-spectral expander}
if $|\lambda_i(A_G)|\leq\lambda$ for $2\leq i\leq N$ (equivalently,
$|\lambda_2(A_G)|, |\lambda_N(A_G)|\leq\lambda$). 

Denote $L^2_0(V, d_G) = \left\{ f\in L^2(V,d_G) : \lle f, \pmb{1} \rre_G = 0 \right\}$.
By considering an orthonormal eigenbasis of $A_G$, containing a~scalar multiple of $\pmb{1}$, we arrive at the~following variational characterisation.
\begin{rem}\label{expand_min_max_rem}
	Suppose $G$ is a graph with no isolated vertices and no loops. Then $G$ is a~two-sided $\lambda$-spectral expander if and only if
	\begin{equation*}
	|\lle A_Gf, f\rre_G| \leq \lambda\|f\|_G^2
	\end{equation*}
	for all $f\in L^2_0(V, d_G)$.
\end{rem}

\subsection{Uniformly curved Banach spaces}
Uniformly curved Banach spaces were introduced in \cite{pis10}. 
For example, all $L^p$--spaces for $1<p<\infty$ are uniformly curved. This is a~class of~spaces, for which we can apply
\cite[Theorem A]{dLdlS21} and beyond that our arguments do not rely on their definition.
The~following form of~the definition is taken directly from~\cite{dLdlS21}.

Suppose $X$ is a Banach space and $T:L^2(\Omega_1,\mu_1)\longrightarrow L^2(\Omega_2,\mu_2)$ is an operator between $L^2$--spaces. 
If $T\otimes \text{id}_X$ extends to a~bounded operator $T_X : L^2(\Omega_1, \mu_1; X) \longrightarrow L^2(\Omega_2, \mu_2; X)$,
then let $\|T_X\|$ be the norm of the extension. If such an~extension does not exist, let ${\|T_X\|=\infty}$. For fixed $X$ and $\eps>0$,
denote by $\Delta_X(\eps)$ the~supremum of~values of~$\|T_X\|$ over any measure spaces $(\Omega_1, \mu_1)$, $(\Omega_2, \mu_2)$
and operators $T:L^2(\Omega_1, \mu_1)\longrightarrow L^2(\Omega_2, \mu_2)$, satisfying conditions $\|T:L^1(\Omega_1, \mu_1)\longrightarrow L^1
(\Omega_2, \mu_2)\|\leq 1$, $\|T:L^\infty(\Omega_1,\mu_1)\longrightarrow L^\infty(\Omega_2,\mu_2)\|\leq 1$ and ${\|T:L^2(\Omega_1,\mu_1)\longrightarrow L^2
(\Omega_2,\mu_2)\|\leq \eps}$.

\begin{defin}
A Banach space $X$ is \emph{uniformly curved} if $\Delta_X(\eps)\rightarrow 0$ as $\eps\rightarrow 0$.
\end{defin}
For further examples and discussion of uniformly curved spaces, see \cite{dLdlS21} and \cite{pis10}.

\subsection{General notation}
Many of the named objects and quantities in this article depend implicitly on the number $n$ of generators. 

We use standard $O(\cdot)$ and $o(\cdot)$ asymptotic notations. Suppose $q$ is a~quantity, possibly depending on $n$ and other parameters. We write $O(q)$ in place of an~unnamed quantity $r$, such that $|r|\leq C|q|$ 
for a~constant $C>0$. Similarly, we denote by $o(q)$ a~quantity $r$, such that $|r|\leq f(n)|q|$, 
where $f : \mathbb{N} \longrightarrow \mathbb{R}$ is a~function satisfying $f(n)\rightarrow 0$ as $n\rightarrow\infty$. If $C$ (resp.\ definition of~$f$) depends only on parameters $p_1,\ldots, p_k$, we may write $O_{p_1,\ldots,p_k}(q)$ (resp.\ $o_{p_1,\ldots,p_k}(q)$) instead of $O(q)$ (resp.\ $o(q)$). If $C$ (resp.\ $f$) depends only on the place in the text where the symbol $O(q)$ (resp.\ $o(q)$) is used,
then we may write $O_\text{abs}(q)$ (resp.\ $o_\text{abs}(q)$) instead.
Given functions $f, g : \mathbb{N} \longrightarrow \mathbb{R}$, we write $f\sim g$ if
$f(n) = (1+o(1))g(n)$ for sufficiently large $n$.

By $\log x$ we denote the~natural logarithm of $x$.
Given a~set $C$, we denote by $|C|$ the number of elements of $C$.
Suppose $A$ is an~event in a~probability space. By $\mathds{1}_A$ we denote the~indicator of~$A$, i.e. the~random variable $\mathds{1}_A$
such that $\mathds{1}_A=1$ if $A$ holds and $\mathds{1}_A=0$, otherwise. By $A^c$
we denote the complement of $A$.  

\section{Spectral criterion for square groups}\label{criterion_sec}
In this section we formulate a~spectral criterion for a~group presented by relators of 
length~4 to have property $(\mathrm{F}_X)$ for a~given uniformly curved Banach space $X$, and
a~similiar criterion for property $(\textrm{T})$. Our statements follow from previously known results of this type for groups with triangular presentations. First we introduce some notation.

Suppose $S=\{s_1,s_2,\ldots,s_n\}$ is a~finite set
and let $F(S)$ be the free group generated by $S$. Denote $S^{-1}=\{s_1^{-1},s_2^{-1},\ldots, s_n^{-1}\}\subseteq F(S)$. For $m\geq 1$, denote by $W_m(S)\subseteq F(S)$ the~set 
of~cyclically reduced words of~length $m$ over $S\cup S^{-1}$. 

\begin{defin}
	We call a~triple $\tau=(r_1, r_2, r_3)$ of words in $W_4(S)$ a~\emph{triagram over $S$}
	if~the~words are of form $r_1=x_1x_2x_3x_4$, $r_2=y_1y_2y_3y_4$, $r_3=z_1z_2z_3z_4$, 
	where $x_i, y_i, z_i \in S\cup S^{-1}$ for $1\leq i\leq 4$, and the following conditions 
	are satisfied:
	\begin{itemize}
		\item $x_4=y_1^{-1}, y_4=z_1^{-1}, z_4=x_1^{-1}$,
		\item $x_3y_2, y_3z_2, z_3x_2\in W_2(S)$, i.e. $x_3\neq y_2^{-1}$, $y_3\neq z_2^{-1}$, $z_3\neq x_2^{-1}$.
	\end{itemize}
	In such case, we define $\partial_1(\tau)=y_3z_2$, $\partial_2(\tau)=z_3x_2$, $\partial_3(\tau)=x_3y_2$ and $\partial(\tau)=\partial_1(\tau)\partial_2(\tau)\partial_3(\tau)$. By $T(S)$ we denote the set of all triagrams over $S$. 
\end{defin}

As discussed in Subsection~\ref{outline_subsec}, if $\Gamma=\lle S|R\rre$ is a~group 
presentation and $\tau\in R^3$ is a~triagram over $S$, then the~word $\partial(\tau)$
represents the~trivial element of $\Gamma$. Alternatively, this fact can be seen by explicitly writing
\begin{equation*}
	\partial(\tau)=(y_3z_2)(z_3x_2)(x_3y_2)=(y_1y_2)^{-1}(r_2r_3r_1)(y_1y_2).
\end{equation*}

For any $\tau\in T(S)$, the~product $\partial(\tau)=\partial_1(\tau)\partial_2(\tau)\partial_3(\tau)$ can be seen as a~word of length 
3 over $W_2(S)$, which is cyclically reduced in the sense that $\partial_2(\tau)\neq \partial_1(\tau)^{-1}$,
$\partial_3(\tau)\neq\partial_2(\tau)^{-1}$ and $\partial_1(\tau) \neq \partial_3(\tau)^{-1}$.

To formulate our spectral criteria, we will be using the~following graph construction. As~we explain 
later, it produces the link graph of (a variant of) the Cayley 
complex of a~presentation, in which $W_2(S)$ is the set of generators and their inverses,
and relators are of form $\partial_1(\tau)\partial_2(\tau)\partial_3(\tau)$, where $\tau$ is a 
triagram.

\begin{defin}\label{def_triagram_link_graph}
Suppose $\mathcal{T}$ is a~set of triagrams over a~finite set of generators $S$. The~\emph{triagram link graph} $L(S,\mathcal{T})$ is the~graph with the~vertex set $W_2(S)$ and the~edge set constructed as follows.

At the~start, let $E_1, E_2, E_3$ be all empty sets of edges between vertices in $W_2(S)$.

Then, for each triagram $\tau\in\mathcal{T}$ in turn, introduce the following edges:
\begin{itemize}
	\item a~new edge in $E_1$ between the~vertices $\partial_2(\tau)^{-1}$ and $\partial_3(\tau)$,
	\item a~new edge in $E_2$ between the~vertices $\partial_3(\tau)^{-1}$ and $\partial_1(\tau)$,
	\item a~new edge in $E_3$ between the~vertices $\partial_1(\tau)^{-1}$ and $\partial_2(\tau)$.
\end{itemize}

Finally, let the edge set of $L(S,\mathcal{T})$ be $E=E_1\sqcup E_2\sqcup E_3$. 

Additionally, for $1\leq i\leq 3,$ denote by $L_i(S,\mathcal{T})$ the graph with the vertex set $W_2(S)$ and the edge set $E_i$.
\end{defin}

The following theorem is our criterion for property $(\mathrm{F}_X)$ of groups presented by 
relations of~length~4, which is the main tool of this article. It is an analogue and, as we will show, a~direct consequence of
\cite[Theorem~A with Remark 5.2(i)]{dLdlS21}.
\begin{thm}\label{crit_unif_curved}
	For every uniformly curved Banach space $X$, there exists a~constant $\eps(X)>0$ with the following property.
	
	Suppose $\Gamma=\lle S|R\rre$ is a~group presentation with a~finite generating set $S$. If there exists 
	a~set of triagrams $\mathcal{T}\subseteq R^3\cap T(S)$,
	such that the~triagram link 
	graph $L(S,\mathcal{T})$ is a~two-sided $\eps$-spectral expander for some $\eps<\eps(X)$, then $\Gamma$ has property $(\mathrm{F}_X)$.
	
	If $X$ is an $L^p$--space with $p\geq 2$, then one can take $\eps(X)=2p^{-\frac{p}{2}}2^{-\frac{p^2}{2}}$.
\end{thm}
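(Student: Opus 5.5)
We will reduce Theorem~\ref{crit_unif_curved} to \cite[Theorem~A with Remark 5.2(i)]{dLdlS21}, applied to an auxiliary group with a triangular presentation. Fix a partition $W_2(S)=U\sqcup U^{-1}$; this is possible since no element of $W_2(S)$ equals its own inverse. Let $\widetilde{R}$ be the set of words $\partial(\tau)=\partial_1(\tau)\partial_2(\tau)\partial_3(\tau)$ for $\tau\in\mathcal{T}$, viewed as cyclically reduced words of length $3$ over $U\cup U^{-1}$, and set $\widetilde{\Gamma}=\lle U|\widetilde{R}\rre$.

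\textbf{Step 1: $\widetilde{\Gamma}$ surjects onto a finite-index subgroup of $\Gamma$.} Define $\phi:F(U)\to F(S)$ by sending each $u=st\in U$ to the product $st$. By the identity $\partial(\tau)=(y_1y_2)^{-1}(r_2r_3r_1)(y_1y_2)$, each relator in $\widetilde{R}$ maps to a conjugate of a product of elements of $R$. Hence $\phi$ descends to a homomorphism $\widetilde{\Gamma}\to\Gamma$.

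Its image is the subgroup $H$ generated by the images of length-$2$ words. The words $s t$ with $s\neq t^{-1}$ generate the same subgroup of $\Gamma$ as all length-$2$ words, since $ss^{-1}=1$. This is the image of the even-length subgroup of $F(S)$, which has index $2$ in $F(S)$. So $[\Gamma:H]\le 2$.

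Property $(\mathrm{F}_X)$ passes to quotients, so $H$ inherits it from $\widetilde{\Gamma}$. For a finite-index subgroup $H$, averaging the fixed point over the coset orbit (the orbit of an $H$-fixed point is finite, and its barycentre is $\Gamma$-fixed) gives $(\mathrm{F}_X)$ for $\Gamma$. It therefore suffices to show that $\widetilde{\Gamma}$ has $(\mathrm{F}_X)$.

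\textbf{Step 2: identify the link graph.} For a triangular presentation $\lle U|\widetilde{R}\rre$, the link graph used in \cite{dLdlS21} has vertex set $U\cup U^{-1}=W_2(S)$. Each relator $abc$ contributes one edge for each of its three corners: $\{a^{-1},b\}$, $\{b^{-1},c\}$ and $\{c^{-1},a\}$.

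With $a=\partial_1,b=\partial_2,c=\partial_3$, these are exactly the three edges of $E_3,E_1,E_2$ introduced for $\tau$ in Definition~\ref{def_triagram_link_graph}. Thus the link graph of $\widetilde{\Gamma}$, counted with multiplicities, is $L(S,\mathcal{T})$. Distinct $\tau$ may yield repeated relators; Remark~5.2(i) of \cite{dLdlS21} covers presentations with multiset relators and multigraph links, so multiplicities are allowed.

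I must check that the conventions match: the orientation of the edges, the treatment of a relator that is a cyclic permutation or inverse of another, and the absence of loops. Cyclic reducedness of $\partial(\tau)$ gives $\partial_2\neq\partial_1^{-1}$ and the analogous conditions, so no loops occur. I expect this bookkeeping to be the main point needing care.

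\textbf{Step 3: apply the criterion.} Theorem~A of \cite{dLdlS21} supplies a constant $\eps(X)>0$ such that a two-sided $\eps$-spectral expander link with $\eps<\eps(X)$ yields $(\mathrm{F}_X)$. It also gives the explicit constant $2p^{-p/2}2^{-p^2/2}$ for $L^p$-spaces with $p\ge 2$, which comes from the bound on $\Delta_{L^p}$. We take the same constant $\eps(X)$ here.
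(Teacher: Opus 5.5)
Your reduction is the paper's. You use the same auxiliary group $\widetilde{\Gamma}=\langle U\mid\widetilde{\partial}(\mathcal{T})\rangle$ and the same map onto a subgroup of index at most $2$; your barycentre argument makes explicit what the paper only asserts. You also match the three corner edges of $\widetilde{\partial}(\tau)$ with $E_3,E_1,E_2$ exactly as the paper does, and appeal to de Laat--de la Salle. The orientation and loop bookkeeping you single out as the delicate point is routine, and you have it right.

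The gap is in Step 3: your assertion that \cite[Remark 5.2(i)]{dLdlS21} already covers multisets of relators and multigraph links. The multiplicities cannot be discarded. The word $\widetilde{\partial}(\tau)$ records only six of the nine letters of $\tau$, so many triagrams give the same word. If $\widetilde{R}$ is taken as a set, the link is no longer $L(S,\mathcal{T})$, and the spectral hypothesis does not transfer.

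The paper does not treat this case as covered by the cited statements. Instead it builds a Cayley-type complex $M$ for $\widetilde{\Gamma}$: for every vertex $m$ and every $t\in\widetilde{\partial}(\mathcal{T})$, it attaches $c(t)=|\{\tau\in\mathcal{T}:\widetilde{\partial}(\tau)=t\}|$ distinct triangles along the path labelled $t$ starting at $m$. This $M$ is connected and locally finite, with a free $\widetilde{\Gamma}$-action, but it need not be simplicial: edges may be loops and faces are repeated. The paper then argues that in \cite{dLdlS21} the complex, the action and the definition of links enter only through \cite[Theorem 4.1]{dLdlS21}, whose proof goes through for $\widetilde{\Gamma}\curvearrowright M$. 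Finally it checks that every vertex link of $M$ is isomorphic to $L(S,\mathcal{T})$. It also notes that the norm condition on $L^2_0(L,\nu)$, with $\nu$ the degree measure, is exactly two-sided $\eps$-expansion.

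That extension of the cited criterion to $M$ is the step you need to supply or cite accurately. As written, your Step 3 rests on a reading of the reference that you have not verified and that the paper itself does not rely on.
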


A~similar criterion, with just one-sided spectral assumption on $L(S,\mathcal{T})$, can be given for property $(\mathrm{T})$ of $\Gamma$.
Specifically, we have the~following theorem, which can be seen as a~version
of a~classical spectral criterion (see e.g.\ \cite[Proposition 6]{zuk03}). We state and prove it for its own sake, as
it will not be used in this article.

\begin{thm}\label{crit_kazhdan}
	Suppose $\Gamma=\lle S|R\rre$ is a~group presentation with a~finite generating set $S$. If~there exists 
	a~set of triagrams $\mathcal{T}\subseteq R^3\cap T(S)$, such that
	the~triagram link graph $L(S,\mathcal{T})$ is a~one-sided $\eps$-spectral expander for some $\eps<\frac{1}{2}$, then $\Gamma$ has Kazhdan's property~$(\textrm{T})$.
\end{thm}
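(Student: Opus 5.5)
We reduce Theorem~\ref{crit_kazhdan} to the classical \.{Z}uk / Ballmann--\'Swi\k{a}tkowski spectral criterion for triangular presentations, using the auxiliary group from Subsection~\ref{outline_subsec}. Choose a partition $W_2(S)=U\sqcup U^{-1}$; this is possible because $st\neq (st)^{-1}$ in $F(S)$ for $st\in W_2(S)$. Let $\widetilde{R}=\{\partial(\tau):\tau\in\mathcal{T}\}$, viewed as cyclically reduced words of length $3$ over $U\cup U^{-1}$, and set $\widetilde{\Gamma}=\lle U|\widetilde{R}\rre$. The homomorphism $\phi\colon\widetilde{\Gamma}\to\Gamma$ that sends $u=st$ to the product $st$ is well defined, since each $\partial(\tau)$ is trivial in $\Gamma$ by the conjugation identity $\partial(\tau)=(y_1y_2)^{-1}(r_2r_3r_1)(y_1y_2)$ with $r_i\in R$. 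Its image is the subgroup $\Gamma_{\mathrm{even}}$ of elements represented by words of even length. This subgroup has index at most $2$: any $s\in S$ satisfies $s^2=s\cdot s$ with $ss\in W_2(S)$, so $s^2\in\Gamma_{\mathrm{even}}$, and $\Gamma=\Gamma_{\mathrm{even}}\cup s\Gamma_{\mathrm{even}}$. Property (T) passes to quotients, so $\Gamma_{\mathrm{even}}$ has (T) as soon as $\widetilde{\Gamma}$ does. It then passes from a finite-index subgroup to the whole group, so $\Gamma$ has (T).

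The main step is to identify $L(S,\mathcal{T})$ with the link graph appearing in \cite[Proposition 6]{zuk03}. For a triangular presentation, \.{Z}uk's link has vertex set $U\cup U^{-1}=W_2(S)$. Each relator $abc$ contributes the edges $\{a^{-1},b\}$, $\{b^{-1},c\}$ and $\{c^{-1},a\}$, coming from the three corners of the triangle, with multiplicity when relators repeat. With $a=\partial_1(\tau)$, $b=\partial_2(\tau)$ and $c=\partial_3(\tau)$, these are exactly the edges that the triagram $\tau$ places in $E_3$, $E_1$ and $E_2$ respectively. Hence the link of $\widetilde{\Gamma}$ coincides with $L(S,\mathcal{T})$, including multiplicities.

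Two points need care here. First, several triagrams may give the same relator, or cyclic permutations of one relator, and the criterion has to be applied to the multiset of relators. The versions in \cite{zuk03} and \cite{bal97} tolerate repeated relators and multigraph links, because they are phrased through the simplicial complex obtained by adding the triangles. Alternatively, one can use the formulation of \cite[Theorem A]{dLdlS21}, whose proof for Hilbert spaces requires only the one-sided bound $\lambda_2<\frac12$. Second, $L(S,\mathcal{T})$ has no loops, since a loop would require $a^{-1}=b$, which is excluded because $\partial(\tau)$ is cyclically reduced. We must also deal with isolated vertices, for which $A_G$ is undefined. Here we read the hypothesis as saying that $L(S,\mathcal{T})$ is a graph with no isolated vertices whose second eigenvalue is at most $\varepsilon$; in particular $\lambda_2\le\varepsilon<1$, so the link is connected.

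Applying \.{Z}uk's criterion, a connected link with $\lambda_2<\frac12$ gives property (T) for $\widetilde{\Gamma}$, and the reduction above transfers (T) to $\Gamma$. The expected obstacle is purely the bookkeeping in the link identification, in particular checking that the orientation conventions ($a^{-1}$ versus $a$ at each corner) match and that multiplicities are counted correctly. The spectral input itself is entirely borrowed from the earlier criterion.
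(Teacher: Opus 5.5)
Your proposal is correct and follows the paper's proof. You pass to $\widetilde{\Gamma}=\lle U|\widetilde{\partial}(\mathcal{T})\rre$, whose image has index at most $2$ in $\Gamma$, identify $L(S,\mathcal{T})$ with \.{Z}uk's link counted with multiplicity, and apply \cite[Proposition 6]{zuk03}, using that $\lambda_2\le\eps<\frac12$ gives connectedness and a normalized Laplacian gap $1-\lambda_2>\frac12$.

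The one point to tighten is how you justify multiplicities. \.{Z}uk's Proposition~6 is stated for a set of relators, so appealing to the complexes in \cite{zuk03} and \cite{bal97} is not accurate. The one-sided reading of \cite[Theorem A]{dLdlS21} is unverified and not needed. Instead, do what the paper does: run \.{Z}uk's construction over the list of relators $\widetilde{\partial}(\tau)$, $\tau\in\mathcal{T}$, with repetitions, and observe that his proof goes through unchanged.
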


To prove Theorems~\ref{crit_unif_curved} and \ref{crit_kazhdan}, we are going to construct a~few auxilliary objects. From now on let $\Gamma = \lle S| R\rre$ be a~group presentation with a~finite generating set $S$ and let $\mathcal{T}\subseteq R^3\cap T(S)$ be a~set of triagrams.
As the involution of $W_2(S)$ sending $w$ to $w^{-1}$ has no fixed points, so there exists 
a~partition $W_2(S) = U \sqcup U^{-1}$ for some (non-unique) subset $U\subseteq W_2(S)$. Let $F(U)$ be the 
free group generated by the set $U$ and let $\phi : F(U) \longrightarrow F(S)$ be the~homomorphism obtained by 
expanding elements of $U$ into words of length 2 over $S\cup S^{-1}$. The image of $\phi$ has index 2
in $F(S)$. For any $w\in W_2(S)$, denote by $\widetilde{w}$ the unique element 
of $U \cup U^{-1} \subseteq F(U)$, for which $\phi\left(\widetilde{w}\right)=w$.
Consider any triagram $\tau\in T(S)$ and define $\widetilde{\partial}(\tau)
=\widetilde{\partial_1(\tau)}\widetilde{\partial_2(\tau)}\widetilde{\partial_3(\tau)}\in F(U)$, so in particular 
$\phi\left(\widetilde{\partial}(\tau)\right)=\partial(\tau)$. 
Since, as discussed earlier, $\partial(\tau)$ can be seen as a~cyclically reduced word of length 3 over $W_2(S)$, so
$\widetilde{\partial}(\tau)$ is a~cyclically reduced word of length 3 over $U\cup U^{-1}$.
Denote $\widetilde{\partial}\left(\mathcal{T}\right) =
\left\{\widetilde{\partial}(\tau):\tau\in\mathcal{T}\right\}$ and let $\widetilde{\Gamma}=\lle U|\widetilde
{\partial}(\mathcal{T})\rre$.
For every $\tau\in\mathcal{T}$, the~word $\partial(\tau)$ represents the 
trivial element of $\Gamma$, so the~map $\phi$ defines a homomorphism of quotients $\widetilde{\phi} : 
\widetilde{\Gamma} \longrightarrow \Gamma$, which has the~image of index 1 or 2 in $\Gamma$.
Hence, to show that $\Gamma$ has property $(\mathrm{F}_X)$ for some uniformly curved Banach space $X$, it suffices
to show that $\widetilde{\Gamma}$ does. The same is true for property $(\mathrm{T})$.

We can now see that Theorem~\ref{crit_kazhdan} is a~direct consequence of \cite[Proposition 6]{zuk03}.
\begin{proof}[Proof of Theorem~\ref{crit_kazhdan}]
In the~notation of \cite[Section 7.1]{zuk03}, choose distinct elements $s_i$, so that $\{s_1, s_2, \ldots, s_k\}=U$, and let
$(R_1, R_2, \ldots, R_n)$ be such a~sequence of elements of $\widetilde{\partial}\left(\mathcal{T}\right)$, in which every $t\in \widetilde{\partial}\left(\mathcal{T}\right)$ appears 
the~number of times equal to $|\{\tau\in\mathcal{T}:\widetilde{\partial}(\tau)=t\}|$, so in particular $n=|\mathcal{T}|$. Let there be no relators of form $R_i'$, so
$\Gamma$ in \cite[Section 7.1]{zuk03} is our $\widetilde{\Gamma}=\lle U|\widetilde{\partial}(\mathcal{T})\rre$. Let us, in the~definition of $L'(S)$, replace the phrase 
\begin{center}
\emph{For every $R\in\{R_1,\ldots,R_n\}$, say $R=s_xs_ys_z$, we add to the graph the edges\ldots}
\end{center}
by 
\begin{center}
\emph{For every $1\leq i\leq n$, write $R_i=s_xs_ys_z$ and add to the graph the edges\ldots}
\end{center} 
Then $L'(S)$ is isomorphic to the~triagram link graph $L(S,\mathcal{T})$. 
We also note that, under this change, \cite[Proposition 6]{zuk03} remains valid with the~same proof.
Hence, if $L=L(S,\mathcal{T})$ is connected and its normalized Laplacian $\Delta$
has the~smallest non-zero eigenvalue larger than~$\frac{1}{2}$, then $\widetilde{\Gamma}$ has property~$(\textrm{T})$. By definition, $\Delta=I-A$, where $I, A\in L^2(W_2(S), d_L)$, $I$ is the identity operator and $A$ 
is the~Markov operator of $L$. We also recall that a~one-sided $\lambda$-spectral expander for $\lambda<1$ 
is necessarily connected. Thus, in other words, if $L$ is a~one-sided $\eps$-spectral expander for some 
$\eps<\frac{1}{2}$, then $\widetilde{\Gamma}$ has property~$(\textrm{T})$.
\end{proof}

To prove Theorem~\ref{crit_unif_curved}, we need one more construction. 
Let $c:\widetilde{\partial}(\mathcal{T})\longrightarrow \mathbb{N}$ be any~function (which we will specify later),
and let us consider the~following 2-dimensional combinatorial complex~$M$, which is the Cayley complex of the presentation $\widetilde{\Gamma}=\lle U|\widetilde{\partial}(\mathcal{T})\rre$, with some of its 2-faces included multiple times.

To construct $M$, first declare its set of vertices to be 
$M_0=\widetilde{\Gamma}$. Next, for every pair
$(m,u) \in M_0 \times U$, introduce an undirected edge $e$ with endpoints $m$ and $mu$. This edge has two 
orientations: by $e_+$ (resp.\ $e_-$) we denote $e$, when oriented from $m$ to $mu$ (resp.\ from $mu$ 
to~$m$) and label $e_+$ (resp.\ $e_-$) by $u$ (resp.\ $u^{-1}$).
Let $M_1$ be the set of all labelled edges of form $e_+$ or $e_-$, obtained this way.
Note that they are allowed to be loops, since elements $u\in U$ may represent the~trivial element of $\widetilde{\Gamma}$.
For $e\in M_1$, denote by $e^{-1}$ the~edge $e$ with inverted orientation.
Finally, to construct the~2-faces of $M$, consider every pair $(m,t)\in M_0\times \widetilde{\partial}(\mathcal{T})$ and write $t=u_1u_2u_3$, where $u_i\in U\cup U^{-1}$ for $1\leq i\leq 3$. Let $(e_1, e_2, e_3)\in M_1^3$ be the~cycle defined by the~conditions: $e_1$ begins at $m$ and $e_i$ is labelled by $u_i$ for $1\leq i\leq 3$. With this notation in mind, attach $c(t)$ new distinct triangles to $M$, so that the boundary of~each of~them is attached along the cycle $(e_1, e_2, e_3)$. Denote by $M_2$ the set of all triangles attached in this step (for any pair $(m,t)$). For every $f\in M_2$, choose $(e_1(f), e_2(f), e_3(f))\in M_1^3$ to be any fixed boundary cycle of $f$ and denote $e_4(f)=e_1(f)$.

Given $m\in M_0$, we define the~\emph{link of $m$ in $M$} as the~undirected graph $L(m)$ with the~vertex set $V(m)=\{e\in E_1 : e\text{ begins at }m\}$ and the edge set satisfying the~condition, that the~number of edges between any $p, q\in V(m)$ equals the number of pairs $(f,i) \in M_2 \times \{1, 2, 3\}$ such that $(e_i(f), e_{i+1}(f))=(p^{-1}, q)$ or $(e_i(f), e_{i+1}(f))=(q^{-1}, p)$. Because of symmetries of $M$,
all graphs $L(m)$, for $m\in M_0$, are isomorphic to each other.

We are now in the~position to prove Theorem~\ref{crit_unif_curved}.

\begin{proof}[Proof of Theorem~\ref{crit_unif_curved}]

Let $X$ be a~uniformly curved Banach space. The group $\widetilde{\Gamma}$ acts freely on the~complex $M$, 
which is connected and locally finite, but may not be simplicial. We claim, however, that \cite[Theorem A with Remark 5.2(i)]{dLdlS21} remain valid for this action, with our definition of links in mind.
To verify that this is the~case, it suffices to note two facts about the~proofs of these results, given in \cite{dLdlS21}:
\begin{itemize}
	\item the only place, where the~properties of the action, the complex, or 
	the definition of links play any role is the use of \cite[Theorem 4.1]{dLdlS21},
	\item the~proof of \cite[Theorem 4.1]{dLdlS21} is correct also for the action $\widetilde{\Gamma}\curvearrowright M$.
\end{itemize}

Hence, there exists a~constant $\eps(X)>0$, depending only on $X$ (and $\eps(X)=2p^{-\frac{p}{2}}2^{-\frac{p^2}{2}}$ if $X$ is an $L^p$--space for $p\geq 2$), such that the~following holds: if every link $L(m)$, for $m\in M_0$, is a~two-sided $\eps$-spectral expander for some $\eps<\eps(X)$, then $\widetilde{\Gamma}$ has property $(\mathrm{F}_X)$.
Here we rephrased the~condition
$\|A_L\|_{B(L_0^2(L,\nu))}<\eps(X)$ by using the~fact that the~measure $\nu$ is defined by the~rescaled 
degree function $d_L$ and so $\|A_L\|_{B(L_0^2(L,\nu))}=\max\left(|\lambda_2(A_L)|, |\lambda_N(A_L)|\right)$.

Now let us define $c:\widetilde{\partial}(\mathcal{T}) \longrightarrow \mathbb{N}$ by
setting $c(t)=|\{\tau\in\mathcal{T}:\widetilde{\partial}(\tau)=t\}|$ for $t\in\widetilde{\partial}\left(\mathcal{T}\right)$. To finish the proof of Theorem~\ref{crit_unif_curved}, it suffices to note
that, for this $c$, every link $L(m)$, for $m\in M_0$, is isomorphic to
the~triagram link graph $L(S,\mathcal{T})$. To see that this is the case, consider any $m\in M_0$.
Labels on edges define a~bijection between $V(m)$ and $U\cup U^{-1}=W_2(S)$.
By using it and comparing the~definition of the link $L(m)$ to Definition~\ref{def_triagram_link_graph}, 
it is straightforward to verify that $L(m)$ is indeed isomorphic to $L(S,\mathcal{T})$.

\end{proof}

\section{Fixed point properties for random square groups}\label{fix_sec}

In this section we use the spectral criterion given by Theorem~\ref{crit_unif_curved} to prove property $(\textrm{T})$ 
and properties of form $(\textrm{F}_X)$ for random groups in the square model $\mathcal{Q}(n,d)$ for $d>\frac{5}{12}$ (Theorems \ref{main_thm_T}, \ref{main_thm_FLp} and \ref{main_thm_F_X}).

\subsection{Passing to binomial square model}
\label{binomial_subsec}
As the~first step, we will verify that our main results will follow once we establish their analogues for random groups in the~binomial square model $\mathcal{Q}(n,\rho)$, which differs from $\mathcal{Q}(n,d)$ in the 
way the~random set of relators is obtained.

\begin{defin}
Fix $\rho : \mathbb{N} \longrightarrow [0,1]$, $n\in\mathbb{N}$ and a~set $S$ of $n$ (free) generators.
A random group~$\Gamma$ in the~\emph{binomial square model} $\mathcal{Q}(n,\rho)$ is given by (and considered with) the~presentation $\Gamma=\lle S | R \rre$, where $R\subseteq W_4(S)$ is a~random set of relators, such that every element of $W_4(S)$ is included in $R$ independently with probability $\rho(n)$.
If $\mathcal{P}$ is a~property of groups or presentations, then we say that a~random group
in the model $\mathcal{Q}(n,\rho)$ satisfies $\mathcal{P}$ \emph{asymptotically almost surely (a.a.s.)} if the~probability that a~random group $\Gamma=\lle S|R\rre$ in $\mathcal{Q}(n,\rho)$ satisfies $\mathcal{P}$ tends to 1 as $n$ tends to infinity.
\end{defin}

When $d\in (0,1)$ and $\rho:\mathbb{N}\rightarrow [0,1]$ are chosen so that $\rho(n)|W_4|=\lf (2n-1)^{4d}\rf$, then the random sets $R$ used in the construction of $\mathcal{Q}(n,d)$ and $\mathcal{Q}(n,\rho)$ have the same expected size and we may expect asymptotic properties of $\mathcal{Q}(n,d)$ and $\mathcal{Q}(n,\rho)$ to be similar. This is true in~our case: Theorems~\ref{main_thm_T}, \ref{main_thm_FLp} and \ref{main_thm_F_X} will follow, by a standard result about random subsets, from the~following theorem.

\begin{thm}\label{binomial_model_main_thm}
Suppose $d\in \left(\frac{5}{12}, \frac{1}{2}\right]$ and $\rho:\mathbb{N}\rightarrow [0,1]$ satisfies $\rho\sim (2n)^{4(d-1)}$. For every uniformly curved Banach space~$X$, a~random group in the~model $\mathcal{Q}(n,\rho)$ a.a.s.\ has property~$(\mathrm{F}_X)$. Additionally, there exists a~constant $f_d>0$, depending only on $d$, such that a~random group in the~model $\mathcal{Q}(n,\rho)$ a.a.s.\ has property $(\mathrm{F}L^p)$ for every
$p\in\left[1, f_d \left(\log{n}\right)^{\frac{1}{2}}\right]$.
\end{thm}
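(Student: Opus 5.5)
The plan is to apply Theorem~\ref{crit_unif_curved} with $\mathcal{T}=R^3\cap T(S)$, the set of all triagrams whose three words lie in $R$. It then suffices to show that for $\rho\sim(2n)^{4(d-1)}$ with $d>\frac{5}{12}$, the graph $L=L(S,\mathcal{T})$ is a.a.s.\ a two-sided $\eps_n$-spectral expander for some explicit $\eps_n\to 0$; concretely we aim for $\eps_n\le n^{-\delta}$ for some $\delta=\delta(d)>0$. Given this, fix a uniformly curved $X$: a.a.s.\ $\eps_n<\eps(X)$, so $\Gamma$ has $(\mathrm{F}_X)$. For $L^p$ with $p\ge 2$ we need $n^{-\delta}<2p^{-p/2}2^{-p^2/2}$. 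Taking logarithms, this holds when $\frac{p^2}{2}\log 2+\frac p2\log p<\delta\log n-\log 2$, which is true for all $p\le f_d(\log n)^{1/2}$ once $f_d$ is small enough. For $p\in[1,2]$ we use that $(\mathrm{F}L^p)$ is equivalent to $(\mathrm{T})$, which is implied by $(\mathrm{F}L^2)$. Since $L^2$ is uniformly curved, this case is covered as well.

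To prove the spectral claim, we first compute the scales. We have $|W_2|\approx 4n^2$, and the expected number of triagrams is about $(2n)^{9}\rho^3\approx (2n)^{12d-3}$. Each vertex $w\in W_2$ therefore has expected degree about $D\approx(2n)^{12d-5}$ in each $L_i$. This tends to infinity exactly when $d>\frac{5}{12}$, which is where the threshold comes from. We then split $L=L_1\cup L_2\cup L_3$. By Remark~\ref{expand_min_max_rem}, if each $L_i$ is a two-sided $\eps$-expander and the degree functions $d_{L_i}$ are all $(1+o(1))D$ uniformly over vertices, then $L$ is a two-sided $(\eps+o(1))$-expander. The reason is that $\langle A_Lf,f\rangle_L=\sum_i\langle A_{L_i}f,f\rangle_{L_i}$, and changing the measures and recentering $f$ costs only the regularity error.

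Regularity comes first. Each degree $d_{L_i}(w)$ is a polynomial of degree $3$ in the independent indicators $\mathds 1_{r\in R}$. Its expectation is polynomial in $n$, and we bound it using Kim--Vu concentration, controlling the expected partial derivatives. The derivatives with respect to one or two fixed relators are governed by counts such as $n^{3}\rho^2$, which are much smaller than $D$ when $d>\frac5{12}$. A union bound over the $O(n^2)$ vertices then gives $d_{L_i}(w)=(1+O(n^{-\delta}))D$ for all $w$ a.a.s.

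The main obstacle is the spectral gap of each $L_i$. Here I would follow Broder--Shamir: pass to the adjacency matrix normalized by $D$, which is close to $A_{L_i}$ by regularity, and bound $\mathbb E\Tr\big((A-\frac{D}{|W_2|}J)^{2k}\big)$ for $k\approx C\log n$. Expanding the trace gives closed walks of length $2k$ in $L_i$, each edge labelled by a triagram, that is, by three relators. The expectation is a sum over walks weighted by $\rho^{\#\text{distinct relators}}$. We group walks by the shape of their relator-coincidence pattern and show that walks in which every step uses fresh relators contribute about $|W_2|\,D^{k}$, reflecting tree-like backtracking walks. Walks with repeated relators lose a factor $n^{-\delta}$ for each coincidence, since $d>\frac5{12}$ makes the extra freedom smaller than the lost factor of $\rho$. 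The delicate part is that one relator can serve in several triagrams and positions, which produces correlations absent from ordinary random graphs. These must be handled by a careful encoding of walks, where each new relator is a free choice and each repeated one costs a bounded label.

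The outcome is $\mathbb E\Tr\le |W_2|\,(C\,n^{-\delta})^{2k}D^{2k}$-type bounds after normalization. Markov's inequality with $k\asymp\log n$ then gives $\max_{i\ge2}|\lambda_i|\le n^{-\delta/2}$ a.a.s.
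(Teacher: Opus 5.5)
Your reductions match the paper's and are sound: $(\mathrm{F}_X)$ and the $(\log n)^{1/2}$ range of $p$ follow from a polynomial spectral bound via Theorem~\ref{crit_unif_curved}, the $L_i$ combine into $L$ by degree regularity, and regularity follows from Kim--Vu. The gap is the spectral estimate for each $L_i$, which is the real content of the theorem. You describe it rather than prove it, and the mechanism you describe does not work as stated.

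\emph{The main term is misidentified.} A closed $2k$-walk in which every step uses fresh relators is a genuine $2k$-cycle. It contributes about $D^{2k}$, which is the trivial eigenvalue. Tree-like walks, by contrast, reuse every triagram.

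\emph{Centering does not let you discard single-use edges.} After subtracting $\frac{D}{|W_2|}J$, Broder--Shamir and F\"uredi--Koml\'os discard walks that use some edge only once, but that relies on independence. The edge weights of $L_i$ are strongly dependent. Distinct triagrams share relators, and a relator can occupy different positions in different triagrams. Moreover, with $\mathcal T=R^3\cap T(S)$ the set $\mathcal T$ is closed under cyclic rotation of triagrams, so $L_1,L_2,L_3$ coincide and each relator triple produces three perfectly correlated edges. Walks that use triagrams once but share relators therefore survive and must be enumerated.

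\emph{Relator coincidences are not individually costly.} Take $\tau\neq\tau'$ with $\partial_2(\tau)^{-1}=\partial_2(\tau')^{-1}$, $\partial_3(\tau)=\partial_3(\tau')$, sharing $r_1$. They agree in $x_1,x_2,x_3,y_1,y_2,z_3$, so $\tau'$ has $3$ free generators instead of $5$, while the pair needs $5$ relators instead of $6$. Relative to two unrelated triagrams on that edge, this is a net factor $(2n)^{-2}\rho^{-1}\approx(2n)^{2-4d}\ge 1$. Such terms are negligible only through a global comparison with the rest of the walk. With $k\asymp\log n$, that comparison must also beat the $k^{O(1)}$ encoding costs per excess, uniformly in $k$. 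None of this bookkeeping is supplied.

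The paper avoids all of it. It writes $R=R_1\cup R_2\cup R_3$ as three independent rounds with $p=\rho_3\sim\rho/3$, and takes $\mathcal T=(R_1\times R_2\times R_3)\cap T(S)\subseteq R^3$. A triagram cycle $\xi$ then has weight exactly $p^{|F^1_\xi|+|F^2_\xi|+|F^3_\xi|}$, with relator sets separated by position.

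Since $d_0\approx n^{\alpha}$ is polynomial in $n$ and only a polynomial gap is needed, a \emph{fixed} $k=\lfloor 4/\alpha\rfloor+1$ without centering suffices. The paper shows $\mathbb E(\Tr(A^{2k})\mathds 1_{\mathcal R})\le 1+O(n^{-\alpha/3})$:
\begin{itemize}
\item the $2^{2k}$ $\Theta$-orbits of generic cycles (all $14k$ generators distinct) contribute at most $d_0^{2k}$, exactly matching the trivial eigenvalue;
\item each of the $O_k(1)$ non-generic orbits is smaller by $(2n)^{-\alpha/3}$.
\end{itemize}
The second point is the global inequality $|S_\xi|\le\frac{7-\alpha}{3}(|F^1_\xi|+|F^2_\xi|+|F^3_\xi|)+2\alpha k-\frac{\alpha}{3}$ of Lemma~\ref{term_bound_lem}. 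It is assembled from (I)--(V) via the auxiliary graphs $H$ and $K$, and the tree case is where $k>4/\alpha$ is needed. Markov's inequality with $\eps=n^{-\alpha^2/31}$ then finishes. Your $k\asymp\log n$ regime and the centering are unnecessary for the theorem, and they are exactly what make your route hard.
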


\begin{proof}[Proof of Theorems~\ref{main_thm_T}, \ref{main_thm_FLp} and \ref{main_thm_F_X}, by using Theorem~\ref{binomial_model_main_thm}]
Let us just prove Theorem~\ref{main_thm_FLp}, as Theorem~\ref{main_thm_F_X} is obtained fully analogously and
Theorem~\ref{main_thm_T} follows from Theorem~\ref{main_thm_FLp}.

If $d>\frac{1}{2}$, then a~random group in the~model $\mathcal{Q}(n,d)$ is a.a.s.\ finite and the~theorem is clear. Suppose now that $d\in\left(\frac{5}{12}, \frac{1}{2}\right]$ and let $f_d$ be as in the~Theorem~\ref{binomial_model_main_thm}.
For $n\geq 1$, let $S_n$ be a~set of size $n$ and let
\begin{equation*}
\mathcal{P}_n = \left\{ R\subseteq W_4(S_n) : \lle S_n|R \rre\text{ has property }(\mathrm{F}L^p)\text{ for every }p\in \left[1, f_d\left(\log{n}\right)^{\frac{1}{2}}\right]\right\}.
\end{equation*}
A~quotient of~a~group with property $(\mathrm{F}L^p)$ is also a~group with property $(\mathrm{F}L^p)$, so 
the~family $\mathcal{P}_n$ is increasing, i.e.\ if $R_1\in \mathcal{P}_n$ and $R_1\subseteq R_2\subseteq W_4(S_n)$, then also $R_2\in \mathcal{P}_n$.
Define $\rho : \mathbb{N}\rightarrow [0,1]$ by $\rho(n)=\frac{\left\lfloor (2n-1)^{4d}\right\rfloor}{|W_4(S_n)|}$.
Since $|W_4(S_n)|\sim (2n)^4$, so $\rho\sim (2n)^{4(d-1)}$.
Denote by $R$ (resp.\ ~$R'$) a~random set of relators over $S=S_n$, appearing in the~definition of~the~model $\mathcal{Q}(n,\rho)$ (resp.\ ~$\mathcal{Q}(n,d)$). 
Theorem~\ref{binomial_model_main_thm} now says that $\mathbb{P}\left(R \in \mathcal{P}_n\right)\rightarrow 1$ as $n\rightarrow \infty$. What we want to 
show is equivalent to $\mathbb{P}\left(R'\in \mathcal{P}_n\right)\rightarrow 1$ as $n\rightarrow \infty$. This follows directly from 
\cite[Corollary 1.16(i)]{jan00} applied for $\Gamma=W_4$, $M=\left\lfloor (2n-1)^{4d}\right\rfloor$ and $\mathcal{Q}=\mathcal{P}_n$, since
then, in the notation of the~Corollary, $N=|W_4(S_n)|$ and $R$ (resp.\ $R'$) is $\Gamma_{M/N}$ (resp.\ $\Gamma_M$) up to distribution.
\end{proof}

From now on we focus on random groups in the binomial square model and our goal is to prove Theorem~\ref{binomial_model_main_thm}.
For convenience, we are going to use the~following equivalent construction of~the~random set of relators $R$ in the model $\mathcal{Q}\left(n,\rho\right)$.

\begin{rem}\label{three_round_relators_rem}
Given $\rho : \mathbb{N} \rightarrow [0,1]$, define $\rho_3 : \mathbb{N}\rightarrow [0,1]$ by $\rho_3(n)=1-(1-\rho(n))^{\frac{1}{3}}$ for all 
$n\in\mathbb{N}$. 
If $\rho(n)=o(1)$, then $\rho_3(n)\sim \frac{1}{3}\rho(n)$, by the mean value theorem.
Suppose $S$ is a~set of size $n$.
Let $R$ be a~random subset of~$W_4(S)$, such that every $w\in W_4(S)$ 
is included in~$R$ independently with probability $\rho(n)$. Let $R_1, R_2, R_3$ be independent subsets of $W_4(S)$ such that, for every $1\leq i\leq 3$, every $w\in W_4(S)$ is included in $R_i$ independently with probability $\rho_3(n)$. 
Then, for every $w\in W_4(S)$, 
\begin{equation*}
	\mathbb{P}\left(w\notin R_1\cup R_2\cup R_3\right) = (1-\rho_3(n))^3 = 1-\rho(n) = \mathbb{P}\left(w\notin R\right).
\end{equation*}
Hence, the sets $R$ and $R_1 \cup R_2 \cup R_3$ have the~same 
distribution as random subsets of $W_4(S)$ and so the random presentation $\lle S | R_1 \cup R_2 \cup R_3\rre$ has the 
same distribution as the random presentation obtained in the binomial model $\mathcal{Q}(n, \rho)$. 
\end{rem}

\subsection{Random triagram link graph as an expander}
We aim to show Theorem~\ref{binomial_model_main_thm} by using the spectral criterion given by Theorem~\ref{crit_unif_curved}.
To that end, we will show that random groups in the~binomial square model (for appropriate parameters) a.a.s.\ admit triagram link graphs 
that are two-sided $\eps$-spectral expanders for $\eps\rightarrow 0$. Precisely, we will prove the following.

\begin{prop}\label{full_expander_prop}
For every $d\in\left(\frac{5}{12}, \frac{1}{2}\right]$, there exists a~constant $e_d>0$ with the~following property.
Suppose that $\rho:\mathbb{N}\longrightarrow[0,1]$ satisfies $\rho \sim (2n)^{4(d-1)}$ and let $\Gamma = \lle S|R\rre$ be a random group in the model
$\mathcal{Q}(n,\rho)$. A.a.s.\ there exists a~set of triagrams $\mathcal{T}\subseteq R^3$, such that the triagram link graph $L(S, \mathcal{T})$ is a~two-sided $\eps$-spectral expander for $\eps=n^{-e_d}$.
\end{prop}

\begin{rem}
We will prove this for $e_d = \frac{\alpha^2}{32}$, where $\alpha = 12d-5$. We claim no optimality of this value.
\end{rem}

Let us see that Proposition~\ref{full_expander_prop} indeed suffices to show Theorem~\ref{binomial_model_main_thm}.

\begin{proof}[Proof of Theorem~\ref{binomial_model_main_thm}, using Proposition~\ref{full_expander_prop}]

Fix $d\in \left(\frac{5}{12},\frac{1}{2}\right]$ and let $e_d$ be a~number satisfying Proposition~\ref{full_expander_prop}.
For the first part, let $X$ be a~uniformly curved Banach space and let $\eps(X)$ be a~constant satisfying Theorem~\ref{crit_unif_curved}. 
For sufficiently large $n$, we have $n^{-e_d}<\eps(X)$, so $\mathcal{Q}(n,\rho)$ has property $(\mathrm{F}_X)$ a.a.s.

For the second part, let $X$ be an $L^p$--space. As noted in~the~introduction, properties of form $(\mathrm{F}L^p)$,
for $p\in[1,2]$, are equivalent to each other, so it suffices to consider only $p\geq 2$.

Theorem~\ref{crit_unif_curved} holds for $\eps(X)=2p^
{-\frac{p}{2}}2^{-\frac{p^2}{2}}$, so a.a.s.\ a~random group in the~model $\mathcal{Q}(n, \rho)$ has property $(\mathrm{F}L^p)$ for 
every $p\geq 2$, such that $n^{-e_d} < 2p^{-\frac{p}{2}}2^{-\frac{p^2}{2}}$. This inequality is equivalent to
\begin{equation*}
-e_d \log{n} < \log{2} -\frac{p}{2}\log{p}-\frac{p^2}{2}\log{2}.
\end{equation*}
For some universal constant $C>0$, this holds whenever $e_d \log{n} \geq Cp^2$.
Hence it suffices to choose
$f_d = \left(\frac{e_d}{C}\right)^{\frac{1}{2}}$.
	 
\end{proof}

We now turn to proving Proposition~\ref{full_expander_prop}. Hereafter, until the end of Section~\ref{asymptotic_regularity_sec}, we make
the following assumptions. Let us fix a~density $d\in\left(\frac{5}{12}, \frac{1}{2}\right]$ and $\rho : \mathbb{N} \longrightarrow [0,1]$, satisfying 
$\rho\sim (2n)^{4(d-1)}$. Let $\Gamma = \lle S|R\rre$ be a~random group in the model $\mathcal{Q}(n,\rho)$.
Define $\alpha\in(0,1]$ by $\alpha=12d-5$, so that $d=\frac{5+\alpha}{12}$.
For given $\rho$, let the~function $p=\rho_3$ and random sets $R_1, R_2, R_3\subseteq W_4(S)$ be defined as in Remark~\ref{three_round_relators_rem}, so that $p\sim \frac{1}{3}(2n)^{4(d-1)} \sim \frac{1}{3}(2n)^{\frac{\alpha}{3}-\frac{7}{3}}$
and w.l.o.g.\ we can assume that $R=R_1\cup R_2\cup R_3$.
Implicitly, $R_1, R_2, R_3$ are all defined on a~single probability space ---
we denote by $\mathbb{P}$ (resp.\ $\mathbb{E}$) the~probability function (resp.\ expected value operator)
corresponding to that space.

Recall that by $T(S)$ we denote the~set of all triagrams over $S$. We set
\begin{equation*}
\mathcal{T} = \left(R_1 \times R_2 \times R_3\right) \cap T(S)
\end{equation*}
and we are going to prove Proposition~\ref{full_expander_prop} for this choice of $\mathcal{T}$. Independence of 
$R_1, R_2$ and $R_3$ means that $\mathbb{P}\left(\tau\in\mathcal{T}\right)=p^3$ for every $\tau \in T(S)$. 
Finally, let $L=L(S,\mathcal{T})$ and $L_i=L_i(S, \mathcal{T})$, for $1\leq i\leq 3,$ be the triagram link graph and its parts, given by Definition~\ref{def_triagram_link_graph}. 
Note that each of $L_1, L_2, L_3$ has the same distribution
as a~random graph, because the sets $R_1, R_2, R_3$ are identically  independently distributed.

We aim to prove that $L$ is a.a.s.\ a~two-sided
$\eps$-spectral expander for $\eps=n^{-e_d}$, with $e_d>0$
depending only on $d$. 
As the next step, we show that it will follow once
we establish that each~$L_i$, for $1\leq i\leq3$, satisfies the same kind of property as we want for $L$ (Lemma~\ref{part_expander_lem}), together with a~certain asymptotic regularity condition (Lemma~\ref{regularity_lem}). The precise
formulations of the lemmas we use are as follows.
Recall that each of the graphs $L, L_1, L_2, L_3$ has $W_2(S)$ as its vertex set.
\begin{lem}\label{part_expander_lem}
Let $\eps=n^{-\frac{\alpha^2}{31}}$.
For $1\leq i\leq 3$, the graph $L_i$ is a.a.s.\ a~two-sided $\eps$-spectral expander.
\end{lem}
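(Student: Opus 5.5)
We would prove Lemma~\ref{part_expander_lem} for $L_1$ only; the cases $i=2,3$ follow by the cyclic symmetry of Definition~\ref{def_triagram_link_graph} and the fact that $R_1,R_2,R_3$ are i.i.d. Unwinding the definitions, an edge of $L_1$ is a triagram $\tau=(r_1,r_2,r_3)$ with $r_1=x_1x_2x_3x_4\in R_1$, $r_2=x_4^{-1}y_2y_3y_4\in R_2$ and $r_3=y_4^{-1}z_2z_3x_1^{-1}\in R_3$. It joins $a=x_2^{-1}z_3^{-1}$ to $b=x_3y_2$. There are $\sim(2n)^9$ triagrams and each lies in $\mathcal{T}$ with probability $p^3\sim\frac{1}{27}(2n)^{\alpha-7}$. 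So $N=|W_2(S)|\sim(2n)^2$, and the expected degree is $D\asymp(2n)^{\alpha}$, roughly uniform over vertices.

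Let $B$ be the weighted adjacency matrix of $L_1$ and $\bar B=\mathbb{E}B$. Up to lower-order corrections coming from the reducedness constraints, $\bar B$ is $\frac{D}{N}J$, i.e.\ rank one along $\pmb{1}$. We split $B=\bar B+(B-\bar B)$.

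\textbf{Step 1 (spectral bound on the fluctuation).} We aim to show that a.a.s.
\begin{equation*}
\|B-\bar B\|\le D\, n^{-\alpha^2/30}.
\end{equation*}
For this we use the trace method in the spirit of \cite{bro87}. For a constant $k=k(\alpha)$ we bound $\mathbb{E}\Tr\big((B-\bar B)^{2k}\big)$ and then apply Markov's inequality together with $\|M\|^{2k}\le\Tr(M^{2k})$.

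Expanding the trace, we get a sum over closed walks $a_0,a_1,\dots,a_{2k}=a_0$ in which each step is labelled by a triagram $\tau_j$. The summand is $\mathbb{E}\prod_j(\mathds{1}_{\tau_j\in\mathcal{T}}-p^3)$. Each $\mathds{1}_{\tau\in\mathcal{T}}$ is a product of three independent relator indicators, so the expectation vanishes unless every relator used by the walk either appears in at least two of the $\tau_j$, or the corresponding triagram is repeated. Otherwise centering kills the term.

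We would then classify the walks by their \emph{shape}: the pattern of coincidences among the $3\cdot 2k$ relators and the $2\cdot 2k$ two-letter vertex labels. For each shape we bound the number of free letters (a factor $(2n)$ each) against the number of distinct relators (a factor $p\approx(2n)^{\alpha/3-7/3}$ each).

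The key point is that the relators are highly shared. A single relator $r_1\in R_1$ lies in about $(2n)^{1/3+2\alpha/3}$ triagrams of $\mathcal{T}$, so $L_1$ is far from a union of independent edges. This is why we do not expect the Ramanujan-type bound $\sqrt D$, and why a loss giving an exponent like $\alpha^2$ appears. Concretely, we want to show that every shape contributes at most $N\,D^{2k}(2n)^{-c\alpha^2 k}$ for some absolute $c$. Because the number of shapes depends only on $k$, choosing $k\approx C/\alpha$ large makes the factor $N\approx(2n)^2$ negligible. Markov's inequality then gives the desired operator-norm bound with probability $1-o(1)$.

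\textbf{Main obstacle.} This combinatorial shape count is the step we expect to be hardest. The delicate shapes are those in which relators are reused across many steps while still contributing free letters. We would first try to bound them by building the walk step by step: at each step, count how many new letters a new triagram can introduce given how many of its three relators are new. We expect $\alpha>0$, that is $d>\frac{5}{12}$, to be exactly the condition that makes every step with a new relator ``pay for itself''.

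\textbf{Step 2 (transfer to the Markov operator).} By Lemma~\ref{regularity_lem} (asymptotic regularity), a.a.s.\ all degrees satisfy
\begin{equation*}
d_{L_1}(x)=D\,(1+O(n^{-\alpha^2/30})).
\end{equation*}
For $f\in L^2_0(V,d_{L_1})$ this gives $\lle Bf,f\rre=\lle A_{L_1}f,f\rre_{L_1}$ and $\|f\|_{L_1}^2\approx D\|f\|_2^2$.

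The term $\lle\bar Bf,f\rre$ is small. Indeed, $f$ is almost orthogonal to $\pmb 1$ in the unweighted inner product, because the weights $d_{L_1}$ are nearly constant, and $\bar B$ is essentially $\frac{D}{N}J$.

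Combining this with Step 1 and Remark~\ref{expand_min_max_rem}, we get $|\lle A_{L_1}f,f\rre_{L_1}|\le n^{-\alpha^2/31}\|f\|_{L_1}^2$ for large $n$. This is exactly the statement of Lemma~\ref{part_expander_lem}.
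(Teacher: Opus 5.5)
\textbf{There is a genuine gap in Step 1, which carries the whole lemma.} Your route differs from the paper's: you center the adjacency matrix, bound $\|B-\bar B\|$ by a trace estimate, and then transfer the bound to $A_{L_1}$ via regularity. Step 2 is essentially routine and fine. Step 1, however, is not proved. The per-shape bound $N D^{2k}(2n)^{-c\alpha^2 k}$ is only announced ("we want to show", "we expect"), and the reduction you set up for it is incorrect.

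Since $\mathds{1}_{\tau\in\mathcal{T}}=X_{r_1}Y_{r_2}Z_{r_3}$ is a product of three independent indicators, a term $\mathbb{E}\prod_j(\mathds{1}_{\tau_j\in\mathcal{T}}-p^3)$ does not vanish just because one relator is unshared. Suppose $r_2,r_3$ of $\tau_j$ occur nowhere else but $r_1$ does. Averaging out $Y_{r_2},Z_{r_3}$ turns the factor into $p^2(X_{r_1}-p)$, which is correlated with the remaining factors. So terms survive whenever every $\tau_j$ shares at least \emph{one} relator with another step. This class is much larger than "every relator appears at least twice", and your shape count would have to handle it, together with the cancellations coming from the $-p^3$ terms.

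There is also an internal inconsistency in the parameters. With a saving of only $(2n)^{-c\alpha^2 k}$ per shape, choosing $k\approx C/\alpha$ cannot absorb the factor $N\approx(2n)^2$ when $\alpha$ is small; you would need $k\gtrsim\alpha^{-2}$.

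\textbf{How the paper avoids this.} It does not center at all. It applies the trace method to the Markov operator itself, using $r(A)^{2k}\le\Tr(A^{2k})-1$, and proves $\mathbb{E}\big(\Tr(A^{2k})\mathds{1}_{\mathcal R}\big)\le 1+O(n^{-\alpha/3})$ for $k=\lfloor 4/\alpha\rfloor+1$. The argument rests on three counting facts:
\begin{itemize}
\item Counting is done over $\Theta$-orbits of triagram cycles, and there are only $O_k(1)$ such orbits.
\item The $2^{2k}$ generic orbits, where all $14k$ letters are distinct, contribute at most $2^{2k}(2n)^{14k}p^{6k}=d_0^{2k}$. This is exactly the share of the trivial eigenvalue.
\item Every non-generic orbit loses a factor $(2n)^{-\alpha/3}$. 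This is the real combinatorial content. The number of distinct letters is bounded by the numbers of distinct relators via inequalities (I)--(V). These use the auxiliary graphs $H$ on $A_\xi$ and $K$ on $B_\xi\cup B'_\xi$, and a tree/non-tree dichotomy; the tree case uses backtracking together with $k>4/\alpha$.
\end{itemize}

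The exponent $\alpha^2/31$ then comes simply from taking the $2k$-th root of the saving $n^{-\alpha/3}$ with $k\le 5/\alpha$. It does not come from relator sharing, as you suggest. To complete your approach you would need an analogous letter-versus-relator count for the centered expansion, and that counting argument is the missing idea.
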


\begin{lem}\label{regularity_lem}
Let $d_0=2(2n)^7p^3$, $\delta=n^{-\frac{\alpha}{3}}$, and $1\leq i\leq 3$. Denote by $\mathcal{R}_{i, \delta}$ the~event that
\begin{equation*}
|d_{L_i}(w)-d_0|\leq \delta d_0\text{ for every }w\in W_2(S).
\end{equation*}

Then, for every $\beta>0$,
\begin{equation*}
\mathbb{P}(\mathcal{R}_{i, \delta})=1-o_{\beta, \rho}\left(n^{-\beta}\right).
\end{equation*}
\end{lem}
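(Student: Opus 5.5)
The plan is to show that each degree $d_{L_i}(w)$ is a polynomial of low degree in independent Bernoulli variables with expectation $d_0(1+O(1/n))$, and to apply the concentration inequality of Kim--Vu \cite{kim00}. A union bound over the $|W_2(S)|=O(n^2)$ vertices then finishes the argument. By the symmetry noted before the lemma (the sets $R_1,R_2,R_3$ are i.i.d.), it suffices to treat $i=1$.

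\emph{Step 1: expectation.} Fix $w\in W_2(S)$. By Definition~\ref{def_triagram_link_graph}, $d_{L_1}(w)$ equals
\[
\#\{\tau\in\mathcal{T}:\partial_2(\tau)^{-1}=w\}+\#\{\tau\in\mathcal{T}:\partial_3(\tau)=w\}.
\]
A loop, where both conditions hold, contributes $2$, consistent with this count. Take the second term and write $w=x_3y_2$. A triagram with these two letters fixed is determined by the free letters $x_1,x_2,x_4,y_3,y_4,z_2,z_3$, since $y_1,z_1,z_4$ are then forced. Each free letter has $2n$ choices, minus $O(1)$ excluded values coming from cyclic reducedness and from the conditions $y_3\neq z_2^{-1}$, $z_3\neq x_2^{-1}$. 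Hence there are $(2n)^7(1+O(1/n))$ such triagrams. Each one lies in $\mathcal{T}$ with probability $p^3$, because $r_1,r_2,r_3$ lie in the independent sets $R_1,R_2,R_3$. The first term is handled the same way. Therefore $\mathbb{E}\,d_{L_1}(w)=d_0(1+O(1/n))$. Since $p\sim\frac13(2n)^{\frac{\alpha-7}{3}}$, we get $d_0=\Theta(n^{\alpha})$, and the $O(d_0/n)$ error is far below $\delta d_0/2$.

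\emph{Step 2: Kim--Vu.} Introduce the indicator variables $\xi^{j}_r=\mathds{1}_{r\in R_j}$ for $r\in W_4(S)$ and $1\le j\le3$. There are $N=O(n^4)$ of them, all independent. Then
\[
Y=d_{L_1}(w)=\sum_{\tau}c_\tau\,\xi^1_{r_1}\xi^2_{r_2}\xi^3_{r_3},
\]
with $c_\tau\in\{0,1,2\}$. This is a multilinear polynomial of degree $3$ with bounded coefficients, and the three factors in each monomial are distinct variables. We bound the expected partial derivatives as follows.
\begin{itemize}
\item Differentiating in one variable, say $\xi^1_{r_1}$, fixes $x_1,\dots,x_4$. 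At most four letters remain free, so $\mathbb{E}\,\partial Y=O\big((2n)^4p^2\big)=O\big(n^{\frac{2\alpha-2}{3}}\big)=O(1)$. Fixing $r_2$ or $r_3$ instead gives the same bound.
\item Differentiating in two variables leaves at most two free letters, so the bound is $O\big((2n)^2p\big)=O\big(n^{\frac{\alpha-1}{3}}\big)=O(1)$.
\item Third derivatives are $O(1)$ trivially.
\end{itemize}
Hence, in Kim--Vu notation, $E'=\max_{k\ge1}E_k=O(1)$ and $E_0=\max(\mathbb{E}Y,E')=O(n^\alpha)$. Kim--Vu gives
\[
\mathbb{P}\Big(|Y-\mathbb{E}Y|>a_3\sqrt{E_0E'}\,\lambda^{3}\Big)\le O\big(e^{-\lambda+2\log N}\big).
\]
Choose $\lambda=(\beta+20)\log n$. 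The deviation is then $O\big(n^{\alpha/2}(\log n)^3\big)=o\big(n^{-\alpha/3}d_0\big)$, and the probability is $O\big(n^{-\beta-4}\big)$.

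\emph{Step 3: union bound.} Summing over the $O(n^2)$ vertices $w$ gives $\mathbb{P}(\mathcal{R}_{1,\delta}^c)=o(n^{-\beta})$. The implied constants depend only on $\beta$ and on the convergence $\rho\sim(2n)^{4(d-1)}$.

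The main obstacle is Step 2, specifically the bookkeeping for the partial derivatives. One must check every way of fixing one or two relators, including coincidences such as the same word occurring in several positions or the loop case. The letter-count must show that at most $4$ letters (respectively $2$) remain free, because this is exactly what uses $\alpha\le1$ to make $E'=O(1)$. Step 1 is routine counting.
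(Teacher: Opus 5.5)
Your proposal is correct and follows essentially the same route as the paper. You compute the expectation $\mathbb{E}\,d_{L_1}(w)=d_0(1+O(1/n))$ by counting triagrams with two letters fixed. You then apply Kim--Vu to the degree-$3$ polynomial in the indicators of $R_1,R_2,R_3$, where the same letter counts give $E_1=O\bigl((2n)^4p^2\bigr)$ and $E_2=O\bigl((2n)^2p\bigr)$, both $O(1)$ because $\alpha\le 1$. A union bound over $W_2(S)$ finishes the argument, exactly as in the paper.

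The only real difference is the choice of $\lambda$. You take $\lambda=(\beta+20)\log n$ and get a polynomial tail for each fixed $\beta$, whereas the paper takes $\lambda\asymp n^{\alpha/18}$ and gets a stretched-exponential bound valid for all $\beta$ at once. Also, loops never occur: cyclic reducedness of $r_1$ gives $x_3\neq x_2^{-1}$, hence $\partial_2(\tau)^{-1}\neq\partial_3(\tau)$, so all coefficients $c_\tau$ are $1$.
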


We prove Lemma~\ref{part_expander_lem} in 
Subsection~\ref{part_expander_proof_subsec} (relying on Lemma~\ref{regularity_lem}) and we postpone the proof of~Lemma~\ref{regularity_lem} until Section~\ref{asymptotic_regularity_sec}.
Assuming these results, Proposition~\ref{full_expander_prop}
can be proved as follows.

\begin{proof}[Proof of Proposition~\ref{full_expander_prop}, using Lemmas~\ref{part_expander_lem} and~\ref{regularity_lem}]
A.a.s.\ each of the graphs $L_1$, $L_2$, $L_3$ is a~two-sided 
$\eps$-spectral expander for $\eps=n^{-e_d'}$, where $e_d' = \frac{\alpha^2}{31}$,
and all the events $\mathcal{R}_{1,\delta}$, $\mathcal{R}_{2,\delta}$, $\mathcal{R}_{3,\delta}$ occur for
 $\delta=n^{-\frac{\alpha}{3}}$.
Since $e_d' < \frac{2}{3}\alpha$, we have $\delta^2<\eps$.
By~Lemma~\ref{regularity_lem}, we have
$d_{L_i}(w)=d_0\left(1+O_\text{abs}\left(\delta\right)\right)$ for $1\leq i\leq 3$ and $w\in W_2(S)$. Hence 
$d_L(w)=d_{L_1}(w)+d_{L_2}(w)+d_{L_3}(w)=3d_0\left(1+O_\text{abs}\left(\delta\right)\right)$. Aiming to apply Remark~\ref{expand_min_max_rem}, suppose that
$f\in L^2_0(W_2(S), d_L)$ is any function. Consider any $1\leq i\leq 3$.
Let $D_i=\sum_{w\in W_2(S)} d_{L_i}(w)$ and $u_i=D_i^{-1/2}\pmb{1}$,
so that $\|u_i\|_{L_i}=1$. Orthogonally decompose $f=c_iu_i+g_i$, where $c_i\in \mathbb{C}$ and 
$g_i\in L^2_0(W_2(S), d_{L_i})$. Define $h_i\in L^2(W_2(S),d_{L_i})$ 
by $h_i(w) = 1-\frac{d_L(w)}{3d_{L_i}(w)}$ for $w\in W_2(S)$.
We have 
\begin{equation*}
\begin{split}
c_i &= \lle f, u_i\rre_{L_i}=D_i^{-1/2}\sum_{w\in W_2(S)} f(w)d_{L_i}(w)\\
&=D_i^{-1/2}\sum_{w\in W_2(S)}f(w)\left(d_{L_i}(w)-\frac{1}{3}d_L(w)\right)
+\frac{1}{3}D_i^{-1/2}\lle f, \pmb{1}\rre_L\\
&=D_i^{-1/2}\sum_{w\in W_2(S)}f(w)\overline{h_i(w)}d_{L_i}(w)
=D_i^{-1/2}\lle f, h_i\rre_{L_i},
\end{split}
\end{equation*}
so that, by the Cauchy-Schwarz inequality, $|c_i|\leq D_i^{-1/2} \|f\|_{L_i} \|h_i\|_{L_i}$. 
Note that $h_i(w)=O_\alpha\left(\delta\right)$, so $\|h_i\|_{L_i} = O_\alpha\left(\delta D_i^{1/2}\right)$
and $|c_i|=O_\alpha\left(\delta\|f\|_{L_i}\right)$. By orthogonality, $\|f\|^2_{L_i}=|c_i|^2 + \|g_i\|_{L_i}^2$, hence $\|g_i\|_{L_i}\leq\|f\|_{L_i}$.

$A_{L_i}$ is self-adjoint and has $\pmb{1}$ as an eigenvector, so $L_0^2(W_2(S), d_{L_i})$ is its invariant
subspace and

\begin{equation*}
\lle A_{L_i}f, f \rre_{L_i} = \lle A_{L_i}(c_iu_i), c_iu_i \rre_{L_i} + \lle A_{L_i}g_i, g_i \rre_{L_i}
=|c_i|^2 + \lle A_{L_i}g_i, g_i \rre_{L_i}.
\end{equation*}
Since $L_i$ is a two-sided $\eps$-spectral expander, by Remark~\ref{expand_min_max_rem} we have $|\lle A_{L_i}g_i, g_i \rre_{L_i}| \leq \eps \|g_i\|_{L_i}^2$. 
Remembering that $\delta^2<\eps$, in summary we obtain
\begin{equation*}
|\lle A_{L_i}f, f \rre_{L_i}| \leq O_\alpha\left(\delta^2\|f\|^2_{L_i}\right)+\eps \|g_i\|_{L_i}^2
= O_\alpha\left(\eps\|f\|_{L_i}^2\right).
\end{equation*}
Combining this, for $1\leq i\leq 3$, yields
\begin{equation*}
\begin{split}
|\lle A_Lf, f\rre_L| &= \left|\sum_{w_1, w_2 \in W_2(S)} \omega_L(w_1,w_2)f(w_1)\overline{f(w_2)}\right|=\left|\sum_{i=1}^3\sum_{w_1, w_2 \in W_2(S)} \omega_{L_i}(w_1,w_2)f(w_1)\overline{f(w_2)}\right|\\
&=\left|\sum_{i=1}^3\lle A_{L_i}f, f\rre_{L_i}\right|=O_\alpha\left(\eps\sum_{i=1}^3\|f\|^2_{L_i}\right)
=O_\alpha\left(\eps\|f\|_L^2\right).
\end{split}
\end{equation*}
Now let $e_d=\frac{\alpha^2}{32}$, so that $e_d < e_d'$ and $\eps=n^{-e'_d}=o_\alpha\left(n^{-e_d}\right)$. This means that a.a.s.\ we have $|\lle A_Lf, f\rre_L| \leq n^{-e_d}\|f\|_L^2$
for every $f\in L_0^2(W_2(S), d_L)$. By Remark~\ref{expand_min_max_rem}, $L$ is hence a.a.s.\ a~two-sided $n^{-e_d}$-spectral expander.
\end{proof}

\subsection{The graphs $L_i$ as expanders}\label{part_expander_proof_subsec}
In this subsection we prove Lemma~\ref{part_expander_lem}, while assuming regularity given by
Lemma~\ref{regularity_lem}. W.l.o.g.\ let $i=1$, so our aim is to show that a.a.s.\ $L_1$
is a~two-sided $\eps$-spectral expander for $\eps=n^{-e_d'}$,
where $e'_d=\frac{\alpha^2}{31}$.
Let $\delta=n^{-\frac{\alpha}{3}}$ and $\mathcal{R} = \mathcal{R}_{1,\delta}$ be as in Lemma~\ref{regularity_lem}. 

Define the~random operator $A$
on $L^2(W_2(S), d_{L_1})$ as follows. If $\mathcal{R}$ holds, then $L_1$ does not have isolated vertices
and so let ${A=A_{L_1}}$ be the Markov operator of $L_1$ and if $\mathcal{R}$ does not hold, then let $A=I$ be the identity operator (as a~convenient placeholder). In both cases, $A$ is self-adjoint and has $N$ real eigenvalues
$\lambda_1(A)\geq\lambda_2(A)\geq\ldots\geq\lambda_N(A)$, where $N=|W_2(S)|$.
Denote
\begin{equation*}
r(A) = \max_{i=2,\ldots, N} |\lambda_i(A)| = \max\left(|\lambda_2(A)|, |\lambda_N(A)|\right).
\end{equation*}
If $\mathcal{R}$ holds, then $L_1$ is a~two-sided $\eps$-spectral expander if and only if $r(A)\leq \eps$. Following \cite{bro87}, we are going to bound $r(A)$ by using the~trace of a~power of $A$.
Let $k\geq 1$ be any natural number. As $\lambda_1(A)=1$, we have
\begin{equation*}
\Tr(A^{2k})=\sum_{i=1}^N|\lambda_i(A)|^{2k} \geq 1 + r(A)^{2k},
\end{equation*}
so that
\begin{equation}\label{trace_radius_ineq}
	r(A)^{2k}\leq \Tr(A^{2k})-1.
\end{equation}
Denote by $B_\eps$ the event that $L_1$ is not a~two-sided $\eps$-spectral expander. We have:
\begin{equation}\label{PBeps_starting_bound_ineq}
\begin{split}
    \mathbb{P}\left(B_\eps\right)
    &\leq\mathbb{P}\left(\mathcal{R}\text{ does not hold or }r(A)> \eps\right)\\
    &=\mathbb{P}\left(\mathcal{R}^c\right)+\mathbb{P}\left(r(A)\mathds{1}_\mathcal{R}>\eps\right)\\
    &=\mathbb{P}\left(\mathcal{R}^c\right)+\mathbb{P}\left(r(A)^{2k}\mathds{1}_\mathcal{R}>\eps^{2k}\right).
\end{split}
\end{equation}
By the Markov's inequality, 
$\mathbb{P}\left(r(A)^{2k}\mathds{1}_\mathcal{R}>\eps^{2k}\right)\leq 
\eps^{-2k}\mathbb{E}\left(r(A)^{2k}\mathds{1}_\mathcal{R}\right)$. Because of this 
and~(\ref{trace_radius_ineq}),
inequality (\ref{PBeps_starting_bound_ineq}) leads to:
\begin{equation*}
\begin{split}
    \mathbb{P}\left(B_\eps\right)
    &\leq\mathbb{P}\left(\mathcal{R}^c\right)+\eps^{-2k}\mathbb{E}\left(r(A)^{2k}\mathds{1}_\mathcal{R}\right)\\
    &\leq\mathbb{P}\left(\mathcal{R}^c\right)+\eps^{-2k}\mathbb{E}\big(\left(\Tr(A^{2k})-1\right)\mathds{1}_\mathcal{R}\big).
\end{split}
\end{equation*}
By rewriting $\mathbb{E}\left(\mathds{1}_\mathcal{R}\right)=\mathbb{P}\left(\mathcal{R}\right)=1-\mathbb{P}\left(\mathcal{R}^c\right)$ and bounding $1\leq \eps^{-2k}$, we get:
\begin{equation}\label{PBeps_final_bound_ineq}
\begin{split}
    \mathbb{P}\left(B_\eps\right)
    &\leq(1+\eps^{-2k})\mathbb{P}\left(\mathcal{R}^c\right)+\eps^{-2k}\mathbb{E}\left(\Tr(A^{2k})\mathds{1}_\mathcal{R}\right)-\eps^{-2k}\\
	&\leq 2\eps^{-2k}\mathbb{P}\left(\mathcal{R}^c\right)
	+\eps^{-2k}\mathbb{E}\left(\Tr(A^{2k})\mathds{1}_\mathcal{R}\right)-\eps^{-2k}.
\end{split}
\end{equation}

Let us see that now, to prove Lemma~\ref{part_expander_lem},
it is sufficient to show the following bound on $\mathbb{E}\left(\Tr(A^{2k})\mathds{1}_\mathcal{R}\right)$.

\begin{lem}\label{trace_of_power_lem}
If $k=\left\lfloor \frac{4}{\alpha}\right\rfloor+1$, then
\begin{equation*}
\mathbb{E}\left(\Tr(A^{2k})\mathds{1}_\mathcal{R}\right)\leq 1+O_\rho\left(n^{-\frac{\alpha}{3}}\right).
\end{equation*}
\end{lem}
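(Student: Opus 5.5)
The plan is to expand the trace as a sum over closed walks and use the regularity event $\mathcal{R}$ to replace the random degrees by the deterministic $d_0$. On $\mathcal{R}$ every vertex has $d_{L_1}(w)=d_0(1+O(\delta))$. Hence
\begin{equation*}
\Tr(A^{2k})\mathds{1}_\mathcal{R}\leq (1+O(\delta))^{2k}d_0^{-2k}\sum_{(w_0,\ldots,w_{2k-1},w_0)}\prod_{j}\omega_{L_1}(w_j,w_{j+1}),
\end{equation*}
where the sum runs over closed walks of length $2k$ in $W_2(S)$. Since $k$ depends only on $\alpha$, the factor $(1+O(\delta))^{2k}$ is $1+O_\alpha(n^{-\alpha/3})$. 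The product of weights counts sequences $(\tau_1,\ldots,\tau_{2k})$ of triagrams in $\mathcal{T}$, with $\tau_j$ realising the $j$-th step, i.e.\ $\{\partial_2(\tau_j)^{-1},\partial_3(\tau_j)\}=\{w_{j-1},w_j\}$. Dropping the indicator is harmless because all terms are nonnegative. So it suffices to bound
\begin{equation*}
d_0^{-2k}\sum \mathbb{P}(\tau_1,\ldots,\tau_{2k}\in\mathcal{T}),
\end{equation*}
summing over all closed ``triagram walks''. Each probability equals $p^{m_1+m_2+m_3}$, where $m_i$ is the number of distinct words among the $i$-th coordinates (in $R_i$) of the $\tau_j$.

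Next I would classify walks by the pattern of coincidences among the $6k$ relator slots and the letters involved, in the spirit of Broder--Shamir. A triagram edge of $L_1$ between $(z_3x_2)^{-1}$ and $x_3y_2$ is determined by $r_1=x_1x_2x_3x_4$, $r_2=y_1y_2y_3y_4$, $r_3=z_1z_2z_3z_4$ with the gluing constraints. Given its two endpoints, the step has roughly $(2n)^{7}\cdot 2$ choices of free letters, which matches $d_0=2(2n)^7p^3$.

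The leading term comes from walks that are ``tree-like'': every edge (triagram) is traversed exactly twice, backtracking, with all new relators distinct. These contribute $N\cdot C_k\cdot$ (degree count)$^k p^{3k}/d_0^{2k}$. I expect this to equal the contribution of the constant eigenvector only up to a lower-order term; in fact I would rather compare with $\mathbb{E}$ of the tree-like count normalised, which gives $N\cdot d_0^{-k}\cdot O_k(1)$. Since $N\approx(2n)^2$ and $d_0\approx (2n)^{7}p^3\approx (2n)^{\alpha}$, this is $\approx n^{2-\alpha k}$, which is $o(n^{-2})$ because $k>4/\alpha$. The term ``$1$'' will come from walks whose triagrams collapse heavily: walks repeatedly using shared relators produce many coincidences but lose free letters. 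I would show that each coincidence class with $v$ free letters and $m$ distinct relators contributes $O_k\bigl((2n)^{v}p^{m}d_0^{-2k}\bigr)$. I would then prove the combinatorial inequality $v\log(2n)+m\log p\leq 2k\log d_0 - \tfrac{\alpha}{3}\log n$ for all non-exceptional patterns. The exceptional patterns, those attaining equality, are the ones reflecting stationary mass summing to $1+o(1)$.

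The main obstacle is this last counting step. One must show that for each of the $O_k(1)$ coincidence patterns the exponent of $n$ is at most $-\alpha/3$, except for the single dominant pattern that gives exactly $1$. The exponent is computed via an Euler-characteristic/van Kampen argument on the diagram formed by the glued squares; the density threshold $5/12$ is exactly where squares sharing letters stop being profitable. I would first handle walks where all $6k$ relators are distinct, then argue that each identification of relators reduces the number of free letters by at least as much as it saves in $p$ (using $p\approx n^{\alpha/3-7/3}$ and that a shared length-4 relator fixes at least $\lceil 7/3\rceil$-worth of letters when $d\le 1/2$). Since $k$ is fixed, there are boundedly many patterns, so the error is $O_\rho(n^{-\alpha/3})$.
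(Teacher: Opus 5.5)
Your reduction is the paper's: on $\mathcal{R}$, bound $\Tr(A^{2k})$ by $(1-\delta)^{-2k}d_0^{-2k}$ times the number of closed triagram walks in $\mathcal{T}$, weight each walk by $p^{|F^1_\xi|+|F^2_\xi|+|F^3_\xi|}$, and group the walks into $O_k(1)$ coincidence patterns. But you misidentify the main term, and the decisive counting is left unproved.

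\textbf{The main term.} The $1$ does not come from heavily collapsed walks; your own heuristic that relator identifications cost more letters than they save in $p$ already rules that out. It comes from the \emph{generic} walks, which have no coincidences at all: $2k$ distinct vertices, $2k$ distinct triagrams, $6k$ distinct relators and $14k$ distinct generators up to inversion. Up to relabelling generators there are exactly $2^{2k}$ of them, one for each choice at each step of which endpoint of $\{\partial_2(\tau_i)^{-1},\partial_3(\tau_i)\}$ is $u_i$. Each class has at most $(2n)^{14k}$ elements, and $2^{2k}(2n)^{14k}p^{6k}=\bigl(2(2n)^7p^3\bigr)^{2k}=d_0^{2k}$ exactly. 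This identity is what makes the main term at most $1$, and it has to be established rather than attributed to ``stationary mass.''

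\textbf{The non-generic bound.} The real content of the lemma is that every non-generic pattern is smaller by $(2n)^{-\alpha/3}$, i.e.\ $|S_\xi|\le\frac{7-\alpha}{3}\bigl(|F^1_\xi|+|F^2_\xi|+|F^3_\xi|\bigr)+2\alpha k-\frac{\alpha}{3}$. Your per-identification accounting cannot prove this, because letters are shared. The letters $x_1,y_1,z_1$ each lie in two relators of a triagram, and the vertex letters are shared between consecutive steps.
For example, a tree-like walk makes $3k$ relator identifications but loses only $14k-(7k+2)=7k-2$ generators, which is $\frac{7}{3}-\frac{2}{3k}$ per identification. So no bound of the form ``each shared relator fixes at least $7/3$ (let alone $\lceil 7/3\rceil$) letters'' holds. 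Even the weaker ``loses at least what it gains in $p$'' holds only on average, and only when $k>2/\alpha$; for $k=1$ a single backtrack beats the generic term by a factor of order $(2n)^{2-\alpha}$. A global argument is needed.

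\textbf{How the paper does it.}
\begin{enumerate}
\item It splits $S_\xi$ by position into $A_\xi$, $B_\xi\cup B'_\xi$, $C_\xi$, $D_\xi$.
\item It bounds $|C_\xi|$ and $|D_\xi|$ by the $|F^l_\xi|$, using that $x_1,y_1,z_1$ each occur in two relators.
\item It bounds $|A_\xi|$ and $|B_\xi\cup B'_\xi|$ via two auxiliary graphs: $H$, with vertices $A_\xi$ and edges $F^1_\xi$, and $K$, with vertices $B_\xi\cup B'_\xi$ and edges $F^2_\xi\sqcup F^3_\xi$. These carry closed walks of length $2k$ and $4k$ through all their vertices. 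If such a graph is not a tree, it has at least as many edges as vertices. If it is a tree, every edge is traversed twice, and $k>4/\alpha$ absorbs the deficit.
\item Any strict inequality among these is strict by at least $\frac{\alpha}{3}$, which yields the $O_\rho(n^{-\alpha/3})$.
\end{enumerate}
Your Euler-characteristic idea, as sketched, does not control mixed situations such as $H$ a tree and $K$ not, which is exactly where this case analysis is needed.
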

\begin{proof}[Proof of Lemma~\ref{part_expander_lem}, by
using Lemma~\ref{regularity_lem} and Lemma~\ref{trace_of_power_lem}]

By~Lemma~\ref{regularity_lem}, $\mathbb{P}\left(\mathcal{R}^c\right)=o_\rho(n^{-1})$, so in particular also
$\mathbb{P}\left(\mathcal{R}^c\right)=O_\rho\left(n^{-\frac{\alpha}{3}}\right)$, since $\alpha\in(0,1]$.
From~(\ref{PBeps_final_bound_ineq}) and Lemma~\ref{trace_of_power_lem}, we get:

\begin{equation*}
\begin{split}
    \mathbb{P}\left(B_\eps\right)
    &\leq 2\eps^{-2k}\mathbb{P}\left(\mathcal{R}^c\right)
	+\eps^{-2k}\mathbb{E}\left(\Tr(A^{2k})\mathds{1}_\mathcal{R}\right)-\eps^{-2k}\\
    &\leq2\eps^{-2k}O_\rho\left(n^{-\frac{\alpha}{3}}\right)
	+\eps^{-2k}\left(1
	+O_\rho\left(n^{-\frac{\alpha}{3}}\right)\right)-\eps^{-2k}\\
    &\leq \eps^{-2k}O_\rho\left(n^{-\frac{\alpha}{3}}\right)
    =O_\rho\left(n^{\frac{2k\alpha^2}{31}-\frac{\alpha}{3}}\right).
\end{split}
\end{equation*}
As $k\leq \frac{5}{\alpha}$, so:
\begin{equation*}
\frac{2k\alpha^2}{31}-\frac{\alpha}{3} \leq \frac{10\alpha}{31}-\frac{\alpha}{3}<0.
\end{equation*}
Hence $\mathbb{P}\left(B_\eps\right)\rightarrow 0$ as $n\rightarrow \infty$.
\end{proof}

\subsection{Bounding the expected value of the trace}
In this section we prove Lemma~\ref{trace_of_power_lem}.
We fix $k=\left\lfloor \frac{4}{\alpha}\right\rfloor +1$.

Assume that $\mathcal{R}$ holds. Denote by $(e_u)_{u\in W_2(S)}$ 
the standard basis of $L^2(W_2(S), d_{L_1})$, given by $e_{u}(v) 
= \delta_{uv}$ for $u,v\in W_2(S)$. Operator $A$ in this basis 
is represented by the matrix $\left(a_{uv}\right)_{u, v\in 
W_2(S)}$, where $a_{uv}=\frac{\omega_{L_1}(u,v)}{d_{L_1}(u)}$. 
We have $d_{L_1}(u) \geq (1-\delta)d_0$, so ${a_{uv}\leq(1-\delta)^{-1}d_0^{-1}
\omega_{L_1}(u,v)}$ for $u,v\in W_2(S)$. Hence:
\begin{equation}\label{trace_method_eq}
\begin{split}
\Tr\left(A^{2k}\right)&=\sum_{u_1, u_2,\ldots, u_{2k} \in W_2(S)} a_{u_1u_2}a_{u_2u_3}\cdots a_{u_{2k}u_1}\\
&\leq (1-\delta)^{-2k}d_0^{-2k}\sum_{u_1, u_2,\ldots, u_{2k} \in W_2(S)} \omega_{L_1}(u_1, u_2)\omega_{L_1}(u_2, u_3)\cdots\omega_{L_1}(u_{2k},u_1).
\end{split}
\end{equation}
Consider any $\pmb{u}=(u_1,u_2,\ldots,u_{2k})\in W_2(S)^{2k}$ and denote $u_{2k+1}=u_1$. Denote by $E(L_1)$ the~set of~edges of $L_1$. The product
\begin{equation*}
	m=\omega_{L_1}(u_1, u_2)\omega_{L_1}(u_2, u_3)\cdots\omega_{L_1}(u_{2k},u_1)
\end{equation*}
is the number of cycles $(e_1,e_2,\ldots,e_{2k})\in E(L_1)^{2k}$, such that $e_i$ is an edge between $u_i$ and $u_{i+1}$, for $1\leq i \leq 2k$. Equivalently, by the construction of~$L_1$ (Definition~\ref{def_triagram_link_graph}), $m$ is the number of sequences of triagrams $\pmb{\tau}=(\tau_1,\tau_2,\ldots,\tau_{2k})\in\mathcal{T}^{2k}$, such that
\begin{equation}\label{compatibility_eq}
\left\{\partial_2(\tau_i)^{-1}, \partial_3(\tau_i)\right\}=\{u_i, u_{i+1}\}\quad\text{for }1\leq i\leq 2k.
\end{equation} 
If (\ref{compatibility_eq}) holds for given $\pmb{u}\in W_2(S)^{2k}$ and $\pmb{\tau}\in T(S)^{2k}$, then we say that
$\pmb{u}$ and $\pmb{\tau}$ are \emph{compatible}.
Denote $\Xi_{2k}=\left\{(\pmb{u},\pmb{\tau})\in W_2(S)^{2k}\times T(S)^{2k} : \pmb{u}\text{ and }\pmb{\tau}\text{ are compatible}\right\}$.
Call an element of $\Xi_{2k}$ a \emph{triagram cycle}.
It is not hard to see that for every $\pmb{\tau}\in T(S)^{2k}$ there exist at most 2 sequences $\pmb{u}\in W_2(S)^{2k}$ such that $(\pmb{u},\pmb{\tau})\in\Xi_{2k}$, but we will not need this observation.

We can now rewrite the inequality (\ref{trace_method_eq}) as
\begin{equation*}
\begin{split}
\Tr(A^{2k}) &\leq (1-\delta)^{-2k}d_0^{-2k}\sum_{\pmb{u}\in W_2(S)^{2k}}\big|\left\{\pmb{\tau}\in \mathcal{T}^{2k} : 
(\pmb{u},\pmb{\tau})\in\Xi_{2k}\right\}\big|\\
&= (1-\delta)^{-2k}d_0^{-2k}\sum_{(\pmb{u},\pmb{\tau})\in \Xi_{2k}} \mathds{1}_{\pmb{\tau} \in \mathcal{T}^{2k}}.
\end{split}
\end{equation*}

This is true, whenever $\mathcal{R}$ holds, so
\begin{equation}\label{sum_of_prob_eq}
\begin{split}
\mathbb{E}\left(\Tr(A^{2k})\mathds{1}_\mathcal{R}\right) &\leq (1-\delta)^{-2k}d_0^{-2k}\mathbb{E}\left(\sum_{(\pmb{u}, \pmb{\tau})\in \Xi_{2k}} \mathds{1}_{\pmb{\tau} \in \mathcal{T}^{2k}}\mathds{1}_\mathcal{R}\right)\\
&= (1-\delta)^{-2k}d_0^{-2k}\sum_{(\pmb{u}, \pmb{\tau})\in \Xi_{2k}} \mathbb{E}\big(\mathds{1}_{\pmb{\tau} \in \mathcal{T}^{2k}}\mathds{1}_\mathcal{R}\big)\\
&\leq (1-\delta)^{-2k}d_0^{-2k}\sum_{(\pmb{u}, \pmb{\tau})\in \Xi_{2k}} \mathbb{P}\left(\pmb{\tau}\in\mathcal{T}^{2k}\right).
\end{split}
\end{equation}
From now on, given $\xi\in\Xi_{2k}$, we fix the following notation. 
Write $\xi=(\pmb{u}, \pmb{\tau})$, where $\pmb{u}=(u_1,u_2,\ldots,u_
{2k})$ and $\pmb{\tau}=(\tau_1,\tau_2,\ldots, \tau_{2k})$, and 
denote $u_{2k+1}=u_1$. For $1\leq i\leq 2k$, write $\tau_i=(r_1^i,
r_2^i,r_3^i)=(x_1^ix_2^ix_3^i(y_1^i)^{-1}, y_1^iy_2^iy_3^i(z_1^i)^
{-1}, z_1^iz_2^iz_3^i(x_1^i)^{-1})$, where $r_l^i\in W_4(S)$ for $1\leq l\leq 3$, and $x_j^i, y_j^i, z_j^i\in S\cup S^{-1}$ for $1\leq j\leq 3$. For $1\leq l\leq 3$, let $F_\xi^l=\{r_l^i: 1\leq i\leq 2k\}$ be the set of relators appearing on the $l$-th position in the triagrams of $\pmb{\tau}$. As the random sets $R_1, R_2, R_3$ are independent, we have
\begin{equation}\label{probbysets_eq}
\mathbb{P}\left(\pmb{\tau}\in \mathcal{T}^{2k}\right)=p^{\left|F_\xi^1\right|}p^{\left|F_\xi^2\right|}p^{\left|F_\xi^3\right|}=p^{\left|F_\xi^1\right|+\left|F_\xi^2\right|+\left|F_\xi^3\right|}.
\end{equation} 

Let $|\cdot| : S\cup S^{-1} \longrightarrow S$ be the function given by $|s|=|s^{-1}|=s$ for $s\in S$. We note the following simple inequalities.

\begin{rem}\label{F_xi_ineq_rem}
Suppose $\xi\in\Xi_{2k}$. Then, the~following inequalities hold:
\begin{equation*}
\begin{split}
\left|\left\{\left|x_1^i\right| : 1\leq i\leq 2k\right\}\right|
&\leq\min\left(|F_\xi^3|, |F_\xi^1|\right),\\
\left|\left\{\left|y_1^i\right| : 1\leq i\leq 2k\right\}\right|
&\leq\min\left(|F_\xi^1|, |F_\xi^2|\right),\\
\left|\left\{\left|z_1^i\right| : 1\leq i\leq 2k\right\}\right|
&\leq\min\left(|F_\xi^2|, |F_\xi^3|\right).\\
\end{split}
\end{equation*}
Additionally, for $2\leq j\leq 3$, we have:
\begin{equation*}
\begin{split}
&\left|\left\{\left|x_j^i\right| : 1\leq i\leq 2k\right\}\right|\leq|F_\xi^1|,\\
&\left|\left\{\left|y_j^i\right| : 1\leq i\leq 2k\right\}\right|\leq|F_\xi^2|,\\
&\left|\left\{\left|z_j^i\right| : 1\leq i\leq 2k\right\}\right|\leq|F_\xi^3|.
\end{split}
\end{equation*}
\end{rem}

\begin{proof}
For every $1\leq j\leq 3$ and $1\leq i\leq 2k$, the~generator $x_j^i$ is uniquely determined by the~word $r_1^i
=x_1^ix_2^ix_3^i(y_1^i)^{-1}$, so $\left|\left\{\left|x_j^i\right| : 1\leq i\leq 2k\right\}\right|\leq|F_\xi^1|$.
At the same time, for every $1\leq i\leq 2k$, $x_1^i$ is uniquely determined by $r_3^i=z_1^iz_2^iz_3^i(x_1^i)^{-1}$,
hence for $j=1$ we have a stronger bound:
$\left|\left\{\left|x_1^i\right|:1\leq i\leq 2k\right\}\right|\leq \min\left(|F_\xi^3|, |F_\xi^1|\right)$.
The remaining inequalities can be proved analogously.
\end{proof}

We are going to group the terms of the last sum in (\ref{sum_of_prob_eq}) by using symmetries arising from permutations of the generators. Denote by $\Sigma_{2n}$ the symmetric group on the set $S\cup S^{-1}$ and let 
\begin{equation*}
\Theta=\left\{\theta\in \Sigma_{2n} : \theta\left(x^{-1}\right)=\theta(x)^{-1}\text{ for all }x\in S\cup S^{-1}\right\}
\end{equation*}
be the subgroup of permutations preserving the inverses. Every $\theta\in\Theta$ extends to a unique~automorphism of the 
free group $F(S)$, giving an~action $\Theta\curvearrowright F(S)$. This action preserves the sets $W_2(S)$ and~$W_4(S)$,
so we can define an action $\Theta\curvearrowright T(S)$, by setting $\theta(r_1, r_2, r_2)= (\theta(r_1),\theta(r_2),\theta(r_2))$ 
for $(r_1, r_2, r_3)\in T(S)$ and $\theta\in\Theta$. It is easy to see that the functions $\partial_1,\partial_2,\partial_3:T(S) \rightarrow W_2(S)$ 
are $\Theta$-equivariant. Finally, we have an~action $\Theta\curvearrowright \Xi_{2k}$, given by 
$\theta((u_1,u_2,\ldots,u_{2k}), (\tau_1, \tau_2, \ldots, \tau_{2k}))=
((\theta(u_1),\theta(u_2),\ldots,\theta(u_{2k})), (\theta(\tau_1), \theta(\tau_2), \ldots, \theta(\tau_{2k})))$
for $((u_1,u_2,\ldots,u_{2k}), (\tau_1, \tau_2, \ldots, \tau_{2k}))\in \Xi_{2k}$ and $\theta\in\Theta$. The fact that all these actions are well-defined is straightforward to verify. Given $\xi\in\Xi_{2k}$, denote by $\Theta(\xi)$ the orbit of $\xi$ under the action of $\Theta$.

If $\xi_1=(\pmb{u}_1, \pmb{\tau}_1), \xi_2=(\pmb{u}_2, \pmb{\tau}_2)\in\Xi_{2k}$ lie in the same $\Theta$-orbit, then $|F_{\xi_1}^l|=|F_{\xi_2}^l|$ for $1\leq l\leq 3$, so for any $\xi\in\Xi_{2k}$ the value of $p^{\left|F_{\xi}^1\right|+\left|F_{\xi}^2\right|+\left|F_{\xi}^3\right|}$ depends only on the $\Theta$-orbit of $\xi$.
Fix $\Xi_{2k}^0$ to be a set of representatives of the orbits of the action $\Theta\curvearrowright\Xi_{2k}$. Using (\ref{probbysets_eq}), we can rewrite 
the inequality~(\ref{sum_of_prob_eq}) as follows.

\begin{equation}\label{orbit_sum_eq}
\begin{split}
\mathbb{E}\left(\Tr(A^{2k})\mathds{1}_\mathcal{R}\right)
&\leq (1-\delta)^{-2k}d_0^{-2k}\sum_{\xi\in\Xi_{2k}} p^{\left|F_\xi^1\right|+\left|F_\xi^2\right|+\left|F_\xi^3\right|}\\
&= (1-\delta)^{-2k}d_0^{-2k}\sum_{\xi\in\Xi_{2k}^0} \left|\Theta(\xi)\right|p^{\left|F_{\xi}^1\right|+\left|F_{\xi}^2\right|+\left|F_{\xi}^3\right|}
\end{split}
\end{equation}

Let us introduce some more notation regarding a~given $\xi\in\Xi_{2k}$. Define the following sets:

\begin{equation*}
\begin{split}
A_\xi &= \left\{\left|x_2^i\right|:1\leq i\leq 2k\right\}\cup \left\{\left|x_3^i\right|:1\leq i\leq 2k\right\},\\
B_\xi &= \left\{\left|y_2^i\right|:1\leq i\leq 2k\right\}
\cup\left\{\left|z_3^i\right|:1\leq i\leq 2k\right\},\\
B'_\xi &= \left\{\left|z_1^i\right|:1\leq i\leq 2k\right\},\\
C_\xi &= \left\{\left|x_1^i\right|:1\leq i\leq 2k\right\}\cup\left\{\left|y_1^i\right|:1\leq i\leq 2k\right\},\\
D_\xi &= \left\{\left|y_3^i\right|:1\leq i\leq 2k\right\}\cup\left\{\left|z_2^i\right|:1\leq i\leq 2k\right\}.
\end{split}
\end{equation*}
Each of $A_\xi, B_\xi, B'_\xi, C_\xi, D_\xi$ is the~set of generators in $S$ which appear at specific positions (in either 
orientation) in words of triagrams $\tau_i$, where $1\leq i\leq 2k$, as seen in Figure~\ref{setsABCD_fig}. 
\begin{figure}[H]
	\centering
	\begin{tikzpicture}[line cap = round, line join = round]

\def\midarr{0.5}
\def\boundarr{0.5}
\tikzset{boundedgestyle/.style={
        thick,
        decoration={
            markings,
            mark=at position \boundarr with {\arrow{angle 45}}
        },
        postaction={decorate}
}}
\tikzset{midedgestyle/.style={
        thick,
        decoration={
            markings,
            mark=at position \midarr with {\arrow{angle 45}}
        },
        postaction={decorate}
}}
\tikzset{noarrowstyle/.style={
        thick,
        decoration={
            markings
        },
        postaction={decorate}
}}

\def\l{1}

\def\s{1}
\def\t{1.732}
\def\w{0.577}
\def\txt{0.25}

\def\q{0}
\def\p{0}
\def\r{0}

\def\toffset{1}
\def\uoffset{1.5}

\def\xa{\l*\p}
\def\ya{\l*\q-\l*2*\w*\r}

\draw[midedgestyle] (\xa, \ya) -- (\xa-\l*\t, \ya-\l*\s);
\node at (\xa-\boundarr*\l*\t+\txt*0.5*\l*\s, \ya-\boundarr*\l*\s-\txt*0.5*\t*\l) {\small $x_1^i$};

\draw[boundedgestyle] (\xa-\l*\t, \ya-\l*\s) -- (\xa, \ya-2*\l*\s);
\node at (\xa-\l*\t+\boundarr*\l*\t-\txt*0.5*\l*\s, \ya-\l*\s-\boundarr*\l*\s-\txt*0.5*\t*\l) {\small $x_2^i$};

\draw[boundedgestyle] (\xa-\toffset*\l*\t, \ya-2*\l*\s) -- 
(\xa-\l*\t-\toffset*\l*\t, \ya-\l*\s);
\node at (\xa-\l*\t-\toffset*\l*\t+\boundarr*\l*\t-\txt*0.5*\l*\s, 
\ya-\l*\s-\boundarr*\l*\s-\txt*0.5*\l*\t) {\small $t_1^{i_1}$};
\draw[boundedgestyle] (\xa-\l*\t-\toffset*\l*\t, \ya-\l*\s) --
(\xa-\l*\t-\toffset*\l*\t, \ya+\l*\s);
\node at (\xa-\l*\t-\toffset*\l*\t-\txt*\l*\s, 
\ya-\l*\s+\boundarr*2*\l*\s) {\small $t_2^{i_1}$};
\draw[dotted] (\xa, \ya-2*\l*\s) -- (\xa-\toffset*\l*\t, \ya-2*\l*\s) -- (\xa-\l*\t-\uoffset*\l*\t, \ya-2*\l*\s);
\draw[dotted] (\xa-\l*\t, \ya-\l*\s) -- 
(\xa-\l*\t-\toffset*\l*\t, \ya-\l*\s); 
\draw[dotted]  (\xa-\l*\t, \ya+\l*\s) -- (\xa-\l*\t-\toffset*\l*\t, \ya+\l*\s) -- (\xa-\l*\t-\uoffset*\l*\t, \ya+\l*\s);

\draw[boundedgestyle] (\xa-\l*\t-\uoffset*\l*\t, \ya-2*\l*\s) -- (\xa-\l*\t-\uoffset*\l*\t, \ya+\l*\s);
\node at (\xa-\l*\t-\uoffset*\l*\t-1.2*\txt*\l*\s, \ya-0.5*\l*\s) {\small $u_{i_1}$};

\draw[boundedgestyle] (\xa+\toffset*\l*\t, \ya-2*\l*\s) -- 
(\xa+\l*\t+\toffset*\l*\t, \ya-\l*\s);
\node at (\xa+\l*\t+\toffset*\l*\t-\boundarr*\l*\t+\txt*0.5*\l*\s, 
\ya-\l*\s-\boundarr*\l*\s-\txt*0.5*\l*\t) {\small $t_1^{i_2}$};
\draw[boundedgestyle] (\xa+\l*\t+\toffset*\l*\t, \ya-\l*\s) --
(\xa+\l*\t+\toffset*\l*\t, \ya+\l*\s);
\node at (\xa+\l*\t+\toffset*\l*\t+\txt*\l*\s, 
\ya-\l*\s+\boundarr*2*\l*\s) {\small $t_2^{i_2}$};

\draw[boundedgestyle] (\xa+\l*\t+\uoffset*\l*\t, \ya-2*\l*\s) -- (\xa+\l*\t+\uoffset*\l*\t, \ya+\l*\s);
\node at (\xa+\l*\t+\uoffset*\l*\t+1.2*\txt*\l*\s, \ya-0.5*\l*\s) {\small $u_{i_2}$};
\draw[dotted] (\xa, \ya-2*\l*\s) -- (\xa+\toffset*\l*\t, \ya-2*\l*\s) -- (\xa+\l*\t+\uoffset*\l*\t, \ya-2*\l*\s);
\draw[dotted] (\xa+\l*\t, \ya-\l*\s) -- 
(\xa+\l*\t+\toffset*\l*\t, \ya-\l*\s); 
\draw[dotted]  (\xa+\l*\t, \ya+\l*\s) -- (\xa+\l*\t+\toffset*\l*\t, \ya+\l*\s) -- (\xa+\l*\t+\uoffset*\l*\t, \ya+\l*\s);

\draw[boundedgestyle] (\xa, \ya-2*\l*\s) -- (\xa+\l*\t, \ya-\l*\s);
\node at (\xa+\boundarr*\l*\t+\txt*0.5*\l*\s, \ya-2*\l*\s+\boundarr*\l*\s-\txt*0.5*\t*\l) {\small $x_3^i$};

\def\xb{\l*\p+\l*\r}
\def\yb{\l*\q+\l*\w*\r}

\draw[midedgestyle] (\xb, \yb) -- (\xb+\l*\t, \yb-\l*\s);
\node at (\xb+\boundarr*\l*\t+\txt*0.5*\l*\s, \yb-\boundarr*\l*\s+\txt*0.5*\t*\l) {\small $y_1^i$};

\draw[boundedgestyle] (\xb+\l*\t, \yb-\l*\s) -- (\xb+\l*\t, \yb+\l*\s);
\node at (\xb+\l*\t+\txt*\l*\s, \yb-\l*\s+\boundarr*2*\l*\s) {\small $y_2^i$};

\draw[boundedgestyle] (\xb+\l*\t, \yb+\l*\s) -- (\xb, \yb+2*\l*\s);
\node at (\xb+\l*\t-\boundarr*\l*\t+\txt*0.5*\l*\s, \yb+\l*\s+\boundarr*\l*\s+\txt*0.5*\t*\l) {\small $y_3^i$};

\def\xc{\l*\p-\l*\r}
\def\yc{\l*\q+\l*\w*\r}

\draw[midedgestyle] (\xc, \yc) -- (\xc, \yc+2*\l*\s);
\node at (\xc-\txt*\l*\s, \yc+\boundarr*2*\l*\s) {\small $z_1^i$};

\draw[boundedgestyle] (\xc, \yc+2*\l*\s) -- (\xc-\l*\t, \yc+\l*\s);
\node at (\xc-\boundarr*\l*\t-\txt*0.5*\l*\s, \yc+2*\l*\s-\boundarr*\l*\s+\txt*0.5*\t*\l) {\small $z_2^i$};

\draw[boundedgestyle] (\xc-\l*\t, \yc+\l*\s) -- (\xc-\l*\t, \yc-\l*\s);
\node at (\xc-\l*\t-\txt*\l*\s, \yc+\l*\s-\boundarr*2*\l*\s) {\small $z_3^i$};

\def\q{0}
\def\p{8}
\def\r{0}

\def\xa{\l*\p}
\def\ya{\l*\q-\l*2*\w*\r}
\draw[noarrowstyle] (\xa, \ya) -- (\xa-\l*\t, \ya-\l*\s);
\node at (\xa-\boundarr*\l*\t+\txt*0.5*\l*\s, \ya-\boundarr*\l*\s-\txt*0.5*\t*\l) {\small $C_\xi$};
\draw[noarrowstyle] (\xa-\l*\t, \ya-\l*\s) -- (\xa, \ya-2*\l*\s);
\node at (\xa-\l*\t+\boundarr*\l*\t-\txt*0.5*\l*\s, \ya-\l*\s-\boundarr*\l*\s-\txt*0.5*\t*\l) {\small $A_\xi$};
\draw[noarrowstyle] (\xa, \ya-2*\l*\s) -- (\xa+\l*\t, \ya-\l*\s);
\node at (\xa+\boundarr*\l*\t+\txt*0.5*\l*\s, \ya-2*\l*\s+\boundarr*\l*\s-\txt*0.5*\t*\l) {\small $A_\xi$};

\def\xb{\l*\p+\l*\r}
\def\yb{\l*\q+\l*\w*\r}
\draw[noarrowstyle] (\xb, \yb) -- (\xb+\l*\t, \yb-\l*\s);
\node at (\xb+\boundarr*\l*\t-\txt*0.5*\l*\s, \yb-\boundarr*\l*\s-\txt*0.5*\t*\l) {\small $C_\xi$};
\draw[noarrowstyle] (\xb+\l*\t, \yb-\l*\s) -- (\xb+\l*\t, \yb+\l*\s);
\node at (\xb+\l*\t+\txt*\l*\s, \yb-\l*\s+\boundarr*2*\l*\s) {\small $B_\xi$};
\draw[noarrowstyle] (\xb+\l*\t, \yb+\l*\s) -- (\xb, \yb+2*\l*\s);
\node at (\xb+\l*\t-\boundarr*\l*\t+\txt*0.5*\l*\s, \yb+\l*\s+\boundarr*\l*\s+\txt*0.5*\t*\l) {\small $D_\xi$};

\def\xc{\l*\p-\l*\r}
\def\yc{\l*\q+\l*\w*\r}
\draw[noarrowstyle] (\xc, \yc) -- (\xc, \yc+2*\l*\s);
\node at (\xc+\txt*\l*\s, \yc+\boundarr*2*\l*\s) {\small $B'_\xi$};
\draw[noarrowstyle] (\xc, \yc+2*\l*\s) -- (\xc-\l*\t, \yc+\l*\s);
\node at (\xc-\boundarr*\l*\t-\txt*0.5*\l*\s, \yc+2*\l*\s-\boundarr*\l*\s+\txt*0.5*\t*\l) {\small $D_\xi$};
\draw[noarrowstyle] (\xc-\l*\t, \yc+\l*\s) -- (\xc-\l*\t, \yc-\l*\s);
\node at (\xc-\l*\t+\txt*\l*\s, \yc+\l*\s-\boundarr*2*\l*\s) {\small $B_\xi$};

\end{tikzpicture}
	\vspace{1em}
	\caption{On the left, generators defining $\tau_i$, $u_i$,
	and $u_{i+1}$ for $1\leq i\leq 2k$. Indices $i_1, i_2$ are such that $\{i_1, i_2\}=\{i,i+1\}$. On the right, definition of sets $A_\xi, B_\xi, B'_\xi, C_\xi, D_\xi$.}
\label{setsABCD_fig}
\end{figure}

For $1\leq i\leq 2k$, write $u_i=t_1^it_2^i$, where $t_1^i, t_2^i\in S\cup S^{-1}$. Additionally, let $t_1^{2k+1}=t_1^1$, 
$t_2^{2k+1}=t_2^1$, so that also $u_{2k+1}=t_1^{2k+1}t_2^{2k+1}$.
In this notation, the compatibility condition 
$\{u_i, u_{i+1}\}=\{\partial_2(\tau_i)^{-1}, \partial_3(\tau_i)\}$
reads as $\{t_1^it_2^i, t_1^{i+1}t_2^{i+1}\}=\{(x_2^i)^{-1}(z_3^i)^{-1}, x_3^iy_2^i\}$,
so that $\{t_1^i, t_1^{i+1}\}=\{(x_2^i)^{-1}, x_3^i\}$ and $\{t_2^i, t_2^{i+1}\}=\{(z_3^i)^{-1}, y_2^i\}$
for every $1\leq i\leq 2k$. This implies that $A_\xi =\{|t_1^i| : 1\leq i\leq 2k\}$ and $B_\xi = \{|t_2^i| : 
1\leq i\leq 2k\}$, so $|A_\xi|\leq 2k$ and $|B_\xi|\leq 2k$. We also have the following straightforward bounds: $|B'_\xi|\leq 
2k$, $|C_\xi|\leq 4k$, and $|D_\xi|\leq 4k$. Define $S_\xi = A_\xi\cup B_\xi\cup B'_\xi\cup C_\xi\cup D_\xi$ to be the set of all 
generators in $S$ appearing (possibly inverted) in the words of triagrams $\tau_1,\tau_2,\ldots,\tau_{2k}$.
Combining the inequalities, we get 
\begin{equation*}
|S_\xi|\leq 2k+2k+2k+4k+4k=14k. 
\end{equation*} 
If $|S_\xi|=14k$, then we say that the triagram cycle $\xi$ is \emph{generic}. Equivalently, $\xi$ is generic if and only if $A_\xi, B_\xi, B'_\xi, C_\xi, D_\xi$ are pairwise disjoint,
$|A_\xi|=|B_\xi|=|B'_\xi|=2k$ and $|C_\xi|=|D_\xi|=4k$. 
Additionally, if $\xi$ is generic, then also $|F_\xi^1|=|F_\xi^2|=|F_\xi^3|=2k$, by Remark~\ref{F_xi_ineq_rem}. If $\xi$ is generic, then so is any $\xi'\in\Theta(\xi)$.

To obtain our final bound on $\mathbb{E}\left(\Tr(A^{2k})\mathds{1}_\mathcal{R}\right)$, 
proving Lemma~\ref{trace_of_power_lem}, we analyze the last sum of (\ref
{orbit_sum_eq}) in the following steps. First, we bound the sizes of 
$\Theta$-orbits (Remark~\ref{orbit_size_rem}) and the size of $\Xi^0_{2k}$ 
(Remark~\ref{orbits_rep_rem}). Then we show that there 
are exactly $2^{2k}$ generic triagram cycles in $\Xi^0_{2k}$ 
(Remark~\ref{generic_num_rem}). Finally, we obtain bounds on the
terms of form $\left|\Theta(\xi)\right|p^{\left|F_{\xi}^1\right|+\left|F_{\xi}^2\right|+\left|F_{\xi}^3\right|}$, separately for generic and non-generic $\xi\in\Xi^0_{2k}$ (Lemma~\ref{term_bound_lem}).

\begin{rem}\label{orbit_size_rem}
For every $\xi\in\Xi_{2k}$, $|\Theta(\xi)|\leq (2n)^{|S_\xi|}$.
\end{rem}
\begin{proof}
Fix $\xi\in\Xi_{2k}$. When varying $\theta\in\Theta$, the element 
$\theta(\xi)$ is uniquely determined by
the restriction $\theta|_{S_\xi}:S_\xi \rightarrow S\cup S^{-1}$,
so that $|\Theta(\xi)|\leq (2n)^{|S_\xi|}$.
\end{proof}
\begin{rem}\label{orbits_rep_rem}
$|\Xi_{2k}^0|=O_k(1)$.
\end{rem}

\begin{proof}
Let $\xi\in\Xi_{2k}$. Write $S=\{s_1,s_2,\ldots,s_n\}$. There exists $\theta\in\Theta$ such that $\theta(S_\xi) =\{s_1, s_2, \ldots, 
s_{|S_\xi|}\}$. 
For $m=\min(14k,n)$ and denoting $S_m=\{s_1,s_2,\ldots,s_m\}$, we have $|S_\xi|\leq m$, so that $\xi'=\theta(\xi)$ is an element of the orbit $\Theta(\xi)$, satisfying $S_{\xi'}\subseteq S_m$. In other words, every $\Theta$-orbit has non-empty intersection with the set
\begin{equation*}
P_m=\{\xi\in\Xi_{2k} : S_\xi\subseteq S_m\},
\end{equation*}
implying that $|\Xi_{2k}^0|\leq |P_m|$.

Now it suffices to show that $|P_m|=O_k(1)$. For a~rough bound on $|P_m|$, we note that every $\xi=(\pmb{u},\pmb{\tau})\in P_m$ is uniquely determined by the~choice of $2k\cdot 2$ generators in $S_m\cup S_m^{-1}$ defining words $u_i$ for $1\leq i\leq 2k$, and $2k\cdot 3\cdot 3$ generators defining triagrams $\tau_i$ for $1\leq i\leq 2k$, so that $|P_m|\leq (2m)^{4k+18k}\leq(28k)^{22k}$.
\end{proof}

\begin{rem}\label{generic_num_rem}
If $n\geq 14k$, then there are exactly $2^{2k}$ generic triagram cycles in $\Xi_{2k}^0$.
\end{rem}
\begin{proof}
Consider any $\xi\in\Xi_{2k}$. As $u_i\neq u_{i+1}$ and $\{u_i, u_{i+1}\}=\{(x_2^i)^{-1}(z_3^i)^{-1}, x_3^iy_2^i\}$ for ${1\leq i\leq 2k}$,
we can define a~function $\sigma_\xi : \{1,2,\ldots,{2k}\}\longrightarrow \{0,1\}$ by:
\begin{equation}\label{sigma_def_eq}
\sigma_\xi(i) =
\begin{cases}
0,\text{ if }(u_i,u_{i+1})=((x_2^i)^{-1}(z_3^i)^{-1}, x_3^iy_2^i)\\
1,\text{ if }(u_i,u_{i+1})=(x_3^iy_2^i, (x_2^i)^{-1}(z_3^i)^{-1})
\end{cases}
\end{equation}
for every $1\leq i\leq 2k$. If $\xi_1, \xi_2\in \Xi_{2k}$ lie in the same $\Theta$-orbit, then $\sigma_{\xi_1}=\sigma_{\xi_2}$.

To prove that there are at least $2^{2k}$ generic triagram cycles $\xi\in\Xi_{2k}^0$, it suffices to show that
for any of the $2^{2k}$ distinct functions $\sigma : \{1,2,\ldots,2k\}\longrightarrow \{0,1\}$, there exists a generic $\xi\in\Xi_{2k}$, such that $\sigma_\xi=\sigma$. Given $\sigma$, we can construct such $\xi$ as follows. First, write
$S=\{s_1,s_2,\ldots,s_n\}$ and let ${u_i=s_{2i-1}s_{2i}}$ for ${1\leq i\leq 2k}$. 
Then let the values of $x_2^i, x_3^i, y_2^i, z_3^i$, for $1\leq i\leq 2k$, be such that $\sigma$ satisfies the condition
(\ref{sigma_def_eq}). Choose $x_1^i, y_1^i, y_3^i, z_1^i, z_2^i$, for $1\leq i\leq 2k$, to be the generators
$s_{4k+1},s_{4k+2},\ldots,s_{14k}$ in any order (here we need the assumption that $n\geq 14k$). 
The non-reducibility conditions 
coming from the definition of a triagram are satisfied, as any two generators $a, b\in S\cup S^{-1}$, defining $\xi$, for which it is required that $a\neq b^{-1}$, satisfy a stronger condition $|a|\neq |b|$.

On the other hand, the fact that there are at most $2^{2k}$ generic triagram cycles $\xi\in\Xi_{2k}^0$, is a~consequence of the following observation: if $\xi, \xi'\in\Xi_{2k}$ are generic triagram cycles and $\sigma_{\xi}=\sigma_{\xi'}$,
then $\xi$ and $\xi'$ lie in the same $\Theta$-orbit. 
To prove this, let us denote, for $1\leq i\leq 2k$, $1\leq j\leq 3$, and $1\leq q\leq 2$, by $x_j^{i\prime}, 
y_j^{i\prime},z_j^{i\prime},t_q^{i\prime},u_i^{\prime}$,
the generators and words defined for $\xi'$ 
analogously to how $x_j^i,y_j^i,z_j^i,t_q^i,u_i$ are defined for $\xi$. 
As $\left\{ |t_1^i|, |t_2^i|, |t_1^{i+1}|, |t_2^{i+1}| \right\} = \left\{|x_2^i|, |x_3^i|, |y_2^i|, |z_3^i|\right\}$ for~$1\leq i\leq 2k$, we have:
\begin{equation*}
S_\xi 
=\bigcup_{i=1}^{2k}\left\{|t_1^i|, |t_2^i|, |x_1^i|, |y_1^i|, |y_3^i|, |z_1^i|, |z_2^i|\right\}.
\end{equation*}
The set $S_\xi$ has $14k=2k\cdot 7$ elements, so the sets $\left\{|t_1^i|, |t_2^i|, |x_1^i|, |y_1^i|, |y_3^i|, |z_1^i|, |z_2^i|\right\}\subseteq S$
are pairwise disjoint for distinct 
$1\leq i\leq 2k$, and each of them contains 7 elements.
The analogous statement is true for $\xi'$, so there exists 
$\theta\in\Theta$, such that $\theta(t_1^i)=t_1^{i\prime},\ \theta(t_2^i)=t_2^{i\prime},\ \theta(x_1^i)=x_1^{i\prime},\ \theta(y_1^i)=y_1^{i\prime},\ \theta(y_3^i)=y_3^{i\prime},\ \theta(z_1^i)=z_1^{i\prime}$, and $\theta(z_2^i)=z_2^{i\prime}$ for $1\leq i\leq 2k$. The conditions $\theta(t_1^i)=t_1^{i\prime}$, and $\theta(t_2^i)=t_2^{i\prime}$ mean that $\theta(u_i)=u_i'$ for $1\leq i\leq 2k$. As $\sigma_\xi=\sigma_{\xi'}$, it implies
that also $\theta(x_2^i)=x_2^{i\prime},\ \theta(x_3^i)=x_3^{i\prime},\ \theta(y_2^i)=y_2^{i\prime}$ and $\theta(z_3^i)=z_3^{i\prime}$ for $1\leq i\leq 2k$, so $\theta(\xi)=\xi'$.

\end{proof}
\begin{lem}\label{term_bound_lem}
Let $\xi\in\Xi_{2k}$. If $\xi$ is generic, then $\left|\Theta(\xi)\right|p^{\left|F_{\xi}^1\right|+\left|F_{\xi}^2\right|+
\left|F_{\xi}^3\right|}\leq (2n)^{14k}p^{6k}$. If $\xi$ is not generic, then
$\left|\Theta(\xi)\right|p^{\left|F_{\xi}^1\right|+\left|F_{\xi}^2\right|+
\left|F_{\xi}^3\right|}=O_\rho\left((2n)^{14k-\frac{1}{3}\alpha}p^{6k}\right)$.
\end{lem}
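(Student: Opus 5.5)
The generic case is immediate from the preceding remarks; the non-generic case reduces to a counting inequality comparing lost generators with lost relators.

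\emph{Generic case.} If $\xi$ is generic, then $|S_\xi|=14k$ and $|F_\xi^1|=|F_\xi^2|=|F_\xi^3|=2k$ (as noted after the definition of genericity, via Remark~\ref{F_xi_ineq_rem}). Remark~\ref{orbit_size_rem} gives $|\Theta(\xi)|\leq(2n)^{14k}$, and the first bound follows at once.

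\emph{Reduction in the non-generic case.} Introduce the two deficits
\begin{equation*}
D=14k-|S_\xi|\geq 0,\qquad m=6k-\left(|F_\xi^1|+|F_\xi^2|+|F_\xi^3|\right)\geq 0.
\end{equation*}
Recall $p\sim\frac{1}{3}(2n)^{\frac{\alpha-7}{3}}$, so $p^{-1}=O((2n)^{\frac{7-\alpha}{3}})$. By Remark~\ref{orbit_size_rem},
\begin{equation*}
|\Theta(\xi)|\,p^{6k-m}\leq (2n)^{14k-D}\,p^{6k}\,p^{-m}=O_\rho\left((2n)^{14k-D+\frac{(7-\alpha)m}{3}}p^{6k}\right).
\end{equation*}
The implied constant is harmless because $m\leq 6k=O_\alpha(1)$. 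It therefore suffices to prove
\begin{equation*}
D\geq \frac{7m}{3}+\frac{\alpha(1-m)}{3}.
\end{equation*}
If $m=0$, then non-genericity gives $D\geq 1$, and $1>\frac{\alpha}{3}$ since $\alpha\leq 1$. If $m\geq 1$, the term $\frac{\alpha(1-m)}{3}$ is non-positive, so it is enough to prove the purely combinatorial inequality $3D\geq 7m$. In words: every relator lost to a coincidence among the $F^l_\xi$ must destroy at least $7/3$ of the $14k$ generator slots.

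\emph{Proving $3D\geq 7m$.} Write $m_l=2k-|F_\xi^l|$, so that $m=m_1+m_2+m_3$. I would bound the sizes of $A_\xi,B_\xi,B'_\xi,C_\xi,D_\xi$ separately.

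\begin{enumerate}
\item Remark~\ref{F_xi_ineq_rem} controls $B'_\xi$, $C_\xi$ and $D_\xi$. For instance, $|C_\xi|\leq\min(|F^3_\xi|,|F^1_\xi|)+\min(|F^1_\xi|,|F^2_\xi|)$ loses at least $2m_1$ relative to $4k$. Similarly $B'_\xi$ loses at least $\max(m_2,m_3)$, and $D_\xi$ loses through $y_3^i$ (governed by $F^2_\xi$) and $z_2^i$ (governed by $F^3_\xi$).
\item For $A_\xi=\{|t_1^i|\}$ and $B_\xi=\{|t_2^i|\}$, use the cycle structure. The vertices $t_1^1,\dots,t_1^{2k}$ form a closed walk whose $i$-th step is the pair $\{(x_2^i)^{-1},x_3^i\}$. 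If $r_1^i=r_1^j$ with $i\neq j$, the $i$-th and $j$-th steps carry the same unordered pair. A closed walk of length $2k$ with a repeated edge visits at most $2k-1$ distinct values (roughly, one fewer per repetition). This yields about one further lost generator per lost relator of $F^1_\xi$, and likewise for $B_\xi$ with $F^2_\xi$ (via $y_2^i$) and $F^3_\xi$ (via $z_3^i$).
\item Summing the losses, a lost relator of $F^1_\xi$ should cost about $2+1=3$ generators. A lost relator of $F^2_\xi$ or $F^3_\xi$ should cost about $4$ (one each from $B_\xi$, $B'_\xi$, $C_\xi$ or $D_\xi$). Both exceed $7/3$.
\end{enumerate}

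\emph{Main obstacle.} Step 3 is where the difficulty lies. The losses computed from the different sets may overlap rather than add, because the five sets need not be disjoint in the non-generic case. Moreover, a single repeated relator may coincide across several indices, and the walk-counting for $A_\xi$ and $B_\xi$ must be made precise. To avoid double counting, I would first try to define an injective charging scheme. Each relator coincidence in $F^l_\xi$ is assigned to specific positions, namely $x_1$ or $y_1$ in $C_\xi$, $z_1$ in $B'_\xi$, $y_3$ or $z_2$ in $D_\xi$, and a vertex of the $t_1$ or $t_2$ walk. One then checks that the charged slots are distinct, so the losses genuinely add. The margin is comfortable: roughly $3$ per lost relator is available against the $7/3$ needed. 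If the walk argument for $A_\xi$ and $B_\xi$ proves awkward, the set $C_\xi$ together with the pair $B'_\xi$, $D_\xi$ may already supply enough, and I would fall back on that.
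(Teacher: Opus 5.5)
\paragraph{Gap in the reduction to $3D\ge 7m$.}
Your reduction to $D\ge\frac{7m}{3}+\frac{\alpha(1-m)}{3}$ is correct. The problem comes when you drop the term $\frac{\alpha(1-m)}{3}$ and aim for the purely combinatorial inequality $3D\ge 7m$, because that inequality is false.

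A counterexample: take $\tau_1=\dots=\tau_{2k}=\tau$ for a single triagram $\tau$ whose nine letters have pairwise distinct absolute values. Let $u_i$ alternate between $\partial_2(\tau)^{-1}$ and $\partial_3(\tau)$; this is the closed walk in $L_1$ running back and forth along one edge. Then $|S_\xi|=9$ and $|F^1_\xi|=|F^2_\xi|=|F^3_\xi|=1$. Hence $D=14k-9$ and $m=6k-3$, so $3D=42k-27<42k-21=7m$.

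This $\xi$ also shows that the lemma itself is false for small $k$. Its term is of order $(2n)^9p^3$, which exceeds $(2n)^{14k-\frac{\alpha}{3}}p^{6k}$ by a factor of order $(2n)^{2+\frac{4\alpha}{3}-2\alpha k}$; for $k=1$ this is unbounded. So any correct proof must use the standing choice $k=\lfloor\frac{4}{\alpha}\rfloor+1$, and your argument never does.

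Your Step~2 is where the counting breaks, on tree-like walks. A closed walk whose distinct edges form a tree with $e$ edges visits $e+1$ distinct vertices, not $e$. So \emph{one fewer vertex per repetition} overstates the loss in $A_\xi$ (and in $B_\xi\cup B'_\xi$) by one.

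There is also no comfortable margin to absorb this off-by-one. The losses in $B'_\xi$ and $C_\xi$ coming from Remark~\ref{F_xi_ineq_rem} are maxima, not sums. The natural bounds are $2k-|A_\xi|\ge m_1$, $4k-|B_\xi\cup B'_\xi|\ge m_2+m_3$, $4k-|C_\xi|\ge\max(m_1,m_3)+\max(m_1,m_2)$ and $4k-|D_\xi|\ge m_2+m_3$. These add up to exactly $\frac73 m$ when $m_1=m_2=m_3$.

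\paragraph{The missing idea.}
Keep the slack $\frac{\alpha(m-1)}{3}$ and spend it exactly in the tree case, as the paper does.

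Let $H$ be the graph with vertex set $A_\xi$ and edge set $F^1_\xi$, with endpoints $|x_2|,|x_3|$. Let $K$ be the graph with vertex set $B_\xi\cup B'_\xi$ and edge set $F^2_\xi\sqcup F^3_\xi$, with endpoints $|y_2|,|z_1|$, resp.\ $|z_1|,|z_3|$. The sequences $(|t_1^i|)_i$ and $(|t_2^1|,|z_1^1|,|t_2^2|,|z_1^2|,\dots)$ are closed walks of lengths $2k$ and $4k$ covering $H$ and $K$.

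\begin{itemize}
\item If $H$ is not a tree, then $|A_\xi|\le|F^1_\xi|$.
\item If $H$ is a tree, then $|A_\xi|=|F^1_\xi|+1$. But every edge is traversed at least twice, so $m_1\ge k>\frac{4}{\alpha}$. Hence $\frac{\alpha}{3}m_1>\frac43$, which pays for the extra vertex.
\end{itemize}
In both cases $2k-|A_\xi|\ge\frac{3-\alpha}{3}m_1$. The same argument for $K$, using $m_2+m_3\ge 2k$ in the tree case, gives $4k-|B_\xi\cup B'_\xi|\ge\frac{3-\alpha}{3}(m_2+m_3)$.

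Now add the bound for $C_\xi$ in the averaged form $4k-|C_\xi|\ge\frac43m_1+\frac13m_2+\frac13m_3$, and the bound for $D_\xi$. This yields $D\ge\frac{7-\alpha}{3}m$.

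Finally, check two things. Each of these inequalities, when strict, is strict by at least $\frac{\alpha}{3}$. Equality in all of them forces $|S_\xi|=14k$. Together these give the extra $\frac{\alpha}{3}$ for non-generic $\xi$.
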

\begin{proof}

If $\xi$ is generic, then, as we noted before, $|F_\xi^1|=|F_\xi^2|=|F_\xi^3|=2k$. Hence,
by Remark~\ref{orbit_size_rem}, we have $\left|\Theta(\xi)\right|p^{\left|F_{\xi}^1\right|+\left|F_{\xi}^2
\right|+\left|F_{\xi}^3\right|}\leq (2n)^{|S_\xi|} p^{6k} = (2n)^{14k}p^{6k}$.

Now let us assume that $\xi$ is not generic. Since $p\sim\frac{1}{3}(2n)^{\frac{-7+\alpha}{3}}$, 
there exists a constant $B_p > 0$, such that $p\geq B_p(2n)^{\frac{-7+\alpha}{3}}$ for all $n\geq 1$. 
It can be chosen so that $B_p<1$. 
Clearly, $|F_\xi^1|, |F_\xi^2|, |F_\xi^3|\leq 2k$, so the number
$f_\xi=|F_\xi^1|+|F_\xi^2|+|F_\xi^3|-6k$ satisfies $f_\xi \in [-6k, 0]$ and so
\begin{equation*}
p^{f_\xi} \leq \left(B_p(2n)^{\frac{-7+\alpha}{3}}\right)^{f_\xi}
\leq B_p^{-6k}(2n)^{\frac{-7+\alpha}{3}f_\xi}
=O_\rho\left((2n)^{\frac{-7+\alpha}{3}f_\xi}\right).
\end{equation*}
By using this and Remark~\ref{orbit_size_rem}, we get
\begin{equation*}
\begin{split}
\left|\Theta(\xi)
\right|p^{\left|F_{\xi}^1\right|+\left|F_{\xi}^2\right|+
\left|F_{\xi}^3\right|} &\leq 
(2n)^{\left|S_\xi\right|}p^{\left|F_{\xi}^1\right|+\left|F_{\xi}^2\right|+
\left|F_{\xi}^3\right|}\\ 
&= (2n)^{\left|S_\xi\right|}p^{f_\xi}p^{6k}\\
&= O_\rho\left((2n)^{e_\xi}p^{6k}\right),
\end{split}
\end{equation*}
where we denote
\begin{equation*}
e_\xi=\left|S_\xi\right|+\frac{-7+\alpha}{3}f_\xi
=\left|S_\xi\right|+\frac{-7+\alpha}{3}\left(|F_{\xi}^1|+|F_{\xi}^2|+|F_{\xi}^3|-6k\right).
\end{equation*}
Now it suffices to show that $e_\xi\leq 14k-\frac{\alpha}
{3}$. To achieve this, we will first establish the following inequalities:
\begin{enumerate}[label=(\Roman*)]
    \item\label{ineq_B1} $|S_\xi|\leq |A_\xi|+|B_\xi\cup B_\xi'|+|C_\xi|+|D_\xi|$,
    \item\label{ineq_B2} $|C_\xi|\leq \frac{4}{3}|F_\xi^1|+\frac{1}{3}|F_\xi^2|+\frac{1}{3}|F_\xi^3|$,
    \item\label{ineq_B3} $|D_\xi|\leq |F_\xi^2|+|F_\xi^3|$,
    \item\label{ineq_B4} $|A_\xi| \leq \frac{3-\alpha}{3}|F_\xi^1|+\frac{2}{3}\alpha k$,
    \item\label{ineq_B5} $|B_\xi\cup B_\xi'|\leq \frac{3-\alpha}{3}(|F_\xi^2|+|F_\xi^3|)+\frac{4}{3}\alpha k$.
\end{enumerate}
Additionally, it will be clear from our arguments, that if any of the~inequalities \ref{ineq_B1}-\ref{ineq_B5} is strict, then its sides differ by at least $\min\left(1, \frac{1}{3}, \frac{\alpha}{3}\right)=\frac{\alpha}{3}$. We prove the inequalities as follows.

\textbf{\ref{ineq_B1}}. This follows straightforwardly from the definition of $S_\xi$. 

\textbf{\ref{ineq_B2}}. 
By Remark~\ref{F_xi_ineq_rem}, we have:
\begin{equation*}
|C_\xi| \leq \left|\left\{\left|x_1^i\right| : 1\leq i\leq 2k\right\}\right|
+ \left|\left\{\left|y_1^i\right| : 1\leq i\leq 2k\right\}\right|
\leq 2|F_\xi^1|,
\end{equation*}
and, by using the same remark to differently bound the~middle sum, $|C_\xi|\leq |F_\xi^3|+|F_\xi^2|$. By combining the two bounds on $|C_\xi|$, we obtain:
\begin{equation*}
|C_\xi| = \frac{2}{3}|C_\xi|+\frac{1}{3}|C_\xi| \leq\frac{2}{3}\left(2|F_\xi^1|\right)+\frac{1}{3}\left(|F_\xi^3|+|F_\xi^2|\right)=\frac{4}{3}|F_\xi^1|+\frac{1}{3}|F_\xi^2|+\frac{1}{3}|F_\xi^3|.
\end{equation*}
If equality holds in \ref{ineq_B2}, then 
$|C_\xi| = 2|F_\xi^1|=|F_\xi^2|+|F_\xi^3|$.

\textbf{\ref{ineq_B3}}. By Remark~\ref{F_xi_ineq_rem}, we have
\begin{equation*}
|D_\xi|\leq \left|\left\{\left|y_3^i\right| : 1\leq i\leq 2k\right\}\right|
+ \left|\left\{\left|z_2^i\right| : 1\leq i\leq 2k\right\}\right|
\leq \left|F_\xi^2\right|+\left|F_\xi^3\right|.
\end{equation*}

\begin{figure}[H]
	\centering
	\begin{tikzpicture}[line cap = round, line join = round]

\def\midarr{0.5}
\def\boundarr{0.5}
\def\Hratio{0.7}
\def\Kratio{0.3}
\def\Kvertextxty{0.3}
\def\Kedgetxty{1.25}
\tikzset{boundedgestyle/.style={
        thick,
        decoration={
            markings,
            mark=at position \boundarr with {\arrow{angle 45}}
        },
        postaction={decorate}
}}
\tikzset{midedgestyle/.style={
        thick,
        decoration={
            markings,
            mark=at position \midarr with {\arrow{angle 45}}
        },
        postaction={decorate}
}}
\tikzset{noarrowstyle/.style={
        thick,
        decoration={
            markings
        },
        postaction={decorate}
}}
\def\l{1}
\def\s{1}
\def\t{1.732}
\def\w{0.577}
\def\txt{0.25}

\def\q{0}
\def\p{0}
\def\r{0}
\def\xa{\l*\p}
\def\ya{\l*\q-\l*2*\w*\r}
\draw[noarrowstyle] (\xa, \ya) -- (\xa-\l*\t, \ya-\l*\s);
\draw[noarrowstyle, ultra thick] (\xa-\l*\t, \ya-\l*\s) -- (\xa, \ya-2*\l*\s);
\node at (\xa-\l*\t+\boundarr*\l*\t-\txt*3.5*\l*\s, \ya-\l*\s-\boundarr*\l*\s-\txt*0.5*\t*\l) {\small $|x_2^i|=|t_1^{i_1}|$};
\draw[noarrowstyle, ultra thick] (\xa, \ya-2*\l*\s) -- (\xa+\l*\t, \ya-\l*\s);
\node at (\xa+\boundarr*\l*\t+\txt*3.5*\l*\s, \ya-2*\l*\s+\boundarr*\l*\s-\txt*0.5*\t*\l) {\small $|x_3^i|=|t_1^{i_2}|$};
\draw[dotted] (\xa-\Hratio*\l*\t, \ya-2*\l*\s+\Hratio*\l*\s) -- 
(\xa+\Hratio*\l*\t, \ya-2*\l*\s+\Hratio*\l*\s);
\node at (\xa, \ya-2*\l*\s+\Hratio*\l*\s+\txt*\l*\s) {\small $r_1^i$};
\def\xb{\l*\p+\l*\r}
\def\yb{\l*\q+\l*\w*\r}
\draw[noarrowstyle] (\xb, \yb) -- (\xb+\l*\t, \yb-\l*\s);
\draw[noarrowstyle] (\xb+\l*\t, \yb-\l*\s) -- (\xb+\l*\t, \yb+\l*\s);
\draw[noarrowstyle] (\xb+\l*\t, \yb+\l*\s) -- (\xb, \yb+2*\l*\s);

\def\xc{\l*\p-\l*\r}
\def\yc{\l*\q+\l*\w*\r}
\draw[noarrowstyle] (\xc, \yc) -- (\xc, \yc+2*\l*\s);

\draw[noarrowstyle] (\xc, \yc+2*\l*\s) -- (\xc-\l*\t, \yc+\l*\s);

\draw[noarrowstyle] (\xc-\l*\t, \yc+\l*\s) -- (\xc-\l*\t, \yc-\l*\s);

\def\q{0}
\def\p{7}
\def\r{0}

\def\xa{\l*\p}
\def\ya{\l*\q-\l*2*\w*\r}
\draw[noarrowstyle] (\xa, \ya) -- (\xa-\l*\t, \ya-\l*\s);

\draw[noarrowstyle] (\xa-\l*\t, \ya-\l*\s) -- (\xa, \ya-2*\l*\s);

\draw[noarrowstyle] (\xa, \ya-2*\l*\s) -- (\xa+\l*\t, \ya-\l*\s);

\def\xb{\l*\p+\l*\r}
\def\yb{\l*\q+\l*\w*\r}
\draw[noarrowstyle] (\xb, \yb) -- (\xb+\l*\t, \yb-\l*\s);

\draw[noarrowstyle, ultra thick] (\xb+\l*\t, \yb-\l*\s) -- (\xb+\l*\t, \yb+\l*\s);

\draw[dotted] (\xb+\l*\t, \yb-\l*\s+\Kratio*2*\l*\s) -- (\xc, \yc+\Kratio*2*\l*\s);

\node at (\xb+0.5*\l*\t, \yb-\l*\s+\Kratio*2*\l*\s+0.5*\l*\s+\Kedgetxty*\txt*\l*\s) {\small $r_2^i$};
\node at (\xb+\l*\t+3.75*\txt*\l*\s, \yb-\l*\s+\boundarr*2*\l*\s+\Kvertextxty*\l*\s) {\small $|y_2^i|=|t_2^{i_2}|$};
\draw[noarrowstyle] (\xb+\l*\t, \yb+\l*\s) -- (\xb, \yb+2*\l*\s);

\def\xc{\l*\p-\l*\r}
\def\yc{\l*\q+\l*\w*\r}
\draw[noarrowstyle, ultra thick] (\xc, \yc) -- (\xc, \yc+2*\l*\s);
\node at (\xc-\txt*1.5*\l*\s, \yc+\boundarr*2*\l*\s+\Kvertextxty*\l*\s) {\small $|z_1^i|$};
\draw[noarrowstyle] (\xc, \yc+2*\l*\s) -- (\xc-\l*\t, \yc+\l*\s);

\draw[noarrowstyle, ultra thick] (\xc-\l*\t, \yc+\l*\s) -- (\xc-\l*\t, \yc-\l*\s);

\draw[dotted] (\xc, \yc + \Kratio*2*\l*\s) -- (\xc-\l*\t, \yc-\l*\s + \Kratio*2*\l*\s);

\node at (\xc - 0.5*\l*\t, \yc - 0.5*\l*\s + \Kratio*2*\l*\s + \Kedgetxty*\txt*\l*\s) {\small $r_3^i$};
\node at (\xc-\l*\t-3.75*\txt*\l*\s, \yc+\l*\s-\boundarr*2*\l*\s+\Kvertextxty*\l*\s) {\small $|z_3^i|=|t_2^{i_1}|$};

\end{tikzpicture}
	\vspace{1em}
	\caption{Definition of auxilliary graphs. On the left, in the graph $H$, the~edge 
	$r_1^i\in F_\xi^1$ joining vertices $|x_2^i|,|x_3^i|\in A_\xi$.
	On the right, in the~graph $K$, the~edge $r_2^i\in \widetilde{F_\xi^2}$ joining vertices $|y_2^i|$ and $|z_1^i|$, and
	the~edge $r_3^i\in \widetilde{F_\xi^3}$ joining vertices $|z_1^i$| and $|z_3^i|$.}
\end{figure}

\textbf{\ref{ineq_B4}}. Let us construct an auxiliary graph $H$ with
vertex set $A_\xi$ and edge set $F_\xi^1$. Given an edge $r=x_1x_2x_3y_1^{-1}\in F_\xi^1$, declare the endpoints of $r$ to be $|x_2|$ and $|x_3|$.
Under this definition, for every $1\leq i\leq 2k$, $r_1^i$ is an~edge joining 
$|x_2^i|$ and $|x_3^i|$. The condition: $\{|t_1^i|, |t_1^{i+1}|\}=\{|x_2^i|, |x_3^i|\}$ 
for $1\leq i\leq 2k$, means that the sequence of edges $c_H=(r_1^1, r_1^2,\ldots,r_1^{2k})$ is a~closed path in $H$,
passing consecutively through the vertices $|t_1^1|, |t_1^2|, \ldots, |t_1^{2k}|, |t_1^1|$, hence through every vertex of $H$ at least once. In particular, the graph $H$ is connected. Let us consider two cases, depending on whether $H$ is a tree, where by a~tree we mean a~simple graph (without multiple edges and loops) that is connected and has no cycles.

\textbf{(i)}. Suppose first that $H$ is not a tree. Then $H$ has at least as many edges
as vertices, i.e.\ $|F_\xi^1|\geq |A_\xi|$. Using also the trivial bound $|F_\xi^1|\leq 2k$, we obtain
\begin{equation*}
\frac{3-\alpha}{3}|F_\xi^1|+\frac{2}{3}\alpha k
\geq \frac{3-\alpha}{3}|F_\xi^1|+\frac{\alpha}{3}|F_\xi^1|
=|F_\xi^1|\geq |A_\xi|,
\end{equation*}
which is \ref{ineq_B4}. In case (i), if equality holds in \ref{ineq_B4}, then $|F_\xi^1|=|A_\xi|=2k$.

\textbf{(ii)}. Now suppose that $H$ is a tree, so $|F_\xi^1|=|A_\xi|-1$. We claim that every edge $e$ of $H$ appears in the sequence $c_H$ at least twice. To see that, let $u$ be the starting vertex
of $c_H$ and denote by $v_1,v_2$ the endpoints of $e$, so 
that $v_2$ is at a~greater distance from $u$, than $v_1$. 
The edge $e$ is traversed by $c_H$ the first time $v_2$ is visited, and has to be later 
backtracked at some point for $c_H$ to be able to return to $u$. This observation
implies that $2k\geq 2|F_\xi^1|$, which simplifies to $k\geq |F_\xi^1|$.
Bounding also $k>\frac{4}{\alpha}$, we get
\begin{equation*}
\begin{split}
\frac{3-\alpha}{3}|F_\xi^1|+\frac{2}{3}\alpha k
&\geq \frac{3-\alpha}{3}|F_\xi^1|+\frac{\alpha}{3}|F_\xi^1|+\frac{1}{3}\alpha k=|F_\xi^1|+\frac{1}{3}\alpha k\\
&=|A_\xi|+\frac{1}{3}\alpha k-1> |A_\xi|+\frac{1}{3}> |A_\xi|.
\end{split}
\end{equation*}
In case (ii), \ref{ineq_B4} holds as a strict inequality.

\textbf{\ref{ineq_B5}}. Similarly as in the proof of \ref{ineq_B4}, we construct an~auxiliary graph $K$. Let the vertex set of $K$ be $B_\xi \cup B'_\xi$ and let the edge set of $K$ be
$\widetilde{F_\xi^2}\cup\widetilde{F_\xi^3}$, where $\widetilde{F_\xi^2}$ and $\widetilde{F_\xi^3}$ are disjoint copies of 
$F_\xi^2$ and $F_\xi^3$. Slightly abusing the notation, given an edge $r=y_1y_2y_3z_1^{-1}\in\widetilde{F_\xi^2}$ (resp. $r=z_1z_2z_3x_1^{-1}\in\widetilde{F_\xi^3}$), declare the endpoints of $r$ to be $|y_2|$ and $|z_1|$ (resp.\ $|z_1|$ and $|z_3|$).
This means that, for every $1\leq i\leq 2k$, $r_2^i\in\widetilde{F_\xi^2}$ is an edge between $|y_2^i|$ and $|z_1^i|$, and $r_3^i\in\widetilde{F_\xi^3}$ is an edge
between $|z_1^i|$ and $|z_3^i|$. As $\{|t_2^i|, |t_2^{i+1}|\}=\{|z_3^i|, |y_2^i|\}$,
this means that the edges $r_2^i$ and $r_3^i$, in some order, form a~path in $K$, passing consecutively through the vertices $|t_2^i|, |z_1^i|$ and $|t_2^{i+1}|$.
By combining these paths for all $1\leq i\leq 2k$, we obtain a closed path $c_K$ of length $2k\cdot 2=4k$, containing every edge of $K$ and passing consecutively through the 
vertices $|t_2^1|, |z_1^1|, |t_2^2|, |z_1^2|, |t_2^3|,\ldots, |t_2^{2k}|, 
|z_1^{2k}|, |t_2^1|$. As every element of $B_\xi\cup B'_\xi$ is of form $|t_2^i|$ or $|z_1^i|$ for some $1\leq i\leq 2k$, $c_K$ passes through all vertices of $K$. In particular, the graph $K$ is connected. As in the~proof of \ref{ineq_B4}, let us consider two cases, depending on whether $K$ is a tree.

\textbf{(i)}. Suppose first that $K$ is not a tree, so $|\widetilde{F_\xi^2}\cup\widetilde{F_\xi^3}|\geq |B_\xi \cup B'_\xi|$. Combining this with the trivial bound
\begin{equation*}
|\widetilde{F_\xi^2}\cup\widetilde{F_\xi^3}|=|F_\xi^2|+|F_\xi^3| \leq 2k+2k=4k,
\end{equation*} 
we get
\begin{equation*}
\begin{split}
    \frac{3-\alpha}{3}(|F_\xi^2|+|F_\xi^3|)+\frac{4}{3}\alpha k
    &=\frac{3-\alpha}{3}|\widetilde{F_\xi^2}\cup\widetilde{F_\xi^3}|+\frac{4}{3}\alpha k\\
    &\geq \frac{3-\alpha}{3}|\widetilde{F_\xi^2}\cup\widetilde{F_\xi^3}|+\frac{\alpha}{3} |\widetilde{F_\xi^2}\cup\widetilde{F_\xi^3}|\\
    &=|\widetilde{F_\xi^2}\cup\widetilde{F_\xi^3}|\geq |B_\xi\cup B'_\xi|,
\end{split}
\end{equation*}
showing \ref{ineq_B5}. If equality holds in \ref{ineq_B5} in case (i),
then $|B_\xi\cup B'_\xi|=|\widetilde{F_\xi^2}\cup\widetilde{F_\xi^3}|=4k$, so 
also $|F_\xi^2|=|F_\xi^3|=2k$.

\textbf{(ii)}. If $K$ is a tree, then $|\widetilde{F_\xi^2}\cup\widetilde{F_\xi^3}|= |B_\xi\cup B'_\xi|-1$. Analogously as in the proof of \ref{ineq_B4}, we see that 
every edge of $K$ appears in the sequence $c_K$ at least twice, so that 
$4k\geq 2|\widetilde{F_\xi^2}\cup\widetilde{F_\xi^3}|$. Equivalently, $2k\geq |\widetilde{F_\xi^2}\cup\widetilde{F_\xi^3}|$. Applying also the fact that
$k>\frac{4}{\alpha}$, we get

\begin{equation*}
\begin{split}
    \frac{3-\alpha}{3}(|F_\xi^2|+|F_\xi^3|)+\frac{4}{3}\alpha k
    &=\frac{3-\alpha}{3}|\widetilde{F_\xi^2}\cup\widetilde{F_\xi^3}|+\frac{4}{3}\alpha k\\
    &\geq \frac{3-\alpha}{3}|\widetilde{F_\xi^2}\cup\widetilde{F_\xi^3}|+\frac{\alpha}{3} |\widetilde{F_\xi^2}\cup\widetilde{F_\xi^3}|+\frac{2}{3}\alpha k\\
    &=|\widetilde{F_\xi^2}\cup\widetilde{F_\xi^3}|+\frac{2}{3}\alpha k
    = |B_\xi\cup B'_\xi|+\frac{2}{3}\alpha k-1\\
    &> |B_\xi\cup B'_\xi|+\frac{5}{3}> |B_\xi\cup B'_\xi|.
\end{split}
\end{equation*}
Hence the inequality \ref{ineq_B5} is always strict when $K$ is a tree.

We have thus proved inequalities \ref{ineq_B1}-\ref{ineq_B5}. Let us now see that 
the assumption that $\xi$ is not generic implies that at least one of these inequalities
is strict. Assume to the contrary, that equality holds in all the inequalities
\ref{ineq_B1}-\ref{ineq_B5}. As shown, equalities in \ref{ineq_B4} and \ref{ineq_B5} imply that $|F_\xi^1|=|F_\xi^2|=|F_\xi^3|=2k$, $|A_\xi|=2k$, and $|B_\xi\cup B'_\xi|=4k$.
Equalities in \ref{ineq_B2} and \ref{ineq_B3} hence give $|C_\xi|=4k$
and $|D_\xi|=4k$, so equality in \ref{ineq_B1} now means that $|S_\xi|=14k$, contradicting the assumption that $\xi$ is not generic.

By combining the inequalities \ref{ineq_B1}-\ref{ineq_B5}, we obtain
\begin{equation*}
\begin{split}
|S_\xi|&\leq |A_\xi|+|B_\xi\cup B_\xi'|+|C_\xi|+|D_\xi|\\
&\leq\left(\frac{3-\alpha}{3}|F_\xi^1|+\frac{2}{3}\alpha k\right)
+ \left(\frac{3-\alpha}{3}(|F_\xi^2|+|F_\xi^3|)+\frac{4}{3}\alpha k\right)
+ \left(\frac{4}{3}|F_\xi^1|+\frac{1}{3}|F_\xi^2|+\frac{1}{3}|F_\xi^3|\right)\\
&\quad + \left(|F_\xi^2|+|F_\xi^3|\right)\\
&= \frac{7-\alpha}{3}(|F_\xi^1|+|F_\xi^2|+|F_\xi^3|)+2\alpha k.
\end{split}
\end{equation*}

Since at least one of the inequalities \ref{ineq_B1}-\ref{ineq_B5} is strict,
meaning that its sides differ by at least $\frac{\alpha}{3}$, we have in fact the following stronger bound:

\begin{equation*}
|S_\xi|\leq \frac{7-\alpha}{3}(|F_\xi^1|+|F_\xi^2|+|F_\xi^3|)+2\alpha k-\frac{\alpha}{3},
\end{equation*}
which can be easily rearranged as $e_\xi\leq 14k-\frac{\alpha}{3}$.
\end{proof}

We are now in the position to prove Lemma~\ref{trace_of_power_lem}.
\begin{proof}[Proof of Lemma~\ref{trace_of_power_lem}]
Let $n\geq 14k$. Let
$\Xi_{2k}^{0,g}=\{\xi\in \Xi_{2k}^0 : \xi\text{ is generic}\}$.
Recall that $d_0=2(2n)^7p^3$ and $\delta=n^{-\frac{\alpha}{3}}$.
Using inequality~(\ref{orbit_sum_eq}), Remarks \ref{orbits_rep_rem} and \ref{generic_num_rem}, and Lemma~\ref{term_bound_lem}, we obtain

\begin{equation}\label{final_trace_ineq}
\begin{split}
\mathbb{E}\left(\Tr(A^{2k})\mathds{1}_\mathcal{R}\right)
&\leq (1-\delta)^{-2k}d_0^{-2k}\sum_{\xi\in\Xi_{2k}^0} \left|\Theta(\xi)\right|p^{\left|F_{\xi}^1\right|+\left|F_{\xi}^2\right|+\left|F_{\xi}^3\right|}\\
&\leq (1-\delta)^{-2k}d_0^{-2k}\left(\left|\Xi_{2k}^{0,g}\right|(2n)^{14k}p^{6k}+
\left|\Xi_{2k}^0\setminus\Xi_{2k}^{0,g}\right|O_\rho\left((2n)^{14k-\frac{1}{3}\alpha}p^{6k}\right)\right)\\
&= (1-\delta)^{-2k}d_0^{-2k}\left(2^{2k}(2n)^{14k}p^{6k}+
O_{\rho,k}\left((2n)^{14k-\frac{1}{3}\alpha}p^{6k}\right)\right)\\
&= (1-\delta)^{-2k}\left(d_0^{-1}2(2n)^7p^3\right)^{2k}\left(1+
2^{-2k}O_{\rho,k}\left((2n)^{-\frac{\alpha}{3}}\right)\right)\\
&=\left(1-n^{-\frac{\alpha}{3}}\right)^{-2k}\left(1+
O_\rho\left(n^{-\frac{\alpha}{3}}\right)\right),
\end{split}
\end{equation}
where in the last step we note that $k$ is defined as a~function of $\alpha$,
and we remember that $\alpha$ is determined by $\rho$,
so the notation $O_{\rho, k}(\cdot)$ is equivalent to $O_\rho(\cdot)$.
Consider $n$ large enough, so that $n^{-\frac{\alpha}{3}}<\frac{1}{2}$. By the mean value theorem applied to the
function $f(x)=x^{-2k}$ and interval $(1-n^{-\frac{\alpha}{3}},1)$, we have
$(1-n^{-\frac{\alpha}{3}})^{-2k}-1=
2kt^{-2k-1}n^{-\frac{\alpha}{3}}$
for some $t\in(1-n^{-\frac{\alpha}{3}},1)\subset(\frac{1}{2},1)$.
In particular, $(1-n^{-\frac{\alpha}{3}})^{-2k}\leq
1+2^{2k+2}kn^{-\frac{\alpha}{3}}=1+O_\rho\left(n^{-\frac{\alpha}{3}}\right)$.
Using this with (\ref{final_trace_ineq}), we get:
\begin{equation*}
\begin{split}
\mathbb{E}\left(\Tr(A^{2k})\mathds{1}_\mathcal{R}\right)
&\leq\left(1-n^{-\frac{\alpha}{3}}\right)^{-2k}\left(1+
O_\rho\left(n^{-\frac{\alpha}{3}}\right)\right)\\
&\leq\left(1+
O_\rho\left(n^{-\frac{\alpha}{3}}\right)\right)^2\\
&= 1+O_\rho\left(n^{-\frac{\alpha}{3}}\right).
\end{split}
\end{equation*}
We thus proved Lemma~\ref{trace_of_power_lem} for $n$ large enough, given $k$, 
which is sufficient for full generality.
\end{proof}

\section{Asymptotic regularity of $L_i$}\label{asymptotic_regularity_sec}
In this section we prove Lemma~\ref{regularity_lem}. We recall that 
$d_0=2(2n)^7p^3$. For general $\tau\in T(S)$, we fix the notation: $\tau = (r_1, r_2, r_3) = (x_1x_2x_3y_1^{-1}, y_1y_2y_3z_1^{-1}, z_1z_2z_3x_1^{-1})$, where $r_l\in W_4(S)$ for $1\leq l\leq 3$ and $x_j, y_j, z_j\in S\cup S^{-1}$ for $1\leq j\leq 3$.

\subsection{Counting triagrams and expected degree of a vertex}

In Section~\ref{asymptotic_regularity_sec} we will frequently use 
the following observation that the number of triagrams with $m$ 
fixed generators is asymptotically equal to $(2n)^{9-m}$ or 
exactly equal to 0. 

\begin{lem}\label{counting_triagrams_lem}
Suppose $0\leq m\leq 9$ and $C$ is a set of $m$ conditions, each of one of the three following forms:
$x_i=a$, $y_i=a$ or $z_i=a$, where $1\leq i\leq 3$, symbols $x_i,y_i,z_i$ are variables, and $a \in S\cup S^{-1}$.
Suppose that every variable appears in at most one condition in $C$. Let $T(S,C)$ be the set of all triagrams $\tau$
over $S$, satisfying all conditions in $C$.
If $T(S,C)$ is non-empty,
then $|T(S,C)|=(2n)^{9-m}\left(1-O_\text{abs}\left(n^{-1}\right)\right)
\leq (2n)^{9-m}$.
\end{lem}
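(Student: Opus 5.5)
A triagram $\tau$ is determined by the nine letters $x_1,x_2,x_3,y_1,y_2,y_3,z_1,z_2,z_3\in S\cup S^{-1}$. These letters are subject to finitely many constraints, each of the form $a\neq b^{-1}$ between two of them. The constraints are the cyclic reducedness of $r_1,r_2,r_3$: for instance $x_1\neq x_2^{-1}$, $x_2\neq x_3^{-1}$, $x_3\neq y_1$, $y_1\neq x_1$ inside $r_1=x_1x_2x_3y_1^{-1}$, and similarly for $r_2,r_3$. They also include the three conditions $x_3\neq y_2^{-1}$, $y_3\neq z_2^{-1}$, $z_3\neq x_2^{-1}$. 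So $T(S)$ is in bijection with the set of $9$-tuples avoiding a fixed finite list $\mathcal{F}$ of at most $15$ forbidden binary relations, each of the form ``variable $v$ equals $\phi(w)$'' with $\phi$ either the identity or inversion. Fixing $m$ distinct variables via $C$ leaves $9-m$ free variables. The upper bound $|T(S,C)|\leq(2n)^{9-m}$ is immediate, since each free variable has at most $2n$ values. The substance is the lower bound $(2n)^{9-m}(1-O_\text{abs}(n^{-1}))$.

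For the lower bound I will count by inclusion--exclusion, or more crudely by a union bound. Let $N_0=(2n)^{9-m}$ be the number of assignments of the free variables. A constraint in $\mathcal{F}$ involving two free variables is violated by exactly $(2n)^{9-m-1}$ assignments. A constraint involving one free and one fixed variable is violated by exactly $(2n)^{9-m-1}$ assignments. A constraint involving two fixed variables is either satisfied by the data of $C$ or violated identically. In the latter case $T(S,C)=\emptyset$, which is excluded by hypothesis. Hence, when $T(S,C)\neq\emptyset$, every constraint between two fixed variables holds automatically, and the union bound gives
\begin{equation*}
|T(S,C)|\geq (2n)^{9-m}-|\mathcal{F}|(2n)^{8-m}=(2n)^{9-m}\left(1-\tfrac{|\mathcal{F}|}{2n}\right),
\end{equation*}
with $|\mathcal{F}|\leq 15$ an absolute constant. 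This is exactly the claimed form $(2n)^{9-m}(1-O_\text{abs}(n^{-1}))$.

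The only point requiring care is the bookkeeping that translates ``$\tau\in T(S)$'' faithfully into the constraint list $\mathcal{F}$. One must check that cyclic reducedness of each $r_l$ and the three $W_2$-conditions are the only requirements, and that each is a single relation between two of the nine variables. This holds because the fourth letter of each $r_l$ is the inverse of a first letter of another relator, by the gluing conditions $x_4=y_1^{-1}$ and so on. One must also confirm that in the case $m=9$ the statement degenerates correctly to $|T(S,C)|=1$. I do not expect a genuine obstacle; the only thing to watch is that ``$T(S,C)$ non-empty'' is used precisely to discard violated constraints between fixed variables.
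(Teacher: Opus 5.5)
Your proof is correct and is essentially the paper's argument. It uses the same encoding of triagrams as $9$-tuples avoiding $15$ binary inequalities, the same trivial upper bound, and the same use of non-emptiness to discard constraints between two fixed variables. The one cosmetic difference is in the lower bound: the paper gets $(2n-15)^{9-m}$ by choosing the free variables one at a time (each constraint excludes at most one value), whereas you use a union bound to get $(2n)^{9-m}-15(2n)^{8-m}$, and both yield $(2n)^{9-m}\left(1-O_\text{abs}\left(n^{-1}\right)\right)$.
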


\begin{proof}
First, let $V=\{x_1, x_2, x_3, y_1, y_2, y_3, z_1, z_2, z_3\}$ be a set of 9 variables and 
let us see that triagrams over $S$ are in one-to-one correspondence with 
assignments $\phi:V\longrightarrow S\cup S^{-1}$, satisfying a~set $D$ of 15 inequalities of form $u\neq v$, where $u, v\in V\cup V^{-1}$ are distinct expressions. This follows by 
noticing that each of the conditions required by the definition of a triagram: $x_1x_2x_3y_1^{-1}, y_1y_2y_3z_1^{-1}, z_1z_2z_3x_1^{-1}\in W_4(S)$, is equivalent to 4 inequalities of the
given form. These, together with $x_3\neq y_2^{-1}$, $y_3\neq z_2^{-1}$, and $z_3\neq x_2^{-1}$, define
the~desired set $D$ of size $3\cdot4 + 3=15$.

Now, $|T(S,C)|$ is the number of assignments $\phi:V\longrightarrow S\cup S^{-1}$,
which satisfy all the conditions in $C\cup D$. The equations in $C$ amount to fixing $m$ values of $\phi$, so ${|T(S,C)|\leq (2n)^{9-m}}$. 
If these values violate any of the inequalities in $D$, then $T(S,C)=\emptyset$. Otherwise, without contradicting $D$, we can choose the~remaining $(9-m)$ values of $\phi$ one by one, having for each subsequent variable $t\in V$ at least $(2n-15)$ valid choices of $\phi(t)$, since at 
every stage of this construction every condition in $D$ excludes at most one value of $\phi(t)$. 
Hence, $|T(S,C)|\geq (2n-15)^{9-m}=(2n)^{9-m}\left(1-O_\text{abs}\left(n^{-1}\right)\right)$, which together with 
$|T(S,C)|\leq (2n)^{9-m}$ concludes the argument.
\end{proof}

As its first application, let us use Lemma~\ref
{counting_triagrams_lem} to show that every vertex of each $L_i$, for $1\leq i\leq 3$, has expected degree asymptotically equal to $d_0$.

\begin{rem}\label{degree_rem}
For every $1\leq i\leq 3$ and $w\in W_2$,
\begin{equation*}
\mathbb{E}\left(d_{L_i}(w)\right)=d_0\left(1-O_\text{abs}\left(n^{-1}\right)\right).
\end{equation*}
\end{rem}

\begin{proof}
W.l.o.g.\ assume that $i=1$. Let $w=st\in W_2$, where $s,t\in S\cup S^{-1}$ and $s\neq t^{-1}$.
By Definition~\ref{def_triagram_link_graph}, edges of $L_1$ correspond bijectively to triagrams in $\mathcal{T}$, and
the edge corresponding to any $\tau\in\mathcal{T}$ has distinct endpoints $\partial_2(\tau)^{-1}=x_2^{-1}z_3^{-1}$ and $\partial_3(\tau)=x_3y_2$. This means that 
$\mathbb{E}\left(d_{L_1}(w)\right)=(m_1+m_2)p^3$, where $m_1$ (resp.\ $m_2$) is the number of triagrams $\tau\in T_S$, such that 
$x_2^{-1}z_3^{-1}=w$ (resp.\ $x_3y_2=w$),
i.e.\ $x_2=s^{-1}$ and $z_3=t^{-1}$ (resp.\ $x_3=s$ and $y_2=t$). 
If $n\geq 9$, then it is easy to see that $m_1>0$: for pairwise distinct generators $g_1, g_2, \ldots, g_7\in S\setminus\{s, s^{-1}, t, t^{-1}\}$, the triple $\tau = (g_1s^{-1}g_2g_3^{-1}, g_3g_4g_5g_6^{-1}, g_6g_7t^{-1}g_1^{-1})$ is a~triagram satisfying $x_2=s^{-1}$ and $z_3=t^{-1}$. Analogously, we also have $m_2>0$ for $n\geq 9$. Hence, by~Lemma~\ref{counting_triagrams_lem}, $m_1=(2n)^{7}\left(1-O_\text{abs}\left(n^{-1}\right)\right)$, 
$m_2=(2n)^{7}\left(1-O_\text{abs}\left(n^{-1}\right)\right)$, and so
\begin{equation*}
\mathbb{E}\left(d_{L_1}(w)\right)=(m_1+m_2)p^3=2(2n)^7\left(1-O_\text{abs}\left(n^{-1}\right)\right)p^3
=d_0\left(1-O_\text{abs}\left(n^{-1}\right)\right).
\end{equation*}

\end{proof}

\subsection{Concentration of 3rd degree homogeneous polynomials}
\label{kim-vu_subsec}

The main tool we use to prove Lemma~\ref{regularity_lem}
is the~concentration inequality of Kim--Vu (\cite[Main Theorem]{kim00}), which we now formulate for a~special case of our interest. 

Let $I$ be a finite non-empty set and let $\{X_i:i\in I\}\cup\{Y_i:i\in I\}
\cup\{Z_i:i\in I\}$ be a mutually independent set of $3|I|$ identically 
distributed random variables taking values in the set $\{0,1\}$.
Let $F\subseteq I^3$ be any subset and consider
the random variable 
\begin{equation}\label{V_form_eq}
V=\sum_{(i,j,k)\in F}X_iY_jZ_k.
\end{equation}

Given $i, j, k\in I$, define the following variables, depending on the expression defining $V$, and 
which can be seen as formal partial derivatives of~$V$ with respect to the variables $X_i$, $Y_j$, 
and $Z_k$:

\begin{equation*}
\begin{split}
&V_{X_i}=\sum_{\substack{j',k' \in I:\\ (i,j',k')\in F}} Y_{j'}Z_{k'},\quad
V_{Y_j}=\sum_{\substack{i',k' \in I:\\ (i',j,k')\in F}} X_{i'}Z_{k'},\quad
V_{Z_k}=\sum_{\substack{i',j' \in I:\\ (i',j',k)\in F}} X_{i'}Y_{j'},\\
&V_{Y_jZ_k}=\sum_{\substack{i'\in I:\\ (i',j,k)\in F}} X_{i'}, \quad
V_{X_iZ_k}=\sum_{\substack{j'\in I:\\ (i,j',k)\in F}} Y_{j'}, \quad
V_{X_iY_j}=\sum_{\substack{k'\in I:\\ (i,j,k')\in F}} Z_{k'},
\end{split}
\end{equation*}
and
\begin{equation*}
V_{X_iY_jZ_k}=\begin{cases}
1, & \text{ if }(i,j,k)\in F,\\
0, & \text{ otherwise}.
\end{cases}
\end{equation*}

Denote also the following quantities: 
\begin{equation*}
\begin{split}
E_0(V)&=\mathbb{E}V,\\
E_1(V)&=\max\left(\max_{i\in I}\mathbb{E}V_{X_i},
\max_{j\in I}\mathbb{E}V_{Y_j}, \max_{k\in I}\mathbb{E}V_{Z_k}\right),\\
E_2(V)&=\max\left(\max_{j,k\in I}\mathbb{E}V_{Y_jZ_k},
\max_{i,k\in I}\mathbb{E}V_{X_iZ_k}, \max_{i,j\in I}\mathbb{E}V_{X_iY_j}\right),\\
E_3(V)&=\max_{i,j,k\in I}\mathbb{E}V_{X_iY_jZ_k},\\
E(V)&=\max_{0\leq i\leq 3} E_i(V),\\
E'(V)&=\max_{1\leq i\leq 3}E_i(V).
\end{split}
\end{equation*}

In this notation we have the following inequality.
\begin{thm}[Special case of {\cite[Main Theorem]{kim00}}]
\label{kim_vu_thm}
There exist absolute constants ${a,b>0}$, such that, for any variable
$V$ of form (\ref{V_form_eq}) and any $\lambda>1$,
\begin{equation*}
    \mathbb{P}\left(|V-\mathbb{E}V|>a\left(E(V)E'(V)\right)^{\frac{1}{2}}\lambda^3\right) \leq b\exp\left(-\lambda+2\log (3|I|)\right).
\end{equation*}
\end{thm}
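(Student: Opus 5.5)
The plan is to obtain the statement by specialising the general Kim--Vu inequality \cite[Main Theorem]{kim00} to degree $3$. Recall its form. Let $Y$ be a polynomial of degree $k$ in independent random variables $t_1,\dots,t_N$ taking values in $[0,1]$, with non-negative coefficients. For a multiset $A$ of indices, let $\partial_A Y$ denote the formal partial derivative. Set
\[
E_j(Y)=\max_{|A|=j}\mathbb{E}\,\partial_A Y, \qquad E(Y)=\max_{j\geq 0}E_j(Y), \qquad E'(Y)=\max_{j\geq 1}E_j(Y).
\]
Then there are constants $a_k,b_k>0$, depending only on $k$, such that for every $\lambda>1$
\[
\mathbb{P}\left(|Y-\mathbb{E}Y|>a_k\left(E(Y)E'(Y)\right)^{\frac{1}{2}}\lambda^k\right)\leq b_k\exp\left(-\lambda+(k-1)\log N\right).
\]
We apply this with $k=3$, with the $N=3|I|$ variables $\{X_i\}\cup\{Y_j\}\cup\{Z_k\}$, and with $Y=V$. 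Then $(k-1)\log N=2\log(3|I|)$, and we set $a=a_3$, $b=b_3$; these are absolute constants. The hypotheses hold: the variables are mutually independent and $\{0,1\}$-valued, so $[0,1]$-valued, and $V$ has all coefficients equal to $1$ because $F$ is a set, so no monomial is repeated. The degree of $V$ is at most $3$; if $F=\emptyset$ then $V\equiv 0$ and the claim is trivial.

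The main step is to check that the quantities $E_j$ from \cite{kim00} coincide with $E_0(V),\dots,E_3(V)$ as defined above. Every monomial $X_iY_jZ_k$ is multilinear and uses exactly one variable from each of the three families. Hence $\partial_A V=0$ whenever $A$ contains a repeated index, or two indices from the same family, or has size greater than $3$. The only multisets with possibly non-zero derivative are therefore:
\begin{itemize}
\item the empty set, giving $V$ itself, so $E_0=\mathbb{E}V$;
\item singletons $\{X_i\},\{Y_j\},\{Z_k\}$, giving exactly $V_{X_i},V_{Y_j},V_{Z_k}$;
\item pairs from two distinct families, giving $V_{Y_jZ_k},V_{X_iZ_k},V_{X_iY_j}$;
\item triples with one index from each family, giving the indicator $V_{X_iY_jZ_k}$.
\end{itemize}
Each formal derivative is computed term by term, so it agrees with the corresponding expression in the text. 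Taking maxima of expectations, the general $E_j$ equal our $E_1(V),E_2(V),E_3(V)$, and the contributions for $j\geq 4$ are zero. Since all expectations are non-negative, these zero terms do not affect $E(V)$ or $E'(V)$, so those also agree with ours.

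Substituting these identifications into the general inequality yields exactly the stated bound. The only real obstacle is bookkeeping: one must match the conventions of \cite{kim00}, namely that derivatives are taken over multisets and that $E'$ excludes $j=0$, and confirm that the constants $a_k,b_k$ depend only on $k$, which here is fixed at $3$. The assumption that the variables are identically distributed is not needed.
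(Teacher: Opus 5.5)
Your proposal is correct and matches the paper's argument: both specialise Kim--Vu's Main Theorem to $k=3$, with the $3|I|$ variables $X_i,Y_j,Z_k$ and all weights equal to $1$, and identify $E(V),E'(V)$ with Kim--Vu's quantities. The only difference is cosmetic: the paper phrases this via the weighted hypergraph with edges $\{i,m+j,2m+k\}$, where Kim--Vu index derivatives by vertex subsets rather than multisets, and this is immaterial here since $V$ is multilinear.
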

To see that Theorem~\ref{kim_vu_thm} is in fact a special case
of \cite[Main Theorem]{kim00}, assume w.l.o.g.\ that $I=\{1,\ldots,m\}$
for some $m\geq 1$. Let $n=3m$ and $k=3$. Choose $(t_1,t_2,\ldots,t_n)=(X_1,\ldots,X_m,Y_1,\ldots,Y_m,Z_1,\ldots,Z_m)$ and let $H$ be the~hypergraph
with $V(H)=\{1,2,\ldots,n\}$ and $\mathcal{E}(H)=\{\{i,m+j,2m+k\}:(i,j,k)\in F\}$. Declare the weight $w(e)$ of every edge $e\in \mathcal{E}(H)$ to be equal to $1$. Now, in the notation
of \cite{kim00}, $V=Y_H$, $E(V)=E(H)$ and $E'(V)=E'(H)$, so the inequality in Theorem~\ref{kim_vu_thm} holds for $a=a_3$ and an implicit universal constant $b>0$, implied by the~$O(\cdot)$ 
notation on the right-hand side of \cite[Main Theorem]{kim00}.

\subsection{Proof of asymptotic regularity}

In this subsection we prove Lemma~\ref{regularity_lem} by using Theorem~\ref{kim_vu_thm}.
W.l.o.g.\ assume that $i=1$. For every $r\in W_4(S)$
define the random variables: $X_r=\mathds{1}_{r\in R_1}$, $Y_r=\mathds{1}_{r\in R_2}$, and $Z_r=\mathds{1}_{r\in R_3}$. Consider any $w\in W_2(S)$. Let $F_{w,1}=\{\tau\in T(S):\partial_2^{-1}(\tau)=w\text\}=\{\tau\in T(S):z_3x_2=w^{-1}\text\}$, $F_{w,2}=\{\tau\in T(S):\partial_3(\tau)=w\}=\{\tau\in T(S):x_3y_2=w\}$, and $F_w=F_{w,1}\cup F_{w,2}$ Recall that $\partial_2^{-1}(\tau)\neq\partial_3(\tau)$ for every
$\tau\in T(S)$, so the sets $F_{w,1}$ and $F_{w,2}$ are disjoint. Since $L_1$ is constructed from $\mathcal{T}=(R_1\times R_2\times R_3)\cap T(S)$, by Definition~\ref{def_triagram_link_graph} we have
\begin{equation}\label{deg_form_eq}
    d_{L_1}(w)=|F_w\cap\mathcal{T}|=|F_w\cap (R_1\times R_2\times R_3)|=\sum_{(r_1,r_2,r_3)\in F_w} X_{r_1}Y_{r_2}Z_{r_3}.
\end{equation}
As the sets $R_1, R_2, R_3$ are mutually independent, so is the set of $3|W_4(S)|$ variables of form $X_r$, $Y_r$, or $Z_r$, for $r\in W_4(S)$. Hence the random variable $V=d_{L_1}(w)$ is of form (\ref{V_form_eq}) for $I=W_4(S)$ and $F=F_w$. Aiming to use Theorem~\ref{kim_vu_thm}, let us compute $E(V)$ and $E'(V)$ for every $w\in W_2(S)$.

\begin{lem}\label{E_Eprim_lem}
Let $w\in W_2(S)$. Consider the random variable $V=d_{L_1}(w)$ and its presentation in the form~(\ref{deg_form_eq}). Then $E(V)=\mathbb{E}(d_{L_1}(w))$ and $E'(V)=1$ for $n$ sufficiently large, depending only on $\rho$.
\end{lem}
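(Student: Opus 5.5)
The plan is to compute the four quantities $E_0(V),\dots,E_3(V)$ directly, using the upper bound $|T(S,C)|\leq(2n)^{9-m}$ from Lemma~\ref{counting_triagrams_lem} and the asymptotics $p\sim\frac13(2n)^{\frac{\alpha-7}{3}}$. I will show that $E_3(V)=1$, that $E_1(V),E_2(V)\leq 1$ for large $n$, and that $E_0(V)\to\infty$. Together these give $E'(V)=1$ and $E(V)=E_0(V)=\mathbb{E}(d_{L_1}(w))$.

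\textbf{Step 1: $E_0$ and $E_3$.} For $E_0$, Remark~\ref{degree_rem} gives $E_0(V)=d_0(1-O_\text{abs}(n^{-1}))$. Here
\[
d_0=2(2n)^7p^3\sim \tfrac{2}{27}(2n)^{\alpha},
\]
which tends to infinity. For $E_3$, note that $V_{X_{r_1}Y_{r_2}Z_{r_3}}$ is the indicator of $(r_1,r_2,r_3)\in F_w$. The set $F_w$ is nonempty for $n\geq 9$, as shown in the proof of Remark~\ref{degree_rem}. Hence $E_3(V)=1$.

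\textbf{Step 2: $E_2$.} Fixing two of the three relators leaves a single variable factor of probability $p$, so each second derivative is $p$ times a count of triagrams in $F_w$ with two fixed relators.
\begin{itemize}
\item If $r_2,r_3$ are fixed, then all $y_j,z_j$ and $x_1$ are determined. The condition defining $F_{w,1}$ or $F_{w,2}$ fixes one of $x_2,x_3$, so at most $2n$ choices remain. This gives expectation at most $2np\to0$.
\item If $r_1,r_3$ are fixed, then only $y_2,y_3$ are free, giving expectation at most $(2n)^2p\sim\frac13(2n)^{\frac{\alpha-1}{3}}$.
\item If $r_1,r_2$ are fixed, then only $z_2,z_3$ are free, again giving expectation at most $(2n)^2p\sim\frac13(2n)^{\frac{\alpha-1}{3}}$.
\end{itemize}
Since $\alpha\leq 1$, all three cases are eventually bounded by $\frac12<1$.

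\textbf{Step 3: $E_1$.} Fixing one relator gives $p^2$ times a count, and the count is at most $(2n)^{\#\text{free}}$. I sum the contributions of $F_{w,1}$ and $F_{w,2}$ separately.
\begin{itemize}
\item Fixing $r_1$ determines $x_1,x_2,x_3,y_1$. The $w$-condition then additionally fixes $z_3$ (for $F_{w,1}$) or $y_2$ (for $F_{w,2}$), leaving 4 free variables. This gives at most $2(2n)^4p^2\sim\frac29(2n)^{\frac{2\alpha-2}{3}}\leq\frac29+o(1)$.
\item Fixing $r_2$ determines the $y_j$ and $z_1$. The $w$-condition leaves at most $3$ free variables for $F_{w,1}$ and at most $4$ for $F_{w,2}$. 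This gives at most $(2n)^3p^2+(2n)^4p^2\leq\frac19+o(1)$.
\item Fixing $r_3$ is symmetric to fixing $r_2$: it determines the $z_j$ and $x_1$, and the same bound $\frac19+o(1)$ follows.
\end{itemize}
So $E_1(V)<1$ for large $n$.

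\textbf{Main obstacle.} The delicate point is the boundary case $\alpha=1$ (that is, $d=\frac12$). There the quantities $(2n)^2p$ and $(2n)^4p^2$ do not tend to zero but converge to the constants $\frac13$ and $\frac19$. The argument therefore has to check that the relevant sums of constants stay strictly below $1$, and this is where "for $n$ sufficiently large depending only on $\rho$" enters. The counting itself is routine once I track, in each case, which variables are forced by the fixed relators and which by the condition $z_3x_2=w^{-1}$ or $x_3y_2=w$.
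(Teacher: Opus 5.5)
Your proposal is correct and follows the paper's proof essentially step for step. Both compute $E_0$ via Remark~\ref{degree_rem}, get $E_3=1$ from $F_w\neq\emptyset$, and bound $E_1,E_2$ below $1$ by counting free generators with Lemma~\ref{counting_triagrams_lem>}; your bound $(2n)^2p$ for the $X_rZ_{r'}$ and $X_rY_{r'}$ terms is even slightly sharper than the paper's $2p(2n)^2$. The only slip is harmless: with $r_2,r_3$ fixed, $F_{w,1}$ and $F_{w,2}$ can each contribute up to $2n$ triagrams, so that bound should be $4np$ rather than $2np$, which still tends to $0$.
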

\begin{proof}
We compute $E_0(V)$ and $E_3(V)$, and bound $E_1(V)$ and $E_2(V)$. 

$\boldsymbol{E_0(V)}$. By Remark~\ref{degree_rem}, we have $E_0(V)=\mathbb{E}(d_{L_1}(w))=d_0(1-O_\text{abs}(n^{-1}))=2(2n)^7p^3(1-O_\text{abs}(n^{-1}))\sim\frac{2}{27}(2n)^\alpha$. In particular, $E_0(V)>1$ for $n$ sufficiently large.

$\boldsymbol{E_1(V)}$. We first bound $\max_{r\in W_4}\mathbb{E}V_{X_r}$. 
Fix $r\in W_4(S)$. Since $\mathbb{E}Y_{r_2}Z_{r_3}=\left(\mathbb{E}Y_{r_2}\right)\left(\mathbb{E}Z_{r_3}\right)=p^2$ for any $r_2, r_3\in W_4(S)$, we have
\begin{equation*}
    \mathbb{E}V_{X_r} = \sum_{\substack{r_2,r_3 \in W_4(S):\\ (r,r_2,r_3)\in F_w}} \mathbb{E}Y_{r_2}Z_{r_3}=p^2(m_1+m_2),
\end{equation*}
where, for $1\leq i\leq 2$, $m_i$ is the number of triagrams $\tau=(r_1,r_2,r_3)\in F_{w,i}$, such that $r_1=r$. Let us bound $m_1$. The conditions on $\tau$: $\tau\in F_{w,1}$ and $r_1=r$ are either
contradictory, or together amount to fixing 5 generators defining $\tau$, as shown in
Figure~\ref{bounding_EV_X_fig}. In either case, by Lemma~\ref{counting_triagrams_lem},
$m_1\leq (2n)^{9-5}=(2n)^4$. Analogously, $m_2\leq (2n)^4$.
As these bounds do not depend on the choice of $r$, we have $\max_{r\in W_4(S)}\mathbb{E}V_{X_r}\leq 2p^2(2n)^4$.
\begin{figure}[H]
	\centering
	\begin{tikzpicture}[line cap = round, line join = round]

\def\midarr{0.5}
\def\boundarr{0.5}
\tikzset{boundedgestyle/.style={
        thin,
        decoration={
            markings,
            mark=at position \boundarr with {\arrow{angle 45}}
        },
        postaction={decorate}
}}
\tikzset{midedgestyle/.style={
        thin,
        decoration={
            markings,
            mark=at position \midarr with {\arrow{angle 45}}
        },
        postaction={decorate}
}}
\tikzset{noarrowstyle/.style={
        thick,
        decoration={
            markings
        },
        postaction={decorate}
}}

\def\l{1}

\def\s{1}
\def\t{1.732}
\def\w{0.577}
\def\txt{0.25}

\def\q{0}
\def\p{0}
\def\r{0}

\def\xa{\l*\p}
\def\ya{\l*\q-\l*2*\w*\r}
\draw[midedgestyle, ultra thick] (\xa, \ya) -- (\xa-\l*\t, \ya-\l*\s);
\node at (\xa-\boundarr*\l*\t+\txt*0.5*\l*\s, \ya-\boundarr*\l*\s-\txt*0.5*\t*\l) {$x_1$};
\draw[boundedgestyle, ultra thick] (\xa-\l*\t, \ya-\l*\s) -- (\xa, \ya-2*\l*\s);
\node at (\xa-\l*\t+\boundarr*\l*\t-\txt*0.5*\l*\s, \ya-\l*\s-\boundarr*\l*\s-\txt*0.5*\t*\l) {$x_2$};
\draw[boundedgestyle, ultra thick] (\xa, \ya-2*\l*\s) -- (\xa+\l*\t, \ya-\l*\s);
\node at (\xa+\boundarr*\l*\t+\txt*0.5*\l*\s, \ya-2*\l*\s+\boundarr*\l*\s-\txt*0.5*\t*\l) {$x_3$};

\def\xb{\l*\p+\l*\r}
\def\yb{\l*\q+\l*\w*\r}
\draw[midedgestyle, ultra thick] (\xb, \yb) -- (\xb+\l*\t, \yb-\l*\s);
\node at (\xb+\boundarr*\l*\t+\txt*0.5*\l*\s, \yb-\boundarr*\l*\s+\txt*0.5*\t*\l) {$y_1$};
\draw[boundedgestyle] (\xb+\l*\t, \yb-\l*\s) -- (\xb+\l*\t, \yb+\l*\s);
\node at (\xb+\l*\t+\txt*\l*\s, \yb-\l*\s+\boundarr*2*\l*\s) {$y_2$};
\draw[boundedgestyle] (\xb+\l*\t, \yb+\l*\s) -- (\xb, \yb+2*\l*\s);
\node at (\xb+\l*\t-\boundarr*\l*\t+\txt*0.5*\l*\s, \yb+\l*\s+\boundarr*\l*\s+\txt*0.5*\t*\l) {$y_3$};

\def\xc{\l*\p-\l*\r}
\def\yc{\l*\q+\l*\w*\r}
\draw[midedgestyle] (\xc, \yc) -- (\xc, \yc+2*\l*\s);
\node at (\xc-\txt*\l*\s, \yc+\boundarr*2*\l*\s) {$z_1$};
\draw[boundedgestyle] (\xc, \yc+2*\l*\s) -- (\xc-\l*\t, \yc+\l*\s);
\node at (\xc-\boundarr*\l*\t-\txt*0.5*\l*\s, \yc+2*\l*\s-\boundarr*\l*\s+\txt*0.5*\t*\l) {$z_2$};
\draw[boundedgestyle, ultra thick] (\xc-\l*\t, \yc+\l*\s) -- (\xc-\l*\t, \yc-\l*\s);
\node at (\xc-\l*\t-\txt*\l*\s, \yc+\l*\s-\boundarr*2*\l*\s) {$z_3$};

\def\q{0}
\def\p{5}
\def\r{0}

\def\xa{\l*\p}
\def\ya{\l*\q-\l*2*\w*\r}
\draw[midedgestyle, ultra thick] (\xa, \ya) -- (\xa-\l*\t, \ya-\l*\s);
\node at (\xa-\boundarr*\l*\t+\txt*0.5*\l*\s, \ya-\boundarr*\l*\s-\txt*0.5*\t*\l) {$x_1$};
\draw[boundedgestyle, ultra thick] (\xa-\l*\t, \ya-\l*\s) -- (\xa, \ya-2*\l*\s);
\node at (\xa-\l*\t+\boundarr*\l*\t-\txt*0.5*\l*\s, \ya-\l*\s-\boundarr*\l*\s-\txt*0.5*\t*\l) {$x_2$};
\draw[boundedgestyle, ultra thick] (\xa, \ya-2*\l*\s) -- (\xa+\l*\t, \ya-\l*\s);
\node at (\xa+\boundarr*\l*\t+\txt*0.5*\l*\s, \ya-2*\l*\s+\boundarr*\l*\s-\txt*0.5*\t*\l) {$x_3$};

\def\xb{\l*\p+\l*\r}
\def\yb{\l*\q+\l*\w*\r}
\draw[midedgestyle, ultra thick] (\xb, \yb) -- (\xb+\l*\t, \yb-\l*\s);
\node at (\xb+\boundarr*\l*\t+\txt*0.5*\l*\s, \yb-\boundarr*\l*\s+\txt*0.5*\t*\l) {$y_1$};
\draw[boundedgestyle, ultra thick] (\xb+\l*\t, \yb-\l*\s) -- (\xb+\l*\t, \yb+\l*\s);
\node at (\xb+\l*\t+\txt*\l*\s, \yb-\l*\s+\boundarr*2*\l*\s) {$y_2$};
\draw[boundedgestyle] (\xb+\l*\t, \yb+\l*\s) -- (\xb, \yb+2*\l*\s);
\node at (\xb+\l*\t-\boundarr*\l*\t+\txt*0.5*\l*\s, \yb+\l*\s+\boundarr*\l*\s+\txt*0.5*\t*\l) {$y_3$};

\def\xc{\l*\p-\l*\r}
\def\yc{\l*\q+\l*\w*\r}
\draw[midedgestyle] (\xc, \yc) -- (\xc, \yc+2*\l*\s);
\node at (\xc-\txt*\l*\s, \yc+\boundarr*2*\l*\s) {$z_1$};
\draw[boundedgestyle] (\xc, \yc+2*\l*\s) -- (\xc-\l*\t, \yc+\l*\s);
\node at (\xc-\boundarr*\l*\t-\txt*0.5*\l*\s, \yc+2*\l*\s-\boundarr*\l*\s+\txt*0.5*\t*\l) {$z_2$};
\draw[boundedgestyle] (\xc-\l*\t, \yc+\l*\s) -- (\xc-\l*\t, \yc-\l*\s);
\node at (\xc-\l*\t-\txt*\l*\s, \yc+\l*\s-\boundarr*2*\l*\s) {$z_3$};

\end{tikzpicture}
	\vspace{1em}
	\caption{Bounding $\mathbb{E}V_{X_r}$. 
        On the left, thickened segments represent generators fixed by the conditions $r_1=r$ and $\tau\in F_{w,1}$. On the right, generators fixed by the conditions $r_1=r$ and $\tau\in F_{w,2}$.}
	\label{bounding_EV_X_fig}
\end{figure}

Next we bound $\max_{r\in W_4(S)}\mathbb{E}V_{Y_r}$. Once again, fix $r\in W_4(S)$. We have
\begin{equation*}
    \mathbb{E}V_{Y_r} = \sum_{\substack{r_1,r_3 \in W_4(S):\\ (r_1,r,r_3)\in F_w}} \mathbb{E}X_{r_1}Z_{r_3}=p^2(m_1+m_2),
\end{equation*}
where, for $1\leq i\leq 2$, $m_i$ is the number of triagrams $\tau=
(r_1,r_2,r_3)\in F_{w,i}$, such that $r_2=r$. 
The conditions on $\tau$: $\tau\in F_{w,1}$ and $r_2=r$ (resp.\ $\tau\in F_{w,2}$ and $r_2=r$) are contradictory or fix together 6 (resp.\ 5) generators defining $\tau$, as shown in Figure~\ref{bounding_EV_Y_fig}. Hence, 
by Lemma~\ref{counting_triagrams_lem}, $m_1\leq (2n)^3$ and $m_2\leq (2n)^4$.
As these bounds do not depend on $r$, we get $\max_{r\in W_4(S)}\mathbb{E}V_{Y_r}\leq 2p^2(2n)^4$.
\begin{figure}[H]
	\centering
	\begin{tikzpicture}[line cap = round, line join = round]

\def\midarr{0.5}
\def\boundarr{0.5}
\tikzset{boundedgestyle/.style={
        thin,
        decoration={
            markings,
            mark=at position \boundarr with {\arrow{angle 45}}
        },
        postaction={decorate}
}}
\tikzset{midedgestyle/.style={
        thin,
        decoration={
            markings,
            mark=at position \midarr with {\arrow{angle 45}}
        },
        postaction={decorate}
}}
\tikzset{noarrowstyle/.style={
        thick,
        decoration={
            markings
        },
        postaction={decorate}
}}

\def\l{1}

\def\s{1}
\def\t{1.732}
\def\w{0.577}
\def\txt{0.25}

\def\q{0}
\def\p{0}
\def\r{0}

\def\xa{\l*\p}
\def\ya{\l*\q-\l*2*\w*\r}
\draw[midedgestyle] (\xa, \ya) -- (\xa-\l*\t, \ya-\l*\s);
\node at (\xa-\boundarr*\l*\t+\txt*0.5*\l*\s, \ya-\boundarr*\l*\s-\txt*0.5*\t*\l) {$x_1$};
\draw[boundedgestyle, ultra thick] (\xa-\l*\t, \ya-\l*\s) -- (\xa, \ya-2*\l*\s);
\node at (\xa-\l*\t+\boundarr*\l*\t-\txt*0.5*\l*\s, \ya-\l*\s-\boundarr*\l*\s-\txt*0.5*\t*\l) {$x_2$};
\draw[boundedgestyle] (\xa, \ya-2*\l*\s) -- (\xa+\l*\t, \ya-\l*\s);
\node at (\xa+\boundarr*\l*\t+\txt*0.5*\l*\s, \ya-2*\l*\s+\boundarr*\l*\s-\txt*0.5*\t*\l) {$x_3$};

\def\xb{\l*\p+\l*\r}
\def\yb{\l*\q+\l*\w*\r}
\draw[midedgestyle, ultra thick] (\xb, \yb) -- (\xb+\l*\t, \yb-\l*\s);
\node at (\xb+\boundarr*\l*\t+\txt*0.5*\l*\s, \yb-\boundarr*\l*\s+\txt*0.5*\t*\l) {$y_1$};
\draw[boundedgestyle, ultra thick] (\xb+\l*\t, \yb-\l*\s) -- (\xb+\l*\t, \yb+\l*\s);
\node at (\xb+\l*\t+\txt*\l*\s, \yb-\l*\s+\boundarr*2*\l*\s) {$y_2$};
\draw[boundedgestyle, ultra thick] (\xb+\l*\t, \yb+\l*\s) -- (\xb, \yb+2*\l*\s);
\node at (\xb+\l*\t-\boundarr*\l*\t+\txt*0.5*\l*\s, \yb+\l*\s+\boundarr*\l*\s+\txt*0.5*\t*\l) {$y_3$};

\def\xc{\l*\p-\l*\r}
\def\yc{\l*\q+\l*\w*\r}
\draw[midedgestyle, ultra thick] (\xc, \yc) -- (\xc, \yc+2*\l*\s);
\node at (\xc-\txt*\l*\s, \yc+\boundarr*2*\l*\s) {$z_1$};
\draw[boundedgestyle] (\xc, \yc+2*\l*\s) -- (\xc-\l*\t, \yc+\l*\s);
\node at (\xc-\boundarr*\l*\t-\txt*0.5*\l*\s, \yc+2*\l*\s-\boundarr*\l*\s+\txt*0.5*\t*\l) {$z_2$};
\draw[boundedgestyle, ultra thick] (\xc-\l*\t, \yc+\l*\s) -- (\xc-\l*\t, \yc-\l*\s);
\node at (\xc-\l*\t-\txt*\l*\s, \yc+\l*\s-\boundarr*2*\l*\s) {$z_3$};

\def\q{0}
\def\p{5}
\def\r{0}

\def\xa{\l*\p}
\def\ya{\l*\q-\l*2*\w*\r}
\draw[midedgestyle] (\xa, \ya) -- (\xa-\l*\t, \ya-\l*\s);
\node at (\xa-\boundarr*\l*\t+\txt*0.5*\l*\s, \ya-\boundarr*\l*\s-\txt*0.5*\t*\l) {$x_1$};
\draw[boundedgestyle] (\xa-\l*\t, \ya-\l*\s) -- (\xa, \ya-2*\l*\s);
\node at (\xa-\l*\t+\boundarr*\l*\t-\txt*0.5*\l*\s, \ya-\l*\s-\boundarr*\l*\s-\txt*0.5*\t*\l) {$x_2$};
\draw[boundedgestyle, ultra thick] (\xa, \ya-2*\l*\s) -- (\xa+\l*\t, \ya-\l*\s);
\node at (\xa+\boundarr*\l*\t+\txt*0.5*\l*\s, \ya-2*\l*\s+\boundarr*\l*\s-\txt*0.5*\t*\l) {$x_3$};

\def\xb{\l*\p+\l*\r}
\def\yb{\l*\q+\l*\w*\r}
\draw[midedgestyle, ultra thick] (\xb, \yb) -- (\xb+\l*\t, \yb-\l*\s);
\node at (\xb+\boundarr*\l*\t+\txt*0.5*\l*\s, \yb-\boundarr*\l*\s+\txt*0.5*\t*\l) {$y_1$};
\draw[boundedgestyle, ultra thick] (\xb+\l*\t, \yb-\l*\s) -- (\xb+\l*\t, \yb+\l*\s);
\node at (\xb+\l*\t+\txt*\l*\s, \yb-\l*\s+\boundarr*2*\l*\s) {$y_2$};
\draw[boundedgestyle, ultra thick] (\xb+\l*\t, \yb+\l*\s) -- (\xb, \yb+2*\l*\s);
\node at (\xb+\l*\t-\boundarr*\l*\t+\txt*0.5*\l*\s, \yb+\l*\s+\boundarr*\l*\s+\txt*0.5*\t*\l) {$y_3$};

\def\xc{\l*\p-\l*\r}
\def\yc{\l*\q+\l*\w*\r}
\draw[midedgestyle, ultra thick] (\xc, \yc) -- (\xc, \yc+2*\l*\s);
\node at (\xc-\txt*\l*\s, \yc+\boundarr*2*\l*\s) {$z_1$};
\draw[boundedgestyle] (\xc, \yc+2*\l*\s) -- (\xc-\l*\t, \yc+\l*\s);
\node at (\xc-\boundarr*\l*\t-\txt*0.5*\l*\s, \yc+2*\l*\s-\boundarr*\l*\s+\txt*0.5*\t*\l) {$z_2$};
\draw[boundedgestyle] (\xc-\l*\t, \yc+\l*\s) -- (\xc-\l*\t, \yc-\l*\s);
\node at (\xc-\l*\t-\txt*\l*\s, \yc+\l*\s-\boundarr*2*\l*\s) {$z_3$};

\end{tikzpicture}
	\vspace{1em}
	\caption{Bounding $\mathbb{E}V_{Y_r}$. 
        On the left, generators fixed by the conditions $r_2=r$ and $\tau\in F_{w,1}$. On the right, generators fixed by the conditions $r_2=r$ and $\tau\in F_{w,2}$.}
		\label{bounding_EV_Y_fig}
\end{figure}

Finally, reasoning entirely analogously to how we have bounded 
$\max_{r\in W_4(S)}\mathbb{E}V_{Y_r}$, we obtain
that $\max_{r\in W_4(S)}\mathbb{E}V_{Z_r}\leq 2p^2(2n)^4$.

In summary, since $\alpha\leq 1$, 
\begin{equation*}
\begin{split}
E_1(V)&=\max\left(\max_{r\in W_4(S)}\mathbb{E}V_{X_r},
\max_{r\in W_4(S)}\mathbb{E}V_{Y_r}, \max_{r\in W_4(S)}\mathbb{E}V_{Z_r}\right)\\
&\leq 2p^2(2n)^4=\frac{2}{9}(2n)^{-\frac{2}{3}+\frac{2}{3}\alpha}(1+o_\rho(1))\leq\frac{2}{9}+o_\rho(1),
\end{split}
\end{equation*}
so $E_1(V)<1$ for $n$ sufficiently large.

$\boldsymbol{E_2(V)}$. We first bound $\max_{r,r'\in W_4(S)}\mathbb{E}V_{Y_rZ_{r'}}$. Fix $r, r'\in W_4(S)$. We have

\begin{equation*}
    \mathbb{E}V_{Y_rZ_{r'}}=\sum_{\substack{r_1 \in W_4(S):\\ (r_1,r,r')\in F_w}} \mathbb{E}X_{r_1}=p(m_1+m_2),
\end{equation*}
where $m_i$, for $1\leq i\leq 2$, is the number of triagrams $\tau=(r_1, r_2, r_3)
\in F_{w,i}$, such that $r_2=r$ and $r_3=r'$. Let us first bound $m_1$. The conditions
on $\tau$: $\tau\in F_{w,1}$, $r_2=r$, and $r_3=r'$, are contradictory or 
fix together 8 generators defining $\tau$, as shown in Figure \ref{bounding_EV_YZ_fig}. Hence, by Lemma~\ref{counting_triagrams_lem}, $m_1\leq 2n$. Analogously, $m_2\leq 2n$. 
As these bounds do not depend on $r$ and $r'$, we have $\max_{r,r'\in W_4(S)}\mathbb{E}V_{Y_rZ_{r'}}
\leq 4pn$.
\begin{figure}[H]
	\centering
	\begin{tikzpicture}[line cap = round, line join = round]

\def\midarr{0.5}
\def\boundarr{0.5}
\tikzset{boundedgestyle/.style={
        thin,
        decoration={
            markings,
            mark=at position \boundarr with {\arrow{angle 45}}
        },
        postaction={decorate}
}}
\tikzset{midedgestyle/.style={
        thin,
        decoration={
            markings,
            mark=at position \midarr with {\arrow{angle 45}}
        },
        postaction={decorate}
}}
\tikzset{noarrowstyle/.style={
        thick,
        decoration={
            markings
        },
        postaction={decorate}
}}

\def\l{1}

\def\s{1}
\def\t{1.732}
\def\w{0.577}
\def\txt{0.25}

\def\q{0}
\def\p{0}
\def\r{0}

\def\xa{\l*\p}
\def\ya{\l*\q-\l*2*\w*\r}
\draw[midedgestyle, ultra thick] (\xa, \ya) -- (\xa-\l*\t, \ya-\l*\s);
\node at (\xa-\boundarr*\l*\t+\txt*0.5*\l*\s, \ya-\boundarr*\l*\s-\txt*0.5*\t*\l) {$x_1$};
\draw[boundedgestyle, ultra thick] (\xa-\l*\t, \ya-\l*\s) -- (\xa, \ya-2*\l*\s);
\node at (\xa-\l*\t+\boundarr*\l*\t-\txt*0.5*\l*\s, \ya-\l*\s-\boundarr*\l*\s-\txt*0.5*\t*\l) {$x_2$};
\draw[boundedgestyle] (\xa, \ya-2*\l*\s) -- (\xa+\l*\t, \ya-\l*\s);
\node at (\xa+\boundarr*\l*\t+\txt*0.5*\l*\s, \ya-2*\l*\s+\boundarr*\l*\s-\txt*0.5*\t*\l) {$x_3$};

\def\xb{\l*\p+\l*\r}
\def\yb{\l*\q+\l*\w*\r}
\draw[midedgestyle, ultra thick] (\xb, \yb) -- (\xb+\l*\t, \yb-\l*\s);
\node at (\xb+\boundarr*\l*\t+\txt*0.5*\l*\s, \yb-\boundarr*\l*\s+\txt*0.5*\t*\l) {$y_1$};
\draw[boundedgestyle, ultra thick] (\xb+\l*\t, \yb-\l*\s) -- (\xb+\l*\t, \yb+\l*\s);
\node at (\xb+\l*\t+\txt*\l*\s, \yb-\l*\s+\boundarr*2*\l*\s) {$y_2$};
\draw[boundedgestyle, ultra thick] (\xb+\l*\t, \yb+\l*\s) -- (\xb, \yb+2*\l*\s);
\node at (\xb+\l*\t-\boundarr*\l*\t+\txt*0.5*\l*\s, \yb+\l*\s+\boundarr*\l*\s+\txt*0.5*\t*\l) {$y_3$};

\def\xc{\l*\p-\l*\r}
\def\yc{\l*\q+\l*\w*\r}
\draw[midedgestyle, ultra thick] (\xc, \yc) -- (\xc, \yc+2*\l*\s);
\node at (\xc-\txt*\l*\s, \yc+\boundarr*2*\l*\s) {$z_1$};
\draw[boundedgestyle, ultra thick] (\xc, \yc+2*\l*\s) -- (\xc-\l*\t, \yc+\l*\s);
\node at (\xc-\boundarr*\l*\t-\txt*0.5*\l*\s, \yc+2*\l*\s-\boundarr*\l*\s+\txt*0.5*\t*\l) {$z_2$};
\draw[boundedgestyle, ultra thick] (\xc-\l*\t, \yc+\l*\s) -- (\xc-\l*\t, \yc-\l*\s);
\node at (\xc-\l*\t-\txt*\l*\s, \yc+\l*\s-\boundarr*2*\l*\s) {$z_3$};

\def\q{0}
\def\p{5}
\def\r{0}

\def\xa{\l*\p}
\def\ya{\l*\q-\l*2*\w*\r}
\draw[midedgestyle, ultra thick] (\xa, \ya) -- (\xa-\l*\t, \ya-\l*\s);
\node at (\xa-\boundarr*\l*\t+\txt*0.5*\l*\s, \ya-\boundarr*\l*\s-\txt*0.5*\t*\l) {$x_1$};
\draw[boundedgestyle] (\xa-\l*\t, \ya-\l*\s) -- (\xa, \ya-2*\l*\s);
\node at (\xa-\l*\t+\boundarr*\l*\t-\txt*0.5*\l*\s, \ya-\l*\s-\boundarr*\l*\s-\txt*0.5*\t*\l) {$x_2$};
\draw[boundedgestyle, ultra thick] (\xa, \ya-2*\l*\s) -- (\xa+\l*\t, \ya-\l*\s);
\node at (\xa+\boundarr*\l*\t+\txt*0.5*\l*\s, \ya-2*\l*\s+\boundarr*\l*\s-\txt*0.5*\t*\l) {$x_3$};

\def\xb{\l*\p+\l*\r}
\def\yb{\l*\q+\l*\w*\r}
\draw[midedgestyle, ultra thick] (\xb, \yb) -- (\xb+\l*\t, \yb-\l*\s);
\node at (\xb+\boundarr*\l*\t+\txt*0.5*\l*\s, \yb-\boundarr*\l*\s+\txt*0.5*\t*\l) {$y_1$};
\draw[boundedgestyle, ultra thick] (\xb+\l*\t, \yb-\l*\s) -- (\xb+\l*\t, \yb+\l*\s);
\node at (\xb+\l*\t+\txt*\l*\s, \yb-\l*\s+\boundarr*2*\l*\s) {$y_2$};
\draw[boundedgestyle, ultra thick] (\xb+\l*\t, \yb+\l*\s) -- (\xb, \yb+2*\l*\s);
\node at (\xb+\l*\t-\boundarr*\l*\t+\txt*0.5*\l*\s, \yb+\l*\s+\boundarr*\l*\s+\txt*0.5*\t*\l) {$y_3$};

\def\xc{\l*\p-\l*\r}
\def\yc{\l*\q+\l*\w*\r}
\draw[midedgestyle, ultra thick] (\xc, \yc) -- (\xc, \yc+2*\l*\s);
\node at (\xc-\txt*\l*\s, \yc+\boundarr*2*\l*\s) {$z_1$};
\draw[boundedgestyle, ultra thick] (\xc, \yc+2*\l*\s) -- (\xc-\l*\t, \yc+\l*\s);
\node at (\xc-\boundarr*\l*\t-\txt*0.5*\l*\s, \yc+2*\l*\s-\boundarr*\l*\s+\txt*0.5*\t*\l) {$z_2$};
\draw[boundedgestyle, ultra thick] (\xc-\l*\t, \yc+\l*\s) -- (\xc-\l*\t, \yc-\l*\s);
\node at (\xc-\l*\t-\txt*\l*\s, \yc+\l*\s-\boundarr*2*\l*\s) {$z_3$};

\end{tikzpicture}
	\vspace{1em}
	\caption{Bounding $\mathbb{E}V_{Y_rZ_{r'}}$. 
        On the left, generators fixed by $r_2=r$, $r_3=r'$ and $\tau\in F_{w,1}$. On the right, generators fixed by $r_2=r$, $r_3=r'$ and $\tau\in F_{w,2}$.}
	\label{bounding_EV_YZ_fig}
\end{figure}

Next we bound $\max_{r,r'\in W_4(S)}\mathbb{E}V_{X_rZ_{r'}}$. Fix $r,r'\in W_4(S)$. We have
\begin{equation*}
    \mathbb{E}V_{X_rZ_{r'}}=\sum_{\substack{r_2 \in W_4(S):\\ (r,r_2,r')\in F_w}} \mathbb{E}Y_{r_2}=p(m_1+m_2),
\end{equation*}
where, for $1\leq i\leq 2$, $m_i$ is the number of triagrams $\tau=(r_1,r_2,r_3)\in F_{w,i}$, such that $r_1=r$ and $r_3=r'$. The conditions on $\tau$: $\tau\in F_{w,1},\ r_1=r$, and $r_3=r'$ 
(resp.\ $\tau\in F_{w,2},\ r_1=r$, and $r_3=r'$) are contradictory or together fix 7 (resp.\ 8)
generators defining $\tau$, as shown in Figure~\ref{bounding_EV_XZ_fig}. Hence,
by Lemma~\ref{counting_triagrams_lem}, $m_1\leq (2n)^2$ and $m_2\leq 2n$.
As these bounds do not depend on $r$ and $r'$, we have $\max_{r,r'\in W_4(S)}\mathbb{E}V_{X_rZ_{r'}}\leq 2p(2n)^2$.
\begin{figure}[H]
	\centering
	\begin{tikzpicture}[line cap = round, line join = round]

\def\midarr{0.5}
\def\boundarr{0.5}
\tikzset{boundedgestyle/.style={
        thin,
        decoration={
            markings,
            mark=at position \boundarr with {\arrow{angle 45}}
        },
        postaction={decorate}
}}
\tikzset{midedgestyle/.style={
        thin,
        decoration={
            markings,
            mark=at position \midarr with {\arrow{angle 45}}
        },
        postaction={decorate}
}}
\tikzset{noarrowstyle/.style={
        thick,
        decoration={
            markings
        },
        postaction={decorate}
}}

\def\l{1}

\def\s{1}
\def\t{1.732}
\def\w{0.577}
\def\txt{0.25}

\def\q{0}
\def\p{0}
\def\r{0}

\def\xa{\l*\p}
\def\ya{\l*\q-\l*2*\w*\r}
\draw[midedgestyle, ultra thick] (\xa, \ya) -- (\xa-\l*\t, \ya-\l*\s);
\node at (\xa-\boundarr*\l*\t+\txt*0.5*\l*\s, \ya-\boundarr*\l*\s-\txt*0.5*\t*\l) {$x_1$};
\draw[boundedgestyle, ultra thick] (\xa-\l*\t, \ya-\l*\s) -- (\xa, \ya-2*\l*\s);
\node at (\xa-\l*\t+\boundarr*\l*\t-\txt*0.5*\l*\s, \ya-\l*\s-\boundarr*\l*\s-\txt*0.5*\t*\l) {$x_2$};
\draw[boundedgestyle, ultra thick] (\xa, \ya-2*\l*\s) -- (\xa+\l*\t, \ya-\l*\s);
\node at (\xa+\boundarr*\l*\t+\txt*0.5*\l*\s, \ya-2*\l*\s+\boundarr*\l*\s-\txt*0.5*\t*\l) {$x_3$};

\def\xb{\l*\p+\l*\r}
\def\yb{\l*\q+\l*\w*\r}
\draw[midedgestyle, ultra thick] (\xb, \yb) -- (\xb+\l*\t, \yb-\l*\s);
\node at (\xb+\boundarr*\l*\t+\txt*0.5*\l*\s, \yb-\boundarr*\l*\s+\txt*0.5*\t*\l) {$y_1$};
\draw[boundedgestyle] (\xb+\l*\t, \yb-\l*\s) -- (\xb+\l*\t, \yb+\l*\s);
\node at (\xb+\l*\t+\txt*\l*\s, \yb-\l*\s+\boundarr*2*\l*\s) {$y_2$};
\draw[boundedgestyle] (\xb+\l*\t, \yb+\l*\s) -- (\xb, \yb+2*\l*\s);
\node at (\xb+\l*\t-\boundarr*\l*\t+\txt*0.5*\l*\s, \yb+\l*\s+\boundarr*\l*\s+\txt*0.5*\t*\l) {$y_3$};

\def\xc{\l*\p-\l*\r}
\def\yc{\l*\q+\l*\w*\r}
\draw[midedgestyle, ultra thick] (\xc, \yc) -- (\xc, \yc+2*\l*\s);
\node at (\xc-\txt*\l*\s, \yc+\boundarr*2*\l*\s) {$z_1$};
\draw[boundedgestyle, ultra thick] (\xc, \yc+2*\l*\s) -- (\xc-\l*\t, \yc+\l*\s);
\node at (\xc-\boundarr*\l*\t-\txt*0.5*\l*\s, \yc+2*\l*\s-\boundarr*\l*\s+\txt*0.5*\t*\l) {$z_2$};
\draw[boundedgestyle, ultra thick] (\xc-\l*\t, \yc+\l*\s) -- (\xc-\l*\t, \yc-\l*\s);
\node at (\xc-\l*\t-\txt*\l*\s, \yc+\l*\s-\boundarr*2*\l*\s) {$z_3$};

\def\q{0}
\def\p{5}
\def\r{0}

\def\xa{\l*\p}
\def\ya{\l*\q-\l*2*\w*\r}
\draw[midedgestyle, ultra thick] (\xa, \ya) -- (\xa-\l*\t, \ya-\l*\s);
\node at (\xa-\boundarr*\l*\t+\txt*0.5*\l*\s, \ya-\boundarr*\l*\s-\txt*0.5*\t*\l) {$x_1$};
\draw[boundedgestyle, ultra thick] (\xa-\l*\t, \ya-\l*\s) -- (\xa, \ya-2*\l*\s);
\node at (\xa-\l*\t+\boundarr*\l*\t-\txt*0.5*\l*\s, \ya-\l*\s-\boundarr*\l*\s-\txt*0.5*\t*\l) {$x_2$};
\draw[boundedgestyle, ultra thick] (\xa, \ya-2*\l*\s) -- (\xa+\l*\t, \ya-\l*\s);
\node at (\xa+\boundarr*\l*\t+\txt*0.5*\l*\s, \ya-2*\l*\s+\boundarr*\l*\s-\txt*0.5*\t*\l) {$x_3$};

\def\xb{\l*\p+\l*\r}
\def\yb{\l*\q+\l*\w*\r}
\draw[midedgestyle, ultra thick] (\xb, \yb) -- (\xb+\l*\t, \yb-\l*\s);
\node at (\xb+\boundarr*\l*\t+\txt*0.5*\l*\s, \yb-\boundarr*\l*\s+\txt*0.5*\t*\l) {$y_1$};
\draw[boundedgestyle, ultra thick] (\xb+\l*\t, \yb-\l*\s) -- (\xb+\l*\t, \yb+\l*\s);
\node at (\xb+\l*\t+\txt*\l*\s, \yb-\l*\s+\boundarr*2*\l*\s) {$y_2$};
\draw[boundedgestyle] (\xb+\l*\t, \yb+\l*\s) -- (\xb, \yb+2*\l*\s);
\node at (\xb+\l*\t-\boundarr*\l*\t+\txt*0.5*\l*\s, \yb+\l*\s+\boundarr*\l*\s+\txt*0.5*\t*\l) {$y_3$};

\def\xc{\l*\p-\l*\r}
\def\yc{\l*\q+\l*\w*\r}
\draw[midedgestyle, ultra thick] (\xc, \yc) -- (\xc, \yc+2*\l*\s);
\node at (\xc-\txt*\l*\s, \yc+\boundarr*2*\l*\s) {$z_1$};
\draw[boundedgestyle, ultra thick] (\xc, \yc+2*\l*\s) -- (\xc-\l*\t, \yc+\l*\s);
\node at (\xc-\boundarr*\l*\t-\txt*0.5*\l*\s, \yc+2*\l*\s-\boundarr*\l*\s+\txt*0.5*\t*\l) {$z_2$};
\draw[boundedgestyle, ultra thick] (\xc-\l*\t, \yc+\l*\s) -- (\xc-\l*\t, \yc-\l*\s);
\node at (\xc-\l*\t-\txt*\l*\s, \yc+\l*\s-\boundarr*2*\l*\s) {$z_3$};

\end{tikzpicture}
	\vspace{1em}
	\caption{Bounding $\mathbb{E}V_{X_rZ_{r'}}$. 
        On the left, generators fixed by $r_1=r$, $r_3=r'$ and $\tau\in F_{w,1}$. On the right, generators fixed by $r_1=r$, $r_3=r'$ and $\tau\in F_{w,2}$.}
	\label{bounding_EV_XZ_fig}
\end{figure}

Finally, reasoning entirely symmetrically to how we have bounded $\max_{r,r'\in 
W_4(S)}\mathbb{E}V_{X_rZ_{r'}}$, we obtain that 
$\max_{r,r'\in W_4(S)}\mathbb{E}V_{X_rY_{r'}}\leq 2p(2n)^2$.

In summary, since $\alpha \leq 1$:
\begin{equation*}
\begin{split}
E_2(V)&=\max\left(\max_{r,r'\in W_4(S)}\mathbb{E}V_{Y_rZ_{r'}},
\max_{r,r'\in W_4(S)}\mathbb{E}V_{X_rZ_{r'}}, \max_{r,r'\in W_4(S)}\mathbb{E}V_{X_rY_{r'}}\right)\\
&\leq\max(4pn, 2p(2n)^2)=2p(2n)^2\\
&=\frac{2}{3}(2n)^{-\frac{1}{3}+\frac{\alpha}{3}}(1+o_\rho(1))\leq\frac{2}{3}+o_\rho(1),
\end{split}
\end{equation*}
so $E_2(V)<1$ for $n$ sufficiently large.

$\boldsymbol{E_3(V)}$. Fix $r_1, r_2, r_3\in W_4(S)$. The random
variable $V_{X_{r_1}Y_{r_2}Z_{r_3}}$ is constant, so
\begin{equation*}
\mathbb{E}V_{X_{r_1}Y_{r_2}Z_{r_3}}=V_{X_{r_1}Y_{r_2}Z_{r_3}}=\begin{cases}
1, & \text{ if }(r_1,r_2,r_3)\in F_w,\\
0, & \text{ otherwise.}
\end{cases}
\end{equation*}
As in the proof of Remark~\ref{degree_rem}, since $w\in W_2(S)$, 
so $F_w\neq\emptyset$ for $n\geq 9$.
In particular,
\begin{equation*}
E_3(V)=\max_{r_1,r_2,r_3\in W_4(S)}\mathbb{E}V_{X_{r_1}Y_{r_2}Z_{r_3}}=1.
\end{equation*}

We have thus proved, for $n$ sufficiently large, that $E_0(V)>1$, $E_1(V)<1$, $E_2(V)<1$, and $E_3(V)=1$. Note
that our arguments show in fact that these inequalities hold as soon as $n\geq n_0$ for some $n_0$, depending only on $\rho$.
Hence, for $n\geq n_0$ we have
\begin{equation*}
E(V)=\max_{0\leq i\leq 3}E_i(V)=E_0(V)=\mathbb{E}(d_{L_1}(w))
\end{equation*}
and
\begin{equation*}
E'(V)=\max_{1\leq i\leq 3}E_i(V)=1.
\end{equation*}
\end{proof}

We are now in the position to show Lemma~\ref{regularity_lem}. 

\begin{proof}[Proof of~Theorem~\ref{regularity_lem}]
Let $\delta=n^{-\frac{\alpha}{3}}$ and
\begin{equation*}
\eta=\max_{v\in W_2(S)} \left| \frac{\mathbb{E}\left(d_{L_1}(v)\right)-d_0}{d_0} \right|.
\end{equation*}
By Remark~\ref{degree_rem}, we have $\eta=O_\text{abs}\left(n^{-1}\right)$. (It is not difficult to see that $\mathbb{E}\left(d_{L_1}(v)\right)$
can, for given $n$ and $\rho$, take at most two different values, depending on whether
$v=st$ for $s=t$ or $s\neq t$, where $s,t\in S\cup S^{-1}$. Hence $\eta$
is a~maximum of at most two distinct numbers, but we do not need this fact in our argument.)

Fix $w\in W_2(S)$. We aim to use Theorem~\ref{kim_vu_thm} to bound from above the probability that $|d_{L_1}(w)-d_0|>\delta d_0$.
By definition of $\eta$, we have $|\mathbb{E}\left(d_{L_1}(w)\right)-d_0|\leq \eta d_0$, so if it occurs that $|d_{L_1}(w)-d_0|>\delta d_0$,
then also $|d_{L_1}(w)-\mathbb{E}\left(d_{L_1}(w)\right)|>(\delta-\eta)d_0$. Hence, 
\begin{equation}\label{proof_of_reg_lem_ineq}
\mathbb{P}\Big(|d_{L_1}(w)-d_0|>\delta d_0\Big)
\leq \mathbb{P}\Big(|d_{L_1}(w)-\mathbb{E}\left(d_{L_1}(w)\right)|>(\delta-\eta)d_0\Big).
\end{equation}

Consider the~presentation of $V=d_{L_1}(w)$ in the form (\ref{deg_form_eq}).
By Lemma~\ref{E_Eprim_lem}, there exists $n_0$, depending only on $\rho$,
such that $E(V)=\mathbb{E}\left(d_{L_1}(w)\right)$ and $E'(V)=1$ if $n\geq n_0$.
We can bound $\mathbb{E}(d_{L_1}(w))\leq(1+\eta)d_0$, so if $n\geq n_0$ and $\delta>\eta$, then
\begin{equation}\label{rewriting_eta_lam_ineq}
(\delta-\eta)d_0\geq (\delta-\eta)d_0^{\frac{1}{2}}\left(\frac{\mathbb{E}(d_{L_1}(w))}{1+\eta}\right)^{\frac{1}{2}}=a(E(V)E'(V))^{\frac{1}{2}}\lambda^3,
\end{equation}
where $a$ is the~constant from Theorem~\ref{kim_vu_thm} and we denote
\begin{equation*}
\lambda = \frac{d_0^{\frac{1}{6}}(\delta-\eta)^{\frac{1}{3}}}{a^{\frac{1}{3}}(1+\eta)^\frac{1}{6}}.
\end{equation*}
Since $\alpha\leq 1$, we have $\delta-\eta \sim n^{-\frac{\alpha}{3}}$.
Additionally $d_0=2(2n)^7p^3\sim \frac{2}{27}(2n)^\alpha$, so
\begin{equation}\label{lambda_as_sim}
\lambda \sim \frac{\left(\frac{2}{27}(2n)^\alpha\right)^{\frac{1}{6}}
\left(n^{-\frac{\alpha}{3}}\right)^\frac{1}{3}}{a^{\frac{1}{3}}}
\sim C_\alpha n^{\frac{\alpha}{18}},
\end{equation}
where $C_\alpha>0$ is a constant, depending only on $\alpha$. 
Hence, there exists $n_1\geq n_0$, depending only on $\rho$, 
such that $\delta>\eta$ and $\lambda>1$ for $n\geq n_1$.

By combining (\ref{proof_of_reg_lem_ineq}), (\ref{rewriting_eta_lam_ineq}), and 
using Theorem~\ref{kim_vu_thm}, we obtain the following inequalities:

\begin{equation*}
\begin{split}
\mathbb{P}\Big(|d_{L_1}(w)-d_0|>\delta d_0\Big)
&\leq \mathbb{P}\Big(|V-\mathbb{E}V|>(\delta-\eta)d_0\Big)\\
&\leq \mathbb{P}\Big(|V-\mathbb{E}V|>a(E(V)E'(V))^\frac{1}{2}\lambda^3\Big)\\
&\leq b\exp\left(-\lambda+2\log(3|W_4|)\right),
\end{split}
\end{equation*}
which hold for every $w\in W_2(S)$ and $n\geq n_1$. Recall that $\mathcal{R}_{1,\delta}$ is the event that $|d_{L_1}(w)-d_0|\leq\delta d_0$ for every $w\in W_2(S)$, so
\begin{equation*}
\mathbb{P}\left(\mathcal{R}_{1,\delta}^c\right)\leq \sum_{w\in W_2} \mathbb{P}\Big(|d_{L_1}(w)-d_0|>\delta d_0\Big)\leq b|W_2(S)|\exp\left(-\lambda+2\log(3|W_4(S)|)\right)
\end{equation*}
for $n\geq n_1$. Using (\ref{lambda_as_sim}) and the inequalities $|W_2(S)|\leq (2n)^2$, $|W_4(S)|\leq(2n)^4$, we obtain further that
\begin{equation*}
\begin{split}
\mathbb{P}\left(\mathcal{R}_{1,\delta}^c\right)
&\leq b(2n)^2\exp\left(-C_\alpha n^{\frac{\alpha}{18}}(1+o_\rho(1)) +2\log\left(3(2n)^4\right)\right)\\
&= \exp\left(-C_\alpha n^{\frac{\alpha}{18}}(1+o_\rho(1)) + D\log{n}+E\right)\\
&\leq \exp\left(-C'_\alpha n^{\frac{\alpha}{18}}\right)
\end{split}
\end{equation*}
for some absolute constants $D,E$, a~constant $C'_\alpha>0$, depending only on $\alpha$,
and $n$ sufficiently large, depending only on $\rho$. In particular, for every $\beta>0$, we have $\mathbb{P}\left(\mathcal{R}_{1,\delta}\right)=1-o_{\beta, \rho}\left(n^{-\beta}\right)$.

\end{proof}

\section{Bounding the conformal dimension}\label{bound_conf_dim_sec}

In this section we prove Theorem~\ref{confdim_bounds_square_thm}.
The proof we present essentially repeats the~arguments given in \cite[Subsection 10.2]{dru19} for random 
triangular groups. Our contribution here is showing property $(\mathrm{F}L^p)$ for a~random group in the square model, for $p$ increasing with $n$ (Theorem~\ref{main_thm_FLp}).
We will also use the fact that a~random group in the square model is a.a.s.\ non-elementary hyperbolic at any density $d<\frac{1}{2}$.
We separately prove lower and upper bounds of~Theorem~\ref{confdim_bounds_square_thm} in Subsections~\ref{confdim_lower_bound_subsec}
and \ref{confdim_upper_bound_subsec}, respectively.

\subsection{Lower bound}\label{confdim_lower_bound_subsec}

We want to show that, for $d\in\left(\frac{5}{12}, \frac{1}{2}\right)$, a~random group $\Gamma$ in the model $\mathcal{Q}\left(n,d\right)$ a.a.s.\ satisfies
\begin{equation*}
	f_d(\log{n})^{\frac{1}{2}}\leq\operatorname{Confdim}(\partial\Gamma),
\end{equation*}
where $f_d>0$ is a~constant, depending only on $d$. 
If $f_d$ is chosen so that is satisfies Theorem~\ref{main_thm_FLp}, 
then the inequality is an immediate consequence of the following 
theorem, which is a~corollary to \cite[th\'eor\`eme 0.1]{bou16}.
\begin{thm}\label{conformal_FLp_lower_bound_thm}
	If $\Gamma$ is a~non-elementary hyperbolic group with property $(\mathrm{F}L^p)$,
	then \newline ${\operatorname{Confdim(\partial\Gamma)}\geq p}$.
\end{thm}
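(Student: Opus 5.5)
The plan is to deduce this from Bourdon's theorem \cite[th\'eor\`eme 0.1]{bou16} together with Pansu's description of conformal dimension via $\ell^p$-cohomology. Bourdon proves that for a non-elementary hyperbolic group $\Gamma$ and any $p>\operatorname{Confdim}(\partial\Gamma)$, there exists a proper (indeed, metrically proper) affine isometric action of $\Gamma$ on an $\ell^p$-space. More precisely, the action lives on a space of functions on pairs of boundary points, which is an $L^p$--space over a $\sigma$-finite measure space, so it is a real $L^p$--space in our sense.

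First I would fix $p>\operatorname{Confdim}(\partial\Gamma)$. Note that $\operatorname{Confdim}(\partial\Gamma)$ is finite, since visual metrics are Ahlfors regular of finite dimension. I would then invoke \cite[th\'eor\`eme 0.1]{bou16} to obtain an affine isometric action of $\Gamma$ on $\ell^p(Y)$, for some countable set $Y$, with unbounded orbits. Because $\Gamma$ is non-elementary, it is infinite, and properness of the action then forces orbits to be unbounded.

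An action with unbounded orbits has no global fixed point, since a fixed point would give a bounded orbit. Hence $\Gamma$ fails $(\mathrm{F}L^p)$ for every $p>\operatorname{Confdim}(\partial\Gamma)$.

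Contrapositively, if $\Gamma$ has $(\mathrm{F}L^p)$, then $p\leq\operatorname{Confdim}(\partial\Gamma)$, which gives the claim.

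One subtlety is that property $(\mathrm{F}L^p)$ does not obviously pass to smaller exponents. This is not actually needed, because we only use the statement at the single exponent $p$ itself. If the cited theorem were instead phrased with $p\geq$ rather than $>$, the conclusion would only improve.

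The main point to verify carefully is the exact form of Bourdon's statement. It must give a fixed-point-free action for every $p$ strictly above the conformal dimension, rather than only for $p$ above some related exponent such as the Ahlfors-regular conformal dimension. For hyperbolic groups, the boundary conformal dimension in the paper's sense agrees with the attained-Ahlfors-regular version via \cite{mac10}, so this should match. Otherwise, the argument is a direct citation.
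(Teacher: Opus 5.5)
Your proposal is correct and follows the paper's proof: the statement is obtained by contraposition from Bourdon's th\'eor\`eme~0.1, which gives a proper affine isometric action on an $\ell^p$--space for every $p$ above the conformal dimension. The only real issue is the one you flag, namely that Bourdon's $\operatorname{Confdim}$ is the Ahlfors-regular conformal dimension. The paper identifies it with the conformal dimension defined here via Mackay--Tyson's Proposition~2.2.6, checking that proposition's uniform perfectness hypothesis with Coornaert's measure on $\partial\Gamma$; this is the Ahlfors regularity of visual metrics that you already invoke.
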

Note that, in the original notation of \cite{bou16}, $\operatorname{Confdim}\left(\partial\Gamma\right)$
refers actually to the so-called \emph{Ahlfors regular conformal dimension} of $\partial\Gamma$, which in general 
can be larger than the~conformal dimension we defined in Subsection~\ref
{intro_confdim_subsec}. However, these definitions coincide when $\Gamma$ is a~finitely-generated non-elementary 
hyperbolic group. This follows from \cite[Proposition~2.2.6]{mac10}, where the~required uniform perfectness assumption
can be straighforwardly justified by using an~appropriate measure on $\partial\Gamma$, which is described 
by \cite[th\'eor\`eme~5.4, corollaire~5.5 and proposition~7.4]{coo93}.

\subsection{Upper bound}\label{confdim_upper_bound_subsec}
In this subsection we will assume only that $d\in\left(0,\frac{1}{2}\right)$ and prove that
a~random group in the model $\mathcal{Q}(n,d)$ a.a.s.\ satisfies
\begin{equation}\label{confdim_lower_bound_ineq}
\operatorname{Confdim}\left(\partial\Gamma\right)\leq F_d\log{n},
\end{equation}
where $F_d>0$ is a~constant depending only on $d$. 

We first recall that random groups in the square model satisfy the 
following linear isoperimetric inequality.
\begin{thm}[{\cite[Theorem 3.14]{odr16a}}]\label{isoperim_square_thm}
	Let $d\in\left(0,\frac{1}{2}\right)$ and $\eps>0$. A.a.s.\ all reduced van Kampen diagrams $D$ associated to the~presentation of a~random group in the~square model at density $d$, satisfy
	\begin{equation}\label{isoperim_square_eq}
	|\partial D| \geq 4(1-2d-\eps)|D|.
	\end{equation}
\end{thm}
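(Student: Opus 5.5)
The plan is a first-moment count over bounded-size diagrams, followed by a local-to-global step. This follows Ollivier's approach for the Gromov density model, adapted to relators of length~4.

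\textbf{Step 1: reduction to bounded size.} Fix $\eps>0$. By the local-to-global principle for linear isoperimetric inequalities (Gromov's Cartan--Hadamard type theorem, in the form used by Ollivier and Papasoglu), it suffices to do the following. Find $K=K(d,\eps)$ such that a.a.s.\ every reduced van Kampen diagram $D$ with $|D|\leq K$ satisfies $|\partial D|\geq 4(1-2d-\eps/2)|D|$. The constant then degrades from $\eps/2$ to $\eps$ for all diagrams, provided $K$ is chosen large in terms of $\eps$. Since the relators have bounded length, $K$ depends only on $d$ and $\eps$, not on $n$.

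\textbf{Step 2: abstract diagrams and the first-moment bound.} For $|D|\leq K$ I will pass to \emph{abstract} (decorated) diagrams. These are planar square complexes of bounded size, in which each face is labelled by an index $j\in\{1,\ldots,m\}$ saying which distinct relator fills it, together with a starting edge and an orientation. There are $O_K(1)$ such objects. For a fixed abstract diagram with $m$ distinct relators, I bound the expected number of tuples $(r_1,\ldots,r_m)\in R^m$ fulfilling it.

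In the simplest case, where each relator appears once, the count goes as follows. Each edge of $D$ receives a letter, so there are at most $(2n)^{e}$ labellings, where $e=(4|D|+|\partial D|)/2$. Each labelling determines the $m=|D|$ relator words, and each lies in $R$ with probability about $(2n)^{4(d-1)}$. The expectation is therefore at most
\begin{equation*}
(2n)^{2|D|+\frac{1}{2}|\partial D|+4(d-1)|D|}=(2n)^{\frac{1}{2}\left(|\partial D|-4(1-2d)|D|\right)}.
\end{equation*}
This tends to $0$ whenever $|\partial D|<4(1-2d-\eps/2)|D|$. A union bound over the $O_K(1)$ abstract diagrams then finishes the bounded case.

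\textbf{Step 3: repeated relators (main obstacle).} The difficulty is that in general a relator occupies several faces, so the $m$ words are not independent across faces. I would follow Ollivier's ordering argument. Order the distinct relators $1,\ldots,m$, and for each $j$ let $\delta_j$ be the number of edges of faces bearing $j$ that are not shared with faces bearing an index $<j$. Conditioning on $r_1,\ldots,r_{j-1}$, the probability that some relator in $R$ is compatible with the constraints already imposed on the faces bearing $j$ is at most about $(2n)^{4d-(4-\delta_j')}$. Summing and using reducedness (no two adjacent faces bear the same relator with opposite orientation) gives an inequality of the form
\begin{equation*}
|\partial D|\geq \sum_j\left(4-2\cdot\#\{\text{internal edges of }j\text{ counted once}\}\right)\cdots,
\end{equation*}
which again yields $|\partial D|\geq 4(1-2d-\eps/2)|D|$ on an event of probability $1-o(1)$.

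The delicate point is the bookkeeping of edges shared between two faces carrying the same relator. Reducedness is exactly what prevents such an edge from being ``free'', so I would check that case carefully. That is where I expect most of the work.
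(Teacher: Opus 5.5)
The paper does not prove this statement; it imports it from \cite[Theorem~3.14]{odr16a}, whose argument follows Ollivier's method: a first-moment bound over decorated abstract diagrams with at most $K$ faces, combined with a local-to-global principle. Your Steps~1--2 reproduce that scheme correctly, including the exponent $\frac{1}{2}\left(|\partial D|-4(1-2d)|D|\right)$ when all faces carry distinct relators.

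The gap is Step~3, which is the real content, and as sketched it does not work.
\begin{itemize}
\item Your $\delta_j$ counts edges over all $m_j$ faces bearing $j$, so it can be as large as $4m_j$. But the single word $r_j$ can absorb at most $4$ constraints. The gain must instead be measured by the number $\delta_j\in\{0,\dots,4\}$ of letter positions of $r_j$ that are forced, either by letters of $r_1,\dots,r_{j-1}$ or by earlier positions of $r_j$ itself.
\item More seriously, multiplying the conditional bounds over all $j$ only controls $\sum_j\delta_j$. The number of internal edges, which is at least $\frac{1}{2}(4|D|-|\partial D|)$, is only bounded by $\sum_j m_j\delta_j$. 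A relator used $m_j$ times with $\delta_j$ forced letters can account for $m_j\delta_j$ internal edges while costing only a factor $(2n)^{-\delta_j}$. So once relators repeat, the full product cannot be compared with $|\partial D|$.
\item Reducedness does not mean that adjacent faces never bear the same relator with opposite orientations. It only excludes the mirror-image configuration across the shared edge.
\end{itemize}

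The missing idea is Ollivier's ordering by multiplicity, together with the use of all partial products.
\begin{itemize}
\item Number the distinct relators so that $m_1\ge m_2\ge\dots\ge m_J$, and set $m_{J+1}=0$.
\item For every $i$, the expected number of tuples of distinct relators in $R$ fulfilling the faces bearing $1,\dots,i$ is at most $\prod_{j\le i}(2n)^{4d-\delta_j}(1+o(1))$.
\item Charge each internal edge to the (face, position) incidence on the side with the larger relator index, or, if both sides bear $r_j$, on the side with the later position of $r_j$. This map is injective and lands among the $m_j\delta_j$ incidences (face bearing $j$, forced position of $r_j$).
\item The rule is undefined only when two adjacent faces bear $r_j$ at the same position. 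With equal orientations this forces a letter to equal its inverse, which is impossible. With opposite orientations it is a cancelling pair, which reducedness excludes.
\end{itemize}
Hence $\frac{1}{2}(4|D|-|\partial D|)\le\sum_j m_j\delta_j=\sum_i(m_i-m_{i+1})\sum_{j\le i}\delta_j$. There are two cases.
\begin{itemize}
\item If $\sum_{j\le i}\delta_j\le 4di+\eta$ for all $i$, the right-hand side is at most $4d|D|+\eta m_1\le(4d+\eta)|D|$. This gives $|\partial D|\ge 4(1-2d)|D|-2\eta|D|$.
\item Otherwise some partial product is at most $(2n)^{-\eta}(1+o(1))$, and the union bound over the $O_K(1)$ decorated diagrams finishes the bounded case.
\end{itemize}
Without this ordering-plus-partial-products step, your Step~3 does not yield the inequality.
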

This inequality, by \cite[Proposition~15]{oll07}, implies a~quantitative version of \cite[Corollary 3.15]{odr16a}:

\begin{lem}\label{hyperbolicity_lem}
	For $d\in \left(0,\frac{1}{2}\right)$, a random group $\Gamma = \lle S|R\rre$ in the square model at density $d$ is a.a.s.\ $\delta$-hyperbolic w.r.t.\ $S$ for
	$\delta=\frac{17}{1-2d}$.
\end{lem}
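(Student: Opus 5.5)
We derive Lemma~\ref{hyperbolicity_lem} by combining Theorem~\ref{isoperim_square_thm} with the quantitative criterion \cite[Proposition~15]{oll07}. That proposition states, roughly: suppose every reduced van Kampen diagram $D$ of a presentation with relators of length at most $\ell$ satisfies $|\partial D|\geq C|D|\ell$ with $C>0$, at least for diagrams up to a sufficiently large size. Then the Cayley graph is $\delta$-hyperbolic with $\delta$ bounded by an explicit multiple of $\ell/C$. The plan is therefore to check the normalisation and to track the constants.

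First, we fix $d\in\left(0,\frac{1}{2}\right)$ and choose $\eps>0$ small in terms of $d$, say $\eps=\frac{1-2d}{100}$ or any value making the final arithmetic work. Theorem~\ref{isoperim_square_thm} gives, a.a.s., that every reduced van Kampen diagram $D$ satisfies $|\partial D|\geq 4(1-2d-\eps)|D|$. All relators have length $\ell=4$, so this is exactly Ollivier's hypothesis in the form $|\partial D|\geq C\ell|D|$ with $C=1-2d-\eps$. Since Theorem~\ref{isoperim_square_thm} covers all diagrams, no local-to-global step is needed here. If \cite[Proposition~15]{oll07} requires non-reduced diagrams to be handled as well, we pass to a reduced diagram with the same boundary word; this does not increase area.

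Second, we plug these values into Ollivier's conclusion. If the proposition yields a thinness constant of the form $\delta\leq \frac{c\,\ell}{C}$, we get $\delta\leq \frac{4c}{1-2d-\eps}$. With $\eps$ chosen as a small fraction of $1-2d$, this is at most $\frac{17}{1-2d}$, provided $4c$ is strictly less than $17$ with some room; for instance $4c\leq 16$ would leave room for a factor $\frac{17}{16}$ loss from $\eps$. The hyperbolicity is with respect to the generating set $S$, because the Cayley complex and the van Kampen diagrams are taken over the presentation $\lle S|R\rre$.

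The main obstacle is the precise form of the constant in \cite[Proposition~15]{oll07}: whether $\delta$ is expressed through $\ell/C$ or $\ell/C^2$, and whether it bounds the thin-triangle or the Rips constant. If the dependence is worse than linear in $1/C$, the target $\frac{17}{1-2d}$ would fail for $d$ close to $\frac12$. I would therefore first read off the exact statement, then tune $\eps$ (for example $\eps=\frac{1-2d}{17}$) so that the inequality $\frac{4c}{1-2d-\eps}\leq\frac{17}{1-2d}$ holds.
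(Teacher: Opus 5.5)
Your proposal is correct and follows the paper's proof: feed Theorem~\ref{isoperim_square_thm}, rewritten with $\mathcal{A}(D)=4|D|$, into \cite[Proposition~15]{oll07} with $\lambda=4$ and $C=1-2d-\eps$. That proposition gives the linear dependence $\delta=4\lambda/C=\frac{16}{1-2d-\eps}$, as you anticipated, so your choice $\eps=\frac{1-2d}{17}$ (or any smaller $\eps$) yields $\delta\leq\frac{17}{1-2d}$.
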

\begin{proof}
	Let $\eps \in (0, 1-2d)$.
	By rewriting $4|D|=\mathcal{A}(D)$ in Theorem~\ref{isoperim_square_thm}, we see that a.a.s.\ a~random presentation $\lle S|R\rre$ in the square model at density $d$ satisfies the assumptions of \cite[Proposition~15]{oll07} for $C=1-2d-\eps$ and $\lambda=4$, so the group $\Gamma = \lle S|R\rre$ is $\delta$-hyperbolic w.r.t.~$S$ for $\delta=\frac{16}{1-2d-\eps}$. Choosing sufficiently small $\eps$, we can obtain $\delta \leq \frac{17}{1-2d}$.
\end{proof}

Now, using Lemma \ref{hyperbolicity_lem}, we can retrace the proof of~\cite[Proposition~1.7]{mac12} to prove (\ref{confdim_lower_bound_ineq})
for $F_d=\frac{200}{1-2d}$ (see also the proof of \cite[Proposition 10.6]{dru19}).
\begin{lem}
Suppose $d\in\left(0,\frac{1}{2}\right)$. A.a.s.\ a random group $\Gamma$ in the model $\mathcal{Q}(n,d)$ satisfies

\begin{equation*}
\operatorname{Confdim}(\partial\Gamma)\leq \frac{200}{1-2d}\log{n}.
\end{equation*}
\end{lem}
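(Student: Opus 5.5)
We will follow Mackay's argument \cite[Proposition~1.7]{mac12} and bound the conformal dimension by the Hausdorff dimension of $\partial\Gamma$ in a suitable visual metric. The general fact we use is this. If $\Gamma$ is $\delta$-hyperbolic with respect to a finite generating set $S$, then for a visual parameter $\epsilon$ of order $1/\delta$ (say $\epsilon=\frac{c}{\delta}$ for a small absolute constant $c$) there is a visual metric on $\partial\Gamma$, comparable to $e^{-\epsilon(\xi|\eta)}$. The Hausdorff dimension of $\partial\Gamma$ in this metric is at most $h/\epsilon$, where $h$ is the volume entropy
\begin{equation*}
h=\limsup_{R\to\infty}\frac{1}{R}\log|B_S(R)|.
\end{equation*}
Since $\operatorname{Confdim}$ is an infimum over quasi-symmetrically equivalent metrics and visual metrics lie in this class, we get $\operatorname{Confdim}(\partial\Gamma)\leq h/\epsilon$.

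The first step is to bound the entropy. The group $\Gamma$ is a quotient of the free group on $n$ generators, so $|B_S(R)|\leq (2n)^R$ and $h\leq\log(2n)$. This is deterministic.

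The second step is to control $\delta$. By Lemma~\ref{hyperbolicity_lem}, a.a.s.\ $\Gamma$ is $\delta$-hyperbolic with respect to $S$ with $\delta=\frac{17}{1-2d}$, which is independent of $n$. So the visual parameter can be taken as $\epsilon=\frac{c(1-2d)}{17}$, and we obtain
\begin{equation*}
\operatorname{Confdim}(\partial\Gamma)\leq\frac{17\log(2n)}{c(1-2d)}.
\end{equation*}
With explicit constants (for instance, the standard choice of $\epsilon$ satisfying $e^{2\epsilon\delta}\leq\sqrt2$ for the existence of visual metrics) and $\log(2n)\leq 2\log n$ for $n\geq2$, this gives a bound $\frac{200}{1-2d}\log n$ once the constant is checked.

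The third step is the covering argument behind the dimension bound. For each $R$, cover $\partial\Gamma$ by shadows of the balls of radius $O(\delta)$ around points of the sphere $S(R)$. There are at most $(2n)^R$ of them, and each has visual diameter $\lesssim e^{-\epsilon R}$. Hence the $s$-dimensional Hausdorff content is at most $(2n)^R e^{-\epsilon sR}\cdot O(1)$, which tends to $0$ as $R\to\infty$ when $s>\log(2n)/\epsilon$.

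The main obstacle is this third step, specifically bookkeeping the constants: relating Gromov products to shadows, and checking that the diameter bound $e^{-\epsilon(R-O(\delta))}$ only contributes a multiplicative constant independent of $n$. I would import these estimates directly from the proof of \cite[Proposition~1.7]{mac12}, as done in \cite[Proposition~10.6]{dru19}. Only the inputs change: our $\delta$ comes from Lemma~\ref{hyperbolicity_lem}, and the entropy bound $\log(2n)$ holds for any presentation with $n$ generators.
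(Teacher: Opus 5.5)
Your proposal is correct and is essentially the paper's proof. Both take $\delta=\frac{17}{1-2d}$ a.a.s.\ from Lemma~\ref{hyperbolicity_lem} and use a visual metric with exponent $\eps=\frac{\log 2}{4\delta}=\frac{(1-2d)\log 2}{68}\geq\frac{1-2d}{100}$, which is exactly your condition $e^{2\eps\delta}\leq\sqrt{2}$ at equality; then Hausdorff dimension $h/\eps$ with entropy $h\leq\log(2n-1)$ gives $\frac{100}{1-2d}\log(2n-1)\leq\frac{200}{1-2d}\log n$. The only differences are that the paper cites Coornaert's th\'eor\`eme~5.4 for the dimension formula instead of your covering sketch, and that the constant bookkeeping you flag as the main obstacle is harmless anyway, since multiplicative constants never affect Hausdorff dimension and only the $n$-independent lower bound on $\eps$ matters.
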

\begin{proof}
Let $\delta = \frac{17}{1-2d}$. By Lemma~\ref{hyperbolicity_lem} and using \cite[III.H.3.21]{bri99}, 
$\Gamma$ is a.a.s.\ $\delta$-hyperbolic and the~boundary $\partial\Gamma$ admits a~visual metric with visual 
exponent $\eps = \frac{\log{2}}{4\delta} = \frac{(1-2d)\log{2}}{68}\geq \frac{1-2d}{100}$. 
Hausdorff dimension of $\partial\Gamma$ with this metric is equal to
$\frac{1}{\eps} h(\Gamma)$, where $h(\Gamma)$ is the volume
entropy of $\Gamma$ \cite[th\'eor\`eme 5.4]{coo93}. Since $\Gamma$ has $n$ generators,
$h(\Gamma)\leq \log{(2n-1)}$, and
\begin{equation*}
\operatorname{Confdim}(\partial\Gamma)\leq \frac{1}{\eps}h(\Gamma) \leq \frac{100}{1-2d}\log{(2n-1)}
\leq \frac{200}{1-2d}\log{n},
\end{equation*}
where we used the fact that $2n-1\leq n^2$ for $n\geq 1$.
\end{proof}

\end{document}